\documentclass[a4paper,11pt,reqno]{amsart} 
\usepackage[utf8]{inputenc}
\usepackage{enumerate}
\usepackage{color}
\usepackage[dvipsnames]{xcolor}
\usepackage{bm,bbm}
\usepackage{cancel}
\usepackage{enumitem}
\usepackage{physics}
\usepackage{amsmath,euscript}
\usepackage{amsthm}
\usepackage{amssymb}
\usepackage{graphicx}
\usepackage{mathrsfs}
\usepackage{epsf}
\usepackage{xcolor}
\usepackage{psfrag}
\usepackage{enumerate}
\usepackage{amsfonts,amssymb,amsthm,amsmath}
\usepackage{hyperref}
\usepackage[numbers]{natbib}
\usepackage{enumitem} 
\usepackage{xspace}
\usepackage{soul}

\theoremstyle{theorem}
\newtheorem{thm}{Theorem}[section]

\newtheorem{lem}[thm]{Lemma}

\theoremstyle{definition}

\theoremstyle{remark}
\usepackage{appendix}

\numberwithin{equation}{section}

\theoremstyle{remark}

\theoremstyle{definition}

\numberwithin{thm}{section}
\numberwithin{equation}{section}

\setstcolor{red}

\newcommand{\inte}{\int_{\mathbb{R}^3}}
\newcommand{\R}{\mathbb{R}}
\newcommand{\C}{\mathbb{C}}

\newcommand{\cE}{\mathcal{E}}

\newcommand{\cK}{\mathcal{K}}

\newcommand{\cP}{\mathcal{P}}         

\newcommand{\cR}{\mathcal{R}}

\newcommand{\cX}{\mathcal{X}}



\newcommand{\fS}{\mathfrak{S}}



\newcommand{\one}{1}


\renewcommand{\Re}{\mathrm{Re}} 
\renewcommand{\Im}{\mathrm{Im}} 



\newcommand{\Ex}{X}
\newcommand{\DETAILS}[1]{}

\newcommand{\nn}{\nonumber}


\allowdisplaybreaks

\title[Mass-Critical Neutron Stars in HF and HFB theories]{Mass-Critical Neutron Stars in the Hartree--Fock and Hartree--Fock--Bogoliubov Theories}

\author[B. Chen]{Bin Chen}
\address{(Bin Chen) School of Mathematics and Statistics,  Key Laboratory of Nonlinear Analysis $\&$ Applications (Ministry of Education), Central China Normal University, Wuhan 430079, P. R. China; and Academy of Mathematics and Systems Science, Chinese Academy of Sciences, Beijing 100190, P. R. China
}
\email{binchen@amss.ac.cn}

\author[Y. Guo]{Yujin Guo}
\address{ (Yujin Guo) School of Mathematics and Statistics,  Key Laboratory of Nonlinear Analysis $\&$ Applications (Ministry of Education), Central China Normal University, Wuhan 430079, P. R. China}
\email{yguo@ccnu.edu.cn}

\author[P.T. Nam]{Phan Th\`anh Nam}
\address{(Phan Th\`anh Nam) Department of Mathematics, LMU Munich, Theresienstrasse 39, 80333 Munich, Germany}
\email{nam@math.lmu.de}

\author[D.H. Ou Yang]{Dong Hao Ou Yang}
\address{(Dong Hao Ou Yang) Department of Mathematics, LMU Munich, Theresienstrasse 39, 80333 Munich, Germany}
\email{ouyang@math.lmu.de}

\begin{document}

\begin{abstract} We investigate the ground states of neutron stars and white dwarfs in the Hartree-Fock (HF) and Hartree-Fock-Bogoliubov (HFB) theories. It is known that the system is stable below a critical mass, which depends on the gravitational constant, while it  becomes unstable if the total mass exceeds the critical mass. We prove that if the total mass is at the critical mass, then the HFB minimizers do not exist for any gravitational constant, while the HF minimizers exist for every gravitational constant except for a countable set, which is fully characterized by the Gagliardo-Nirenberg inequality for orthonormal systems. Our results complement the existence results in the sub-critical mass case established in [E. Lenzmann and M. Lewin, Duke Math. J., 2010].
\end{abstract}

\date{\today}

\maketitle


\section{Introduction}\label{sec:intro}

From the first principles of quantum mechanics, the ground-state energy   for  a large system of $N$ (spinless) fermions in $\mathbb{R}^3$ is typically 
described by a linear variational problem in $L^2(\mathbb{R}^{3N})$, whose complexity however grows dramatically as $N \to 
\infty$. Therefore, in practice, the complicated many-body quantum theory is usually replaced by simplified models that are 
nonlinear but depend on a small number of variables. In this context, the {\em Hartree--Fock} (HF) and {\em 
Hartree--Fock--Bogoliubov} (HFB) theories are popular approximate models for investigating the macroscopic behavior of 
interacting fermionic systems. In the present paper, we are interested in the ground states of neutron stars and white dwarfs 
within the HF and HFB theories, focusing on their existence and nonexistence at the critical mass with respect to  the 
gravitational collapse of the system.

In the mathematical description of neutron stars and white dwarfs, the kinetic energy is described by the pseudo-relativistic 
operator $ \sqrt{-\Delta + m^2} - m$, where the constant $m>0$ denotes the rest mass of each particle,  and the interactions 
among the particles are described through the Newtonian potential  $-\kappa |x|^{-1}$, where the constant $\kappa > 0$ is 
proportional to the gravitational   constant. Typically, $\kappa>0$ is equal to the product of $m^2$ and the gravitational 
constant, but here we  treat $m>0$ and $\kappa>0$ as two independent parameters. Therefore, the Hamiltonian of the system is
\begin{equation}\label{h}
H_{N}=\sum_{j=1}^{N}\Big(\sqrt{-\Delta_{x_{j}}+m^{2}}-m\Big)-\kappa\sum_{1\leq j<k\leq N}\frac{1}{|x_{j}-x_k|}
\end{equation}
acting on the anti-symmetric space $\bigwedge^NL^{2}(\R^3,\C^q)$, where $q\ge 1$ denotes the spin of the particles.

A remarkable phenomenon in the above system is that the kinetic operator is comparable to the interaction potential,
 leading to the existence of a {\em critical mass} for gravitational collapse:  for given $m>0$ and $\kappa>0$, the system is stable, meaning that the energy is finite, if the total number of particles is strictly below a constant $N_*>0$, while it becomes unstable, meaning that the energy is $-\infty$, if the total 
number of particles exceeds $N_*$. The value of the critical mass was predicted by Chandrasekhar in 1931  (cf. 
\cite{Chandrasekhar-31}): in standard units, the {\em Chandrasekhar limit} is about 1.4 times the mass of the Sun. In 
\cite{LieYau-87}, Lieb and Yau rigorously proved that the critical mass is independent of $m$ and satisfies $N_*\sim 
(\tau_c/\kappa)^{3/2}$ as $\kappa\to 0$, where $\tau_c\approx 2.677$ is a universal constant defined from a Thomas--Fermi type 
variational problem. This improves earlier results of Lieb and Thirring \cite{LieThi-84}. We refer to the textbook 
\cite[Chapter 8]{LieSei-10} for a pedagogical discussion on the stability and instability of relativistic matter.

While the investigation in \cite{LieYau-87} focuses on comparing the leading order of the many-body ground-state energy with 
that of the Thomas-Fermi type energy, the further information about the ground states is expected to be observed in more 
refined approximations. In the HF theory, we take into account Pauli's exclusion principle for fermions by restricting the 
considerations to {\em Slater determinants}, a subclass of wave functions in $L^2(\mathbb{R}^{3N})$ that contain the least 
correlation between particles under the anti-symmetry condition. In this case, the energy functional can be described purely in 
terms of the {\em one-body density matrix} $\gamma$, $i.e.,$ a rank-$N$ projection on $L^2(\mathbb{R}^3)$. The HF energy 
functional $\cE^{\rm HF}_{m,\kappa}(\gamma)$ is hence defined by
\begin{equation}\label{functional-intro}
\begin{split}
\cE^{\rm HF}_{m,\kappa}(\gamma):=& \Tr\big((\sqrt{-\Delta+m^{2}}-m) \gamma\big) \\
&-\frac{\kappa}{2}\int_{\R^{3}}\int_{\R^{3}}\frac{\rho_{\gamma}(x)\rho_{\gamma}(y)-|\gamma(x,y)|^{2}}{|x-y|}dxdy,
\end{split}
\end{equation}
where $\gamma(x,y)$ is the integral kernel of $\gamma$ and $\rho_\gamma(x)=\gamma(x,x)$ is its one-body density. Here 
(\ref{functional-intro}) contains the full kinetic energy of the system, but its interaction energy is simplified greatly and 
contains only the {\em direct} and {\em exchange} terms. The HF approximation was originally proposed to determine the 
configurations of electrons in atoms and molecules, see,  Lieb and Simon \cite{LieSim-77} for a study in this direction, but 
later widely adopted in many different contexts.

When the attractive forces among quantum particles become significant, e.g., in gravitational physics as well as in 
superconducting materials, the HF theory admits a refined version, $i.e.,$ the HFB theory. In this framework, the one-body 
density matrix is required to satisfy $0 \leq \gamma \leq 1$ (not necessarily a projection), and allow  for the emergence of 
``Cooper pairing", which is modeled by an operator $\alpha$ on $L^2(\mathbb{R}^3)$ satisfying
\begin{align}\label{red-matrix-cond}
\begin{pmatrix}
0	&	0\\
0	&	0
\end{pmatrix}\leq \begin{pmatrix}
\gamma	&	\alpha\\
\alpha^{*}	&	1-\bar{\gamma}
\end{pmatrix}\leq\begin{pmatrix}
1	&	0\\
0	&	1
\end{pmatrix}\quad\text{on }\, L^{2}(\R^{3};\C^{q})\oplus L^{2}(\R^{3};\C^{q}).
\end{align}
The HFB energy functional of neutron stars is then given by
\begin{align}\label{HFB-energy-intro}
\cE_{m,\kappa}^{\rm HFB}(\gamma,\alpha)= \cE^{\rm HF}_{m,\kappa}(\gamma)  
-\frac{\kappa}{2}\int_{\R^{3}}\int_{\R^{3}}\frac{|\alpha(x,y)|^{2}}{|x-y|}dxdy,
\end{align}
where $\cE^{\rm HF}_{m,\kappa}(\gamma)$ and $\kappa>0$ are the same as those of (\ref{functional-intro}).
The extra {\em pairing} term given by $\alpha$  leads to various new features. The operators $(\gamma, \alpha)$ describe a 
larger class of quantum states, called {\em quasi-free states}. We refer to \cite{BacLieSol-94} for a comprehensive 
introduction to the HFB theory.

For the HF and HFB theories of neutron stars and white dwarfs, the critical mass also exists and agrees with the many-body 
description to the leading order. The existence of ground states {\em below the critical mass} was established successfully in 
the important work of Lenzmann and Lewin \cite{LenLew-10}  by using and developing the concentration-compactness method of 
Lions \cite{Lions1}. The existence problem in these effective theories, especially in the HFB theory, is conceptually 
challenging since the energy functional is {\em not} weakly lower semicontinuous, due to the attractive interaction among 
particles. Additionally, since the system is translation-invariant and its kinetic energy is nonlocal, there appear other 
technical challenges in the analysis of the system.

The aim of the present paper is to investigate the existence and nonexistence of ground states for neutron stars and white 
dwarfs at the {\em critical mass}. As noted in \cite{LieYau-87,LenLew-10}, the critical mass depends on $\kappa > 0$, but it is 
independent of $m$. It turns out that understanding the situation in the {\em massless case} $m = 0$ is of crucial importance. 
In this direction, we shall first prove the existence of ground states at the critical mass for $m = 0$ in both the HF and HFB 
theories, and then  deduce the existence or nonexistence for $m > 0$.

For the existence result with $m=0$, the key challenge  in the critical case is a lack of coercivity,	as already remarked in 
\cite[Remark 8]{LenLew-10}. In the context of the concentration-compactness method \cite{Lions1}, this requires new ideas to 
rule out the {\em dichotomy} scenarios. In the HFB theory, we shall prove the following {\em existence-nonexistence 
alternative}: the existence for the variational problem with $m=0$ implies the nonexistence for the variational problem with 
$m>0$ at the same mass. This observation follows from a simple scaling property for $m=0$, but it is useful extremely.  In 
particular, applying the existence result for the sub-critical mass case with $m>0$ (cf. \cite{LenLew-10}), we can rule out the 
dichotomy scenario at the critical mass with $m=0$, which thus leads to the existence of a minimizer.
However, as for the HF theory at the critical mass, we combine the insights from the HFB theory with an induction argument to 
handle the nonlinear constraint $\gamma = \gamma^*$. In this regard, besides the method of \cite{LenLew-10}, which follows the 
same spirit as Friesecke \cite{Friesecke-03} in the context of standard atomic Hartree-Fock and MCSCF theories, we also study a 
relaxed problem inspired by the recent work of Frank, Gontier, and Lewin \cite{ii,FGL} concerning  finite-rank Lieb--Thirring 
inequalities. As we shall see, the existence of HF ground states for $m = 0$ leads to a Gagliardo-Nirenberg-type inequality of 
orthonormal systems, which plays a key role in the variational problem for $m>0$.

The implication from the existence for $m = 0$ to the existence or nonexistence for $m > 0$ is very different in the HF and HFB 
theories. On the one hand, we find that the HFB minimizers at the critical mass do not exist for all choices of $m > 0$ and 
$\kappa > 0$. On the other hand, the HF minimizers at the critical mass always exist for every $\kappa > 0$, except for a 
countable set characterized by the many-body Gagliardo–Nirenberg inequality. We interpret the enhanced binding ability of the 
HF theory as a consequence of the strong nonlinear constraint $\gamma = \gamma^2$.




\medskip

Now we state the main results of the present paper, and explain the main ingredients of the proofs. We  start with  the HFB theory, whose mathematical setting is more demanding. We then turn to the HF theory, which is simpler but also has  independent interest.

\subsection{Hartree--Fock--Bogoliubov theory}

To formulate the HFB variational problem, we introduce the (real) Banach space of density matrices
\begin{align}\label{X}
\cX&=\big\{(\gamma,\alpha)\in\fS_{1}\times\fS_{2}:\, 
\gamma^{*}=\gamma,\,\alpha^{T}=-\alpha,\,\|(\gamma,\alpha)\|_{\cX}<\infty\big\},
\end{align}
equipped with the norm
\begin{align}\label{X-norm} 
\|(\gamma,\alpha)\|_{\cX}&:=\|(\one-\Delta)^{1/4}\gamma(\one-\Delta)^{1/4}\|_{\fS_{1}}+\|(\one-\Delta)^{1/4}\alpha\|_{\fS_{2}},
\end{align}
where we denote by $\fS_{1}$ and $\fS_{2}$ the spaces of trace-class and Hilbert-Schmidt operators on $L^{2}(\R^{3};\C)$, 
respectively.  Here we ignore the spin of the particles, as it plays no significant role in the analysis. Of course, all 
results in this paper remain valid for any fixed spin $q$.

The space $\cX$ consists of the (generalized) one-body density matrices with finite kinetic energy. Due to the fermionic 
property of the particles, we have to assume the supplementary condition \eqref{red-matrix-cond}. This leads to the following 
variational space
\begin{align*}\label{K-lamb}
\cK&:=\Big\{(\gamma,\alpha)\in\cX:\,\ \begin{pmatrix}
0	&	0\\
0	&	0
\end{pmatrix}\leq \begin{pmatrix}
\gamma	&	\alpha\\
\alpha^{*}	&	1-\bar{\gamma}
\end{pmatrix}\leq\begin{pmatrix}
1	&	0\\
0	&	1
\end{pmatrix}, \,\ \Tr \gamma <\infty \Big\}.
\end{align*}
Note that each pair $(\gamma,\alpha)\in \cK$ represents the (generalized) one-body density matrix of a {\em quasi-free state} $\Gamma$ in the 
fermionic Fock space, with the particle number expectation $\Tr \gamma<\infty$. These quasi-free states can be obtained by 
rotating the original vacuum via a Bogoliubov transformation, and they can be interpreted as a generalized class of the 
standard Slater determinants used in the HF theory. We refer to \cite{BacLieSol-94} for a comprehensive introduction to the HFB 
theory.

In the following, we  denote by $\gamma(x,y)$ and $\alpha(x,y)$ the integral kernels of $\gamma$ and $\alpha$, respectively. In 
particular, $\rho_{\gamma}(x):=\gamma(x,x)$ is called the one-body particle density, which satisfies
$$
\int_{\R^3} \rho_{\gamma}(x) d x = \Tr \gamma.
$$
We also denote
\begin{equation*}\label{1.4a}
D(\rho_{\gamma},\rho_{\gamma}):=\int_{\R^{3}}\int_{\R^{3}}\frac{\rho_{\gamma}(x)\rho_{\gamma}(y)}{|x-y|}dxdy\ \ \text{and}\ 
\  \Ex(\gamma):=\int_{\R^{3}}\int_{\R^{3}}\frac{|\gamma(x,y)|^{2}}{|x-y|}dxdy.
\end{equation*}
Note that for $(\gamma,\alpha)\in\cK$, we have $\big\langle\sqrt{\rho_{\gamma}},\sqrt{-\Delta}\sqrt{\rho_{\gamma}}\big\rangle
\leq \Tr(\smash{\sqrt{-\Delta}\gamma})$ (see  \cite[Lemma 2.1]{LenLew-10}) and
\begin{align}
\label{inter-bdd1}&\Ex(\gamma) \le D(\rho_{\gamma},\rho_{\gamma})\lesssim\|\sqrt{\rho_{\gamma}}\|_{L^{12/5}}^{4},\\
\label{inter-bdd3}&\Ex(\alpha) \leq \frac{\pi}{2}\Tr(\smash{\sqrt{-\Delta}\alpha\alpha^{*}})\leq 
\frac{\pi}{2}\Tr(\smash{\sqrt{-\Delta}\gamma}),
\end{align}
where the upper bound of \eqref{inter-bdd1} follows from the Hardy--Littlewood--Sobolev inequality, and the lower bound of 
\eqref{inter-bdd3} is a consequence of the following Hardy--Kato inequality
\begin{align}\label{eq:HK}
|x|^{-1}\le \frac{\pi}{2}\sqrt{-\Delta}\  \text{ on }\, L^2(\R^3).
\end{align}
For given $m\ge 0$ and $\kappa>0$, the HFB energy functional of neutron stars and white dwarfs is given by
\begin{equation}\label{HFB-energy}
\begin{split}\cE_{m,\kappa}^{\rm HFB}(\gamma,\alpha)=&\Tr\big( (\sqrt{-\Delta+m^{2}}-m ) \gamma\big)-\frac{\kappa}{2} 
D(\rho_{\gamma},\rho_{\gamma})\\
& +\frac{\kappa} 2 \Ex(\gamma) -\frac \kappa 2 \Ex(\alpha).
\end{split}\end{equation}
One can note from \eqref{inter-bdd1} and \eqref{inter-bdd3} that each term of the energy functional $\cE_{m,\kappa}^{\rm 
HFB}(\gamma,\alpha)$ is finite if $(\gamma,\alpha)\in\cK$. The corresponding minimization problem for a given total mass 
$\lambda>0$ is then given by
\begin{align}\label{eq:Imk-def}
I_{m,\kappa}^{\rm HFB}(\lambda) &= \inf\big\{\cE_{m,\kappa}^{\rm HFB}(\gamma,\alpha):\, (\gamma,\alpha)\in \cK,\ \Tr 
\gamma = \lambda \big\}.
\end{align}

As discussed in \cite[Proposition 2.1]{LenLew-10}, we have the existence of the critical mass ($i.e,$ the Chandrasekhar limit):  
for $\kappa \in (0,4/\pi)$, there exists a unique number $\lambda^{\rm HFB}_\kappa \in (0,\infty)$, which is independent of 
$m\ge 0$, such that
\begin{equation}\label{eq:Chandrasekhar-limit}
I_{m,\kappa}^{\rm HFB}(\lambda) 
\begin{cases} >-\infty, \ &\text{if}\ \ 0<\lambda\leq\lambda^{\rm HFB}_\kappa;\\
= -\infty , \ & \text{if}\ \ \lambda>\lambda^{\rm HFB}_\kappa.
\end{cases}
\end{equation}
Here the upper bound $\kappa<4/\pi$ essentially follows from the Hardy--Kato inequality \eqref{eq:HK}, and this requirement is 
natural since  $\lambda^{\rm HFB}_\kappa=0$ holds for $\kappa>4/\pi$, see \cite[Remark 3]{LenLew-10}. Moreover, $\lambda^{\rm 
HFB}_\kappa$ is nonincreasing and continuous with respect to $\kappa$, and
\begin{equation}\label{eq:lambdaHFB}\lambda^{\rm HFB}_\kappa\sim (\tau_c/\kappa)^{3/2} \ \ \text{ as } \ \kappa\to 0,
\end{equation}
where the universal constant $\tau_c>0$ satisfies
\begin{equation}\label{eq:def-tauc}
\tau_c= \inf \Big\{\frac{2 \int_{\R^3} f(x)^{4/3} dx \Big( \int_{\R^3} f(x) dx \Big)^{2/3} }{D(f,f)}:\,  0\le f \in 
L^{4/3}(\R^3) \cap L^1(\R^3)  \Big\}.
\end{equation}
It is known from \cite{LieOxf-81} that the variational problem  \eqref{eq:def-tauc} has a minimizer, which is unique up to 
scaling, dilation and translation, and $\tau_c>0$ numerically satisfies $\tau_c\approx  2.677$, see \cite{LieYau-87}.

When $m>0$ and $0<\lambda<\lambda^{\rm HFB}_\kappa$, the existence of ground states for $I_{m,\kappa}^{\rm HFB}(\lambda)$ was 
established by Lenzmann and Lewin \cite{LenLew-10}. More precisely, it was proved in \cite[Theorem 1]{LenLew-10} that the 
following strict binding inequality holds
\begin{align*}
I_{m,\kappa}^{\rm HFB}(\lambda) < I_{m,\kappa}^{\rm HFB}(\lambda') + I_{m,\kappa}^{\rm HFB}(\lambda - \lambda'),\ \ \  \forall 
\ 0<\lambda'<\lambda<\lambda^{\rm HFB}_\kappa,
\end{align*}
which also implies  the relative compactness in $\cX$ of any minimizing sequence $\{(\gamma_{n},\alpha_{n})\}_{n\in\mathbb{N}}$ 
for $I_{m,\kappa}^{\rm HFB}(\lambda)$ by applying the concentration-compactness method. On the other hand, as explained by the 
authors in \cite[Remark 8]{LenLew-10}, they could not prove the existence of minimizers for $\lambda=\lambda^{\rm HFB}_\kappa$, 
due to a lack of coercivity, although the energy remains bounded from below. They further predicted that if $m>0$, then 
minimizers do not exist at the critical mass. Our first main result provides a justification for this conjecture.

\begin{thm}[HFB mimimizers at the critical mass]\label{thm:HFB-non-exist}
For $\kappa\in (0,4/\pi)$, suppose $I_{m,\kappa}^{\rm HFB}(\lambda) $ is defined in \eqref{eq:Imk-def}, and let 
$\lambda^{\rm HFB}_\kappa$ be the critical mass defined in \eqref{eq:Chandrasekhar-limit}. Then we 
have
\begin{align}\label{eq:Imkappa} I_{m,\kappa}^{\rm HFB}(\lambda^{\rm HFB}_\kappa)=-m\lambda^{\rm HFB}_\kappa,\quad \forall\, 
m\ge 0.
\end{align}
Moreover, $I_{0,\kappa}^{\rm HFB}(\lambda^{\rm HFB}_\kappa)$ has a minimizer, and every minimizing sequence is relatively 
compact in $\cX$. On the other hand, $I_{m,\kappa}^{\rm HFB}(\lambda^{\rm HFB}_\kappa)$ has no minimizers for every $m>0$.
\end{thm}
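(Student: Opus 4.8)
The plan is to reduce everything to the massless problem and to exploit its exact dilation invariance. Write $\lambda_c:=\lambda^{\rm HFB}_\kappa$. Splitting the kinetic symbol as $\sqrt{-\Delta+m^{2}}-m=\sqrt{-\Delta}+(\sqrt{-\Delta+m^{2}}-m-\sqrt{-\Delta})$ gives, for every $(\gamma,\alpha)\in\cK$,
\[
\cE_{m,\kappa}^{\rm HFB}(\gamma,\alpha)=\cE_{0,\kappa}^{\rm HFB}(\gamma,\alpha)+\Tr\big((\sqrt{-\Delta+m^{2}}-m-\sqrt{-\Delta})\gamma\big).
\]
Because the symbol $\sqrt{|p|^{2}+m^{2}}-m-|p|$ lies in $(-m,0]$, the correction term is at least $-m\Tr\gamma$; combined with $\cE_{0,\kappa}^{\rm HFB}(\gamma,\alpha)\ge I_{0,\kappa}^{\rm HFB}(\Tr\gamma)$ this already yields the lower bound $I_{m,\kappa}^{\rm HFB}(\lambda_c)\ge I_{0,\kappa}^{\rm HFB}(\lambda_c)-m\lambda_c$. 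Everything then hinges on computing $I_{0,\kappa}^{\rm HFB}(\lambda_c)$ and on the massless existence problem.

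For $m=0$ I would first note that under the dilation $(U_Lf)(x)=L^{-3/2}f(x/L)$, acting by $\gamma\mapsto U_L\gamma U_L^{*}$ and $\alpha\mapsto U_L\alpha U_L^{*}$, both the constraint $\cK$ and the mass $\Tr\gamma$ are preserved while every term of $\cE_{0,\kappa}^{\rm HFB}$ (the kinetic term, $D(\rho_\gamma,\rho_\gamma)$, $\Ex(\gamma)$, $\Ex(\alpha)$) is homogeneous of degree $-1$, so $\cE_{0,\kappa}^{\rm HFB}(\gamma_L,\alpha_L)=L^{-1}\cE_{0,\kappa}^{\rm HFB}(\gamma,\alpha)$. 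Thus the attainable energies at fixed mass form a scale-invariant subset of $\R$; since $I_{0,\kappa}^{\rm HFB}(\lambda_c)>-\infty$ by \eqref{eq:Chandrasekhar-limit}, this subset cannot meet $(-\infty,0)$, forcing $I_{0,\kappa}^{\rm HFB}(\mu)=0$ for every $0<\mu\le\lambda_c$. In particular the massless minimization is genuinely critical, and any compactness can only hold modulo this dilation symmetry.

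The heart of the argument is the existence of a massless minimizer. I would take a minimizing sequence, dilate it to normalize the scale (e.g.\ fixing the kinetic energy), and run Lions' concentration--compactness method in $\cX$. The normalization rules out vanishing, since vanishing sends $D$, $\Ex(\gamma)$, $\Ex(\alpha)$ to zero and leaves a strictly positive kinetic energy, contradicting $I_{0,\kappa}^{\rm HFB}(\lambda_c)=0$. The real obstacle is dichotomy: because $I_{0,\kappa}^{\rm HFB}$ is identically $0$ in the mass, the usual strict binding inequality degenerates and cannot forbid splitting --- precisely the lack of coercivity flagged in \cite[Remark 8]{LenLew-10}. To break the degeneracy I would combine the subcritical existence of \cite{LenLew-10} with an existence--nonexistence alternative: if $I_{0,\kappa}^{\rm HFB}(\mu)$ had a minimizer for some $\mu<\lambda_c$, then dilating it by $L\to0$ (where the relativistic correction tends to $-m\mu$) would give $I_{m,\kappa}^{\rm HFB}(\mu)\le-m\mu$, hence $=-m\mu$ by the lower bound; but the nonexistence argument below shows this value cannot be attained, contradicting the subcritical minimizer of \cite[Theorem 1]{LenLew-10}. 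Hence no subcritical massless minimizer exists, so any nonvanishing weak limit of the normalized sequence must carry the full mass $\lambda_c$, yielding strong convergence in $\cX$ and a minimizer. The main technical difficulty will be to show that such a subcritical weak limit would indeed be a minimizer despite the failure of weak lower semicontinuity of the attractive direct term.

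With a massless minimizer $(\gamma_0,\alpha_0)$ at hand (so $\cE_{0,\kappa}^{\rm HFB}(\gamma_0,\alpha_0)=0$, $\Tr\gamma_0=\lambda_c$), the claims for $m>0$ are clean. Using $U_L^{*}\sqrt{-\Delta+m^{2}}U_L=L^{-1}\sqrt{-\Delta+L^{2}m^{2}}$, the correction term for the dilate equals $L^{-1}\Tr((\sqrt{-\Delta+L^{2}m^{2}}-\sqrt{-\Delta})\gamma_0)-m\lambda_c$, and since the multiplier $L^{-1}(\sqrt{|p|^{2}+L^{2}m^{2}}-|p|)$ is bounded by $m$ and tends to $0$ as $L\to0$, dominated convergence sends this to $-m\lambda_c$; thus $I_{m,\kappa}^{\rm HFB}(\lambda_c)\le-m\lambda_c$, matching the lower bound and establishing \eqref{eq:Imkappa}. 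Finally, for nonexistence suppose $(\gamma,\alpha)$ minimized $I_{m,\kappa}^{\rm HFB}(\lambda_c)=-m\lambda_c$. Then the two inequalities $\cE_{0,\kappa}^{\rm HFB}(\gamma,\alpha)\ge0$ and $\Tr((\sqrt{-\Delta+m^{2}}-m-\sqrt{-\Delta})\gamma)\ge-m\lambda_c$ must both be equalities, the latter giving $\Tr((\sqrt{-\Delta+m^{2}}-\sqrt{-\Delta})\gamma)=0$. As $\sqrt{-\Delta+m^{2}}-\sqrt{-\Delta}$ has strictly positive symbol, it is a positive injective operator, so this trace condition with $\gamma\ge0$ forces $\gamma=0$, contradicting $\Tr\gamma=\lambda_c>0$. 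Hence no minimizer exists for any $m>0$.
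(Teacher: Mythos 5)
Your proposal follows essentially the same route as the paper: the identity $I_{0,\kappa}^{\rm HFB}(\mu)=0$ from dilation invariance, the lower bound via the splitting $\sqrt{-\Delta+m^2}-m=\sqrt{-\Delta}+B_m-m$, the existence--nonexistence alternative (a massless minimizer at mass $\mu$ forces $I_{m,\kappa}^{\rm HFB}(\mu)=-m\mu$ to be unattained, contradicting the subcritical existence of Lenzmann--Lewin) used to kill dichotomy, and the strict positivity of $B_m$ for nonexistence at $m>0$ --- this is precisely the paper's Lemma \ref{lem:HFB-alternative} and its deployment. The one step you explicitly leave open, namely that the localized weak limit of mass $\lambda^1$ is genuinely a minimizer of $I^{\rm HFB}_{0,\kappa}(\lambda^1)$ despite the failure of weak lower semicontinuity, is resolved in the paper by combining strong local $L^p$ convergence of $\rho_{\gamma_n^{(1)}}$ (so the attractive direct term converges) with Fatou's lemma applied to $\gamma_n^{(1)}-\alpha_n^{(1)}(\alpha_n^{(1)})^*\ge 0$ and to the nonnegative operator $\sqrt{-\Delta_x}-\tfrac{\kappa}{2}|x-y|^{-1}$ from the Hardy--Kato inequality, which controls the pairing term; with that supplied, your argument is complete and matches the paper's.
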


The proof of Theorem \ref{thm:HFB-non-exist} is based on the following key observation.

\begin{lem}[Existence-nonexistence alternative] \label{lem:HFB-alternative} Let $\kappa\in (0,4/\pi)$ and $0< \lambda \le 
\lambda^{\rm HFB}_\kappa$. If $I^{\rm HFB}_{0,\kappa}(\lambda)$ has a minimizer, then $I^{\rm HFB}_{m,\kappa}(\lambda)$ has no 
mimimizer for every $m>0$.
\end{lem}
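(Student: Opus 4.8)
The plan is to exploit the exact scaling invariance of the \emph{massless} functional. For $\ell>0$ let $U_\ell$ be the $L^2(\R^3)$-dilation $(U_\ell f)(x)=\ell^{3/2}f(\ell x)$ and set $\gamma_\ell=U_\ell\gamma U_\ell^*$, $\alpha_\ell=U_\ell\alpha U_\ell^*$, whose kernels are $\gamma_\ell(x,y)=\ell^3\gamma(\ell x,\ell y)$ and $\alpha_\ell(x,y)=\ell^3\alpha(\ell x,\ell y)$. Since $U_\ell$ is real and unitary, conjugation by $\mathrm{diag}(U_\ell,U_\ell)$ preserves the operator inequality \eqref{red-matrix-cond}, the antisymmetry $\alpha^{T}=-\alpha$, and the trace $\Tr\gamma=\lambda$; hence $(\gamma_\ell,\alpha_\ell)\in\cK$ whenever $(\gamma,\alpha)\in\cK$. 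A change of variables (equivalently, $U_\ell^*\sqrt{-\Delta}\,U_\ell=\ell\sqrt{-\Delta}$) shows that every term of $\cE^{\rm HFB}_{0,\kappa}$ — the massless kinetic energy $\Tr(\sqrt{-\Delta}\gamma)$, the direct term $D(\rho_\gamma,\rho_\gamma)$, and the exchange terms $\Ex(\gamma)$, $\Ex(\alpha)$ — is homogeneous of degree one in $\ell$, so that
\begin{equation*}
\cE^{\rm HFB}_{0,\kappa}(\gamma_\ell,\alpha_\ell)=\ell\,\cE^{\rm HFB}_{0,\kappa}(\gamma,\alpha),\qquad \Tr\gamma_\ell=\lambda .
\end{equation*}
Sending $\ell\to\infty$ and using $I^{\rm HFB}_{0,\kappa}(\lambda)>-\infty$ (valid since $\lambda\le\lambda^{\rm HFB}_\kappa$) forces $\cE^{\rm HFB}_{0,\kappa}(\gamma,\alpha)\ge0$ for all admissible pairs, while $\ell\to0$ gives $I^{\rm HFB}_{0,\kappa}(\lambda)\le0$; hence $I^{\rm HFB}_{0,\kappa}(\lambda)=0$. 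In particular the assumed minimiser $(\gamma_0,\alpha_0)$ satisfies $\cE^{\rm HFB}_{0,\kappa}(\gamma_0,\alpha_0)=0$, and so does every dilate $((\gamma_0)_\ell,(\alpha_0)_\ell)$.

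Next I would compare the two functionals. Since only the kinetic term depends on $m$ and $\sqrt{-\Delta+m^2}-m=\sqrt{-\Delta}+A_m-m$, where $A_m:=\sqrt{-\Delta+m^2}-\sqrt{-\Delta}$ is the bounded, strictly positive Fourier multiplier with symbol $a_m(\xi)=m^2(\sqrt{|\xi|^2+m^2}+|\xi|)^{-1}\in(0,m]$, one obtains the exact identity
\begin{equation*}
\cE^{\rm HFB}_{m,\kappa}(\gamma,\alpha)=\cE^{\rm HFB}_{0,\kappa}(\gamma,\alpha)+\Tr(A_m\gamma)-m\,\Tr\gamma .
\end{equation*}
On the constraint $\Tr\gamma=\lambda$ and using $\cE^{\rm HFB}_{0,\kappa}(\gamma,\alpha)\ge0$ from the first step together with $\Tr(A_m\gamma)>0$ for every nonzero $\gamma\ge0$ (the symbol $a_m$ has trivial kernel), this yields the strict bound
\begin{equation*}
\cE^{\rm HFB}_{m,\kappa}(\gamma,\alpha)>-m\lambda\qquad\text{for all }(\gamma,\alpha)\in\cK,\ \Tr\gamma=\lambda ,
\end{equation*}
so that the value $-m\lambda$ can never be attained.

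It remains to show that the infimum equals $-m\lambda$, which I would extract from the dilated minimiser. Writing $\Tr\big(A_m(\gamma_0)_\ell\big)=\Tr\big(U_\ell^*A_mU_\ell\,\gamma_0\big)=\int_{\R^3}a_m(\ell\xi)\,\widehat{\gamma_0}(\xi,\xi)\,d\xi$, where $\widehat{\gamma_0}(\xi,\xi)$ is the momentum density of $\gamma_0$ with $\int_{\R^3}\widehat{\gamma_0}(\xi,\xi)\,d\xi=\Tr\gamma_0=\lambda$, the integrand is dominated by $m\,\widehat{\gamma_0}(\xi,\xi)\in L^1$ and tends to $0$ pointwise as $\ell\to\infty$; dominated convergence gives $\Tr\big(A_m(\gamma_0)_\ell\big)\to0$. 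Therefore
\begin{equation*}
\cE^{\rm HFB}_{m,\kappa}\big((\gamma_0)_\ell,(\alpha_0)_\ell\big)=\Tr\big(A_m(\gamma_0)_\ell\big)-m\lambda\xrightarrow[\ell\to\infty]{}-m\lambda ,
\end{equation*}
whence $I^{\rm HFB}_{m,\kappa}(\lambda)\le-m\lambda$. Combined with the strict lower bound above, $I^{\rm HFB}_{m,\kappa}(\lambda)=-m\lambda$ is never attained, so $I^{\rm HFB}_{m,\kappa}(\lambda)$ has no minimiser for any $m>0$.

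The main obstacles I anticipate are bookkeeping rather than conceptual. First, one must verify carefully that the dilation leaves $\cK$ invariant, i.e. that the operator inequality \eqref{red-matrix-cond}, the relation $\alpha^{T}=-\alpha$, and the finiteness of $\|(\gamma,\alpha)\|_{\cX}$ are all preserved under $\mathrm{diag}(U_\ell,U_\ell)$. Second, one must justify the momentum-space representation of $\Tr(A_m(\gamma_0)_\ell)$ and the dominated-convergence passage for a general trace-class $\gamma_0\ge0$ of finite kinetic energy. Both reduce to the unitarity of $U_\ell$ and the uniform bound $0<A_m\le m$, but they must be checked to make the strict inequality and the limit fully rigorous.
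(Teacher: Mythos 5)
Your proposal is correct and follows essentially the same route as the paper: exact degree-one scaling of the massless functional to get $I^{\rm HFB}_{0,\kappa}(\lambda)=0$, the decomposition $\sqrt{-\Delta+m^2}-m=\sqrt{-\Delta}+A_m-m$ with $A_m>0$ of trivial kernel to obtain the strict bound $\cE^{\rm HFB}_{m,\kappa}>-m\lambda$, and dominated convergence on the dilated minimizer to show the infimum equals $-m\lambda$ (your $A_m$ is the paper's $B_m$, and your $U_\ell^*A_mU_\ell$ is its $B_{m,n}$). The only cosmetic difference is that you rederive $I^{\rm HFB}_{0,\kappa}(\lambda)=0$ from scaling, whereas the paper cites it from Lenzmann--Lewin.
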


The proof of Lemma \ref{lem:HFB-alternative} relies only on the following scaling property: if $(\gamma,\alpha)\in \cK$, then 
\begin{align}\label{scale-E0}
\cE_{0,\kappa}^{\rm HFB}(\gamma_{\beta},\alpha_{\beta}) &= \beta \cE_{0,\kappa}^{\rm HFB}(\gamma,\alpha),\,\ 
(\gamma_\beta,\alpha_\beta)\in \cK,\,\  \Tr \gamma_\beta=\Tr \gamma,
\end{align}
where $\gamma_{\beta}(x,y): = \beta^3 \gamma(\beta x, \beta y)$ and $\alpha_{\beta}(x,y) := \beta^3 \alpha(\beta x, \beta y)$ for 
$\beta > 0$. As already remarked in  \cite[Appendix B]{LenLew-10}, we deduce from \eqref{scale-E0} that $I_{0,\kappa}^{\rm HFB}(\lambda) =0$ for $0< \lambda \le 
\lambda^{\rm HFB}_\kappa$. We shall show that this identity can be extended to all $m>0$, $i.e.,$ $I_{m,\kappa}^{\rm HFB}(\lambda) = -m\lambda$ holds 
for all $m>0$, provided that $I^{\rm HFB}_{0,\kappa}(\lambda)$ has a minimizer. On the other hand, it is not difficult to see that if 
$I_{m,\kappa}^{\rm HFB}(\lambda)$ has a minimizer with $m>0$, then   $I_{m,\kappa}^{\rm HFB}(\lambda)>-m\lambda$.

The existence of minimizers for $I^{\rm HFB}_{0,\kappa}(\lambda^{\rm HFB}_\kappa)$ in Theorem \ref{thm:HFB-non-exist} is proved by 
applying the concentration-compactness method \cite{Lions1}, where the dichotomy scenario is ruled out by  Lemma 
\ref{lem:HFB-alternative}, together with the existence result from \cite{LenLew-10} for $I^{\rm HFB}_{m,\kappa}(\lambda)$ satisfying $m>0$ and 
$0<\lambda<\lambda^{\rm HFB}_\kappa$. Following the earlier results in \cite{LenLew-10}, we thus obtain the following  complete 
alternative picture:
\begin{itemize}
\item For sub-critical mass $0<\lambda<\lambda^{\rm HFB}_\kappa$, the variational problem $I^{\rm HFB}_{m,\kappa}(\lambda)$ has a minimizer for $m>0$, whereas $I^{\rm HFB}_{0,\kappa}(\lambda)$ does not have any minimizer.
	
\item For the critical mass $\lambda = \lambda^{\rm HFB}_\kappa$, the situation is reversed: $I^{\rm HFB}_{0,\kappa}(\lambda^{\rm 
HFB}_\kappa)$ has a minimizer, but $I^{\rm HFB}_{m,\kappa}(\lambda^{\rm HFB}_\kappa)$ does not  have any minimizer for $m>0$.
\end{itemize}

The existence result of $I^{\rm HFB}_{0,\kappa}(\lambda^{\rm HFB}_\kappa)$ also implies that the mapping $\kappa \to \lambda^{\rm 
HFB}_\kappa$ is {\em strictly decreasing}, while it is only known from  \cite{LenLew-10} that  $\lambda^{\rm HFB}_\kappa$ is 
nonincreasing and continuous in $\kappa\in(0, 4/\pi)$. This allows us to define the {\em critical coupling constant} $\lambda\mapsto \kappa^{\rm 
HFB}_\lambda$ as a dual version of $\kappa\mapsto \lambda^{\rm HFB}_\kappa$, $i.e.,$
\begin{align}\label{eq:kappa-critical-lambda}
\kappa^{\rm HFB}_\lambda &= \max \left\{ \kappa \in (0,4/\pi):\,  I^{\rm HFB}_{m,\kappa}(\lambda) >-\infty \text { 
for all }m\ge 0 \right\} \nn\\
&=  \max \left\{ \kappa \in (0,4/\pi):\, I^{\rm HFB}_{0,\kappa}(\lambda)=0 \right\} .
\end{align}
We can also  characterize $\kappa_\lambda^{\rm HFB}$ by the following Gagliardo--Nirenberg type inequality:

\linespread{1.0}
\begin{thm}[Gagliardo--Nirenberg inequality of HFB problem] \label{lem:GN-HFB} Denote the limit
$\lim_{\kappa \nearrow 4/\pi} \lambda^{\rm HFB}_\kappa:=\lambda^*.$ Then the mapping $\lambda\to \kappa^{\rm HFB}_\lambda$ 
is a strictly decreasing, continuous, and bijective function from $(\lambda^*,\infty)$ to $(0,4/\pi)$, and $\kappa^{\rm 
HFB}_\lambda\sim \tau_c \lambda^{-2/3}$ as $\lambda\to \infty$, where $\tau_c$ is given in \eqref{eq:def-tauc}. Moreover, we 
have for every $\lambda\in (\lambda^*,\infty)$,
\begin{align}\label{eq:GN-HFB}
\kappa^{\rm HFB}_\lambda = \inf \Big\{ \frac{2\Tr (\sqrt{-\Delta}\gamma)}{D(\rho_\gamma,\rho_\gamma)-X(\gamma)+X(\alpha)}:\,  (\gamma,\alpha)\in \cK,\,\Tr \gamma = \lambda\Big\},
\end{align}
which admits a minimizer.
\end{thm}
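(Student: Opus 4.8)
The plan is to realize the mapping $\lambda\mapsto\kappa^{\rm HFB}_\lambda$ as the functional inverse of $\kappa\mapsto\lambda^{\rm HFB}_\kappa$, and to read off all the stated properties and the variational formula from this duality combined with Theorem~\ref{thm:HFB-non-exist}. First I would record that, by the scaling \eqref{scale-E0} together with \eqref{eq:Chandrasekhar-limit}, one has $I^{\rm HFB}_{0,\kappa}(\lambda)\in\{0,-\infty\}$ and, more precisely, $I^{\rm HFB}_{0,\kappa}(\lambda)=0$ if and only if $0<\lambda\le\lambda^{\rm HFB}_\kappa$: if some admissible pair had negative energy, letting $\beta\to\infty$ in \eqref{scale-E0} would force the infimum to be $-\infty$, while otherwise $\cE^{\rm HFB}_{0,\kappa}\ge0$ on the whole constraint set and $\beta\to0$ shows the infimum is $0$. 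Hence, by \eqref{eq:kappa-critical-lambda},
\[
\kappa^{\rm HFB}_\lambda=\max\big\{\kappa\in(0,4/\pi):\ \lambda\le\lambda^{\rm HFB}_\kappa\big\}.
\]
By Theorem~\ref{thm:HFB-non-exist} the map $\kappa\mapsto\lambda^{\rm HFB}_\kappa$ is strictly decreasing, and it is continuous by \cite{LenLew-10}, with $\lambda^{\rm HFB}_\kappa\to\infty$ as $\kappa\to0^+$ (from \eqref{eq:lambdaHFB}) and $\lambda^{\rm HFB}_\kappa\searrow\lambda^*$ as $\kappa\nearrow4/\pi$ by definition of $\lambda^*$. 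Thus $\kappa\mapsto\lambda^{\rm HFB}_\kappa$ is a strictly decreasing continuous bijection from $(0,4/\pi)$ onto $(\lambda^*,\infty)$, its inverse is a map of the same type from $(\lambda^*,\infty)$ onto $(0,4/\pi)$, and the maximum above is attained at the unique $\kappa$ with $\lambda^{\rm HFB}_\kappa=\lambda$. This identifies $\lambda\mapsto\kappa^{\rm HFB}_\lambda$ with that inverse and yields the claimed monotonicity, continuity, and bijectivity. Inverting the asymptotics $\lambda^{\rm HFB}_\kappa\sim(\tau_c/\kappa)^{3/2}$ as $\kappa\to0^+$ then gives $\kappa^{\rm HFB}_\lambda\sim\tau_c\lambda^{-2/3}$ as $\lambda\to\infty$.

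For the variational formula, I would abbreviate $\cG(\gamma,\alpha):=D(\rho_\gamma,\rho_\gamma)-\Ex(\gamma)+\Ex(\alpha)$, which is nonnegative because $\Ex(\gamma)\le D(\rho_\gamma,\rho_\gamma)$ by \eqref{inter-bdd1} and $\Ex(\alpha)\ge0$; then $\cE^{\rm HFB}_{0,\kappa}(\gamma,\alpha)=\Tr(\sqrt{-\Delta}\gamma)-\tfrac\kappa2\cG(\gamma,\alpha)$. By the first step, $\kappa^{\rm HFB}_\lambda$ is the largest $\kappa$ for which $\kappa\,\cG(\gamma,\alpha)\le2\Tr(\sqrt{-\Delta}\gamma)$ holds for every $(\gamma,\alpha)\in\cK$ with $\Tr\gamma=\lambda$. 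On pairs with $\cG=0$ this is automatic, while on pairs with $\cG>0$ it reads $\kappa\le2\Tr(\sqrt{-\Delta}\gamma)/\cG(\gamma,\alpha)$; taking the infimum over the latter yields exactly \eqref{eq:GN-HFB}, with the convention that the quotient is $+\infty$ when the denominator vanishes.

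It then remains to produce a minimizer. Since $\kappa^{\rm HFB}_\lambda$ is the inverse value, $\lambda$ is precisely the critical mass for the coupling $\kappa=\kappa^{\rm HFB}_\lambda$, so Theorem~\ref{thm:HFB-non-exist} supplies a minimizer $(\gamma_0,\alpha_0)\in\cK$ of $I^{\rm HFB}_{0,\kappa^{\rm HFB}_\lambda}(\lambda)=0$ with $\Tr\gamma_0=\lambda$. Then $\cE^{\rm HFB}_{0,\kappa^{\rm HFB}_\lambda}(\gamma_0,\alpha_0)=0$ means $\Tr(\sqrt{-\Delta}\gamma_0)=\tfrac12\kappa^{\rm HFB}_\lambda\,\cG(\gamma_0,\alpha_0)$; since $\gamma_0\ne0$ forces $\Tr(\sqrt{-\Delta}\gamma_0)>0$, we get $\cG(\gamma_0,\alpha_0)>0$ and the quotient in \eqref{eq:GN-HFB} equals $\kappa^{\rm HFB}_\lambda$, so $(\gamma_0,\alpha_0)$ attains the infimum.

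I expect the genuine difficulty to be already discharged: the hard analytic input—the strict monotonicity of $\kappa\mapsto\lambda^{\rm HFB}_\kappa$ and the existence of the $m=0$ minimizer at the critical mass—is exactly the content of Theorem~\ref{thm:HFB-non-exist}. Granting that, the present argument is a duality and bookkeeping exercise; the only points needing care are the nonnegativity of $\cG$ in the third step and the verification that $\cG(\gamma_0,\alpha_0)>0$ in the last step, both of which follow immediately once the minimizer saturates the energy identity.
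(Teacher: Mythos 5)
Your proposal is correct and follows essentially the same route as the paper: both realize $\lambda\mapsto\kappa^{\rm HFB}_\lambda$ as the inverse of $\kappa\mapsto\lambda^{\rm HFB}_\kappa$, and both obtain the variational formula \eqref{eq:GN-HFB} together with its minimizer from the $m=0$ minimizer at the critical mass supplied by Theorem \ref{thm:HFB-non-exist}. The only point to tighten is that the strict decrease of $\kappa\mapsto\lambda^{\rm HFB}_\kappa$ is not part of the statement of Theorem \ref{thm:HFB-non-exist} and must be derived, as the paper does, from the attained infimum (if $\lambda^{\rm HFB}_{\kappa_1}=\lambda^{\rm HFB}_{\kappa_2}$ then both $\kappa_i$ equal the same attained quotient, forcing $\kappa_1=\kappa_2$) -- a one-line step for which you already have all the ingredients.
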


Theorem \ref{lem:GN-HFB} gives  the asymptotic formula $\kappa^{\rm HFB}_\lambda\sim \tau_c \lambda^{-2/3}$ as $\lambda\to 
\infty$, which is consistent with the many-body  analogue
\begin{align}\label{eq:GN-Q}
\inf \Big\{ \frac{ \langle \Psi, \sum_{i=1}^N \sqrt{-\Delta_{x_i}} \Psi\rangle}{\langle \Psi, \sum_{i<j} |x_i-x_j|^{-1} 
\Psi\rangle}:\, \Psi \in \bigwedge^NL^{2}(\R^3),\  \|\Psi\|_{L^2}^2=1\Big\} \sim \tau_cN^{-3/2} 
\end{align}
as $N\to\infty$ due to Lieb and Yau \cite{LieYau-87}. Recently, the formula \eqref{eq:GN-Q} was extended by Frank, Hoffmann-Ostenhof, Laptev 
and Solovej \cite{manybodyhd2} to cover the fractional Laplacian $(-\Delta)^s$ with $0<s\le 1$ and  the corresponding Riesz 
potential $|x-y|^{-2s}$; see also \cite{manybodyhd} for earlier results on many-body Hardy inequalities. Unlike the nonlinear 
variational problem \eqref{eq:GN-HFB}, the infimum of \eqref{eq:GN-Q} has no minimizers, since the energy functional is 
translation-invariant.

\subsection{Hartree--Fock theory} 

We now focus on the HF theory. We first consider an analogue of \eqref{eq:GN-HFB} restricted to the projections $\gamma = 
\gamma^2$ and $\alpha = 0$, or equivalently we consider \eqref{eq:GN-Q} only for Slater determinants. Denote
\begin{equation}\label{set}
\cP_{N}:=\Big\{\gamma=\sum\limits_{j=1}^{N}|u_{j}\rangle\langle u_{j}|:\,	u_{j}\in H^{1/2}(\R^3),\ \langle 
u_{j},u_k\rangle=\delta_{jk},\ j, k=1,\cdots, N\Big\}.
\end{equation}
We establish the following Gagliardo--Nirenberg inequality in the HF theory.

\begin{thm}[Gagliardo--Nirenberg inequality for HF problem]\label{th2.1} For every $2\le N\in\mathbb{N}$, the variational problem
\begin{align}\label{eq:GN-HF}
\kappa^{\rm HF}_N:= \inf_{\gamma \in \cP_N} \frac{2\Tr (\sqrt{-\Delta} \gamma)}{D(\rho_\gamma,\rho_\gamma)-X(\gamma)}
\end{align}
admits a minimizer. Moreover, we have
\begin{enumerate}
\item[(1)]  The sequence $\{\kappa^{\rm HF}_N\}_{N\ge 2}$ is strictly decreasing in $N$, and  $\kappa^{\rm HF}_N\sim 
\tau_c N^{-2/3}$ as $N\to \infty$, where $\tau_c>0$ is given in \eqref{eq:def-tauc}.
		
\item[(2)]  Any minimizer $\gamma$ of (\ref{eq:GN-HF}) can be written in the form 
$\gamma=\sum_{j=1}^{N}|w_{j}\rangle\langle w_{j}|$, where the orthonormal  functions $w_1, \cdots, w_N$ are the 
eigenfunctions of
\begin{equation}\label{h8} H_{\gamma}:=\sqrt{-\Delta}-\kappa^{\rm HF}_N\big(\rho_{\gamma}\ast|\cdot|^{-1}\big)(x) 
+\kappa^{\rm HF}_N\frac{\gamma(x,y)}{|x-y|}\ \,\ \text{on} \ \, L^2(\R^3),
\end{equation}
together with negative eigenvalues $\nu_{1}\leq\nu_{2}\leq\cdots \leq \nu_{N}<0$, $w_{j}\in C^\infty(\R^3)$ satisfies the decaying property
\begin{equation}\label{decay}
\rho_{\gamma}(x)=\sum_{j=1}^{N}|w_{j}(x)|^{2}\leq C\big(1+|x|\big)^{-8}\ \ \text{in}\ \  \R^3.
\end{equation}
\end{enumerate}
\end{thm}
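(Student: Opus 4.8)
The plan is to establish the four assertions through a single variational loop driven by concentration--compactness and an induction on $N$, taking the HFB minimizer of Theorem \ref{lem:GN-HFB} as an external input. Write $J(\gamma):=2\Tr(\sqrt{-\Delta}\gamma)\big/\bigl(D(\rho_\gamma,\rho_\gamma)-X(\gamma)\bigr)$ for the quotient in \eqref{eq:GN-HF}. Since $J$ is invariant under the mass--preserving dilations $\gamma\mapsto\gamma_\beta$ and under translations, I first break the dilation symmetry by minimizing $2\Tr(\sqrt{-\Delta}\gamma)$ subject to $D(\rho_\gamma,\rho_\gamma)-X(\gamma)=1$ over $\gamma\in\cP_N$. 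A minimizing sequence $\{\gamma_n\}$ then has bounded kinetic energy and $\Tr\gamma_n=N$, so after extracting a weak-$*$ limit $\gamma$ one has $0\le\gamma\le1$ and $\Tr\gamma\le N$; the constraint $D-X=1$ blocks both spreading and concentration, leaving a translation--driven splitting of mass as the sole obstruction to strong compactness. The crucial observation is that if strong convergence in the trace class is secured, then $\gamma_n=\gamma_n^2\to\gamma^2$ forces $\gamma=\gamma^2$ with $\Tr\gamma=N$, so the limit is automatically a rank-$N$ projection and solves \eqref{eq:GN-HF}; no separate idempotency argument is then needed.

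To exclude the splitting I introduce the relaxed infimum $\kappa(\lambda):=\inf\{J(\gamma):0\le\gamma\le1,\ \Tr\gamma=\lambda\}$ over real $\lambda>0$. When the sequence splits into two bubbles of masses $\lambda_1+\lambda_2=N$, the vanishing of the cross terms in $D$ and $X$ makes the limiting quotient a $B$--weighted average of the bubble quotients, hence at least $\min_i\kappa(\lambda_i)$; since a rank-one bubble has $D-X=0$ and thus infinite quotient, no mass escapes in a single orbital, and the split is impossible as soon as $\lambda\mapsto\kappa(\lambda)$ is \emph{strictly} decreasing. This strict monotonicity is where the induction enters: once a relaxed minimizer exists, its Euler--Lagrange equation $[\gamma,H_\gamma]=0$ fills the lowest eigenstates of the mean--field operator $H_\gamma$ of \eqref{h8} up to a Fermi level $\mu$, and inserting an infinitesimal amount of the Fermi orbital yields $\tfrac{d}{d\lambda}\kappa(\lambda)=2\mu\big/(D-X)$, strictly negative precisely because $\sigma_{\mathrm{ess}}(H_\gamma)=[0,\infty)$ forces $\mu<0$ for a bound state. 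The resulting circularity---existence needs strict monotonicity, strict monotonicity needs the minimizer---I would untangle by inducting on the particle number, anchoring the base case with the HFB minimizer and the finite--rank relaxation of Frank, Gontier and Lewin \cite{ii,FGL}. The genuinely delicate point, which I expect to be the main obstacle, is to rule out a \emph{fractionally occupied degenerate Fermi shell}: at integer mass one must show the relaxed minimizer is idempotent rather than a spherically symmetric average over a partially filled angular--momentum multiplet, which is where the sign structure of the attractive interaction and the exchange term $X(\gamma)$ must be exploited carefully.

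Granting a smooth compact minimizer $\gamma=\sum_{j=1}^N|w_j\rangle\langle w_j|$, the remaining items are comparatively direct. The negativity $\nu_1\le\cdots\le\nu_N<0$ follows because $\sigma_{\mathrm{ess}}(H_\gamma)=[0,\infty)$ and each occupied orbital is a genuine $L^2$ eigenfunction below the essential spectrum; smoothness $w_j\in C^\infty$ comes from elliptic bootstrapping for the order--one operator $\sqrt{-\Delta}$ against the smooth, decaying mean--field potential. For the asymptotics $\kappa_N^{\rm HF}\sim\tau_cN^{-2/3}$, the lower bound is immediate since $\kappa_N^{\rm HF}$ is exactly the restriction of the many--body quotient \eqref{eq:GN-Q} to Slater determinants and hence dominates it, and the Lieb--Yau analysis \cite{LieYau-87} identifies the large--$N$ behaviour of the latter with the constant $\tau_c$; the matching upper bound comes from a Slater trial state built from the rescaled Thomas--Fermi optimizer of \eqref{eq:def-tauc}, whose kinetic energy matches the semiclassical value while the exchange term $X(\gamma)$ is of lower order. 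Finally, the decay \eqref{decay} follows by rewriting the eigenvalue equation as $w_j=(\sqrt{-\Delta}+|\nu_j|)^{-1}\bigl(\kappa_N^{\rm HF}Vw_j\bigr)$ and using that in $\R^3$ the resolvent kernel of $\sqrt{-\Delta}+a$ behaves like $|x|^{-4}$ at infinity; since the mean--field potential $V$ decays like $|x|^{-1}$, a convolution bootstrap upgrades any initial polynomial decay of $w_j$ in unit steps to the ceiling $|w_j(x)|\lesssim(1+|x|)^{-4}$, whence $\rho_\gamma=\sum_j|w_j|^2\lesssim(1+|x|)^{-8}$.
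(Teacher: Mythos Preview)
Your overall architecture diverges from the paper's in the choice of relaxation and, more critically, in the mechanism for strict monotonicity; the latter contains a genuine gap. The paper relaxes not to $\{0\le\gamma\le1,\ \Tr\gamma=\lambda\}$ but to $\mathcal{R}_N=\{\gamma\ge0:\operatorname{Rank}\gamma\le N\}$, inserting $\|\gamma\|$ into the numerator (see \eqref{an}--\eqref{rn}). This choice makes existence of a relaxed minimizer \emph{unconditional}: since IMS localization preserves the rank bound, both pieces in a dichotomy remain in $\mathcal{R}_N$ and obey the same lower bound $\kappa_N^*$, so the inequality chain closes with no monotonicity hypothesis at all (Lemma \ref{than}). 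Your relaxation over real masses forces you to compare bubbles at different $\lambda$, and your proposed formula $\tfrac{d}{d\lambda}\kappa(\lambda)=2\mu/(D-X)$ does not deliver strict monotonicity at integer $\lambda$: if the relaxed minimizer there is already a rank-$k$ projection, the right derivative is governed by the $(k{+}1)$-th eigenvalue of $H_\gamma$, not by the occupied Fermi level $\nu_k$, and nothing in your argument shows $\nu_{k+1}<0$. The appeal to $\sigma_{\mathrm{ess}}(H_\gamma)=[0,\infty)$ only places the \emph{occupied} eigenvalues below zero.

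The paper breaks this circularity by an entirely different device (Lemma \ref{nomin}): once a rank-$N$ projection optimizer of $\kappa_N^*$ exists, the scaling argument behind Lemma \ref{lem:HFB-alternative} shows that $I^{\rm HF}_{m,\kappa_N^*}(N)=-mN$ has no minimizer for any $m>0$; but the subcritical existence theorem of Lenzmann--Lewin \cite[Theorem~4]{LenLew-10} guarantees a minimizer whenever $N<N^{\rm HF}_\kappa$, so necessarily $I^{\rm HF}_{m,\kappa_N^*}(N+1)=-\infty$, i.e.\ $\kappa_N^*>\kappa_{N+1}^*$. Thus strict monotonicity is not a local Euler--Lagrange computation but a global consequence of the $m=0$ versus $m>0$ existence--nonexistence alternative, fed by an external existence result you never invoke (the HFB minimizer plays no role here; the base case is simply $\kappa_2^*<\kappa_1^*=\infty$). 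With this in hand, the ``fractionally occupied shell'' obstacle dissolves: the inductive hypothesis $\kappa_{N-1}^*>\kappa_N^*$ forces $\operatorname{Rank}\gamma=N$, and a direct perturbation (inequality \eqref{2.35cb}) yields each $\nu_j<0$ and each occupation equal to $1$ (Lemma \ref{than1}). Your sketches for $C^\infty$ regularity, the asymptotics $\kappa_N^{\rm HF}\sim\tau_cN^{-2/3}$, and the $(1+|x|)^{-8}$ decay are broadly in line with the paper's treatment, though note that there $\nu_N<0$ is obtained variationally from $\kappa_{N-1}^*>\kappa_N^*$, not from the absence of embedded eigenvalues.
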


\linespread{1.5}

The proof of Theorem \ref{th2.1} is based on the insights from the HFB theory, but we face the extra challenges in implementing 
an induction argument, because of the nonlinear constraint $\gamma = \gamma^2$.  As we shall illustrate, the constant 
$\kappa^{\rm HF}_N$ of \eqref{eq:GN-HF} coincides with the optimal constant of the following interpolation inequality
\begin{equation}\label{orthonormal}
\|\gamma\|\mathrm{Tr}\big(\sqrt{-\Delta}\, \gamma\big)\geq\frac{\kappa^{\rm HF}_N}{2}
\int_{\R^{3}}\int_{\R^{3}}\frac{\rho_{\gamma}(x)\rho_{\gamma}(y)-|\gamma(x, y)|^{2}}{|x-y|}dxdy,
\ \  \forall\, \gamma\in \mathcal{R}_{N},
\end{equation}
where
\begin{equation}\label{rn}
\mathcal{R}_{N}:=\Big\{ \gamma:\, \gamma=\gamma^* \ge 0, \ \mathrm{Rank}(\gamma)\leq N, \ \mathrm{Tr}\big(\sqrt{-\Delta}\, 
\gamma\big)<\infty\Big\}.
\end{equation}
The inequality \eqref{orthonormal} can be interpreted as an extension of the following Gagliardo--Nirenberg inequality for 
relativistic boson stars:
\begin{equation*}\label{gnt}
\|u\|_{2}^{2}\big\langle u, \sqrt{-\Delta}u\big\rangle \geq \frac{\kappa}{2}\int_{\R^{3}}\int_{\R^{3}} 
\frac{|u(x)|^{2}|u(y)|^{2}}{|x-y|}dxdy, \,\ \forall\, u\in H^{1/2}(\R^3),
\end{equation*}
which was recently discussed by Lieb and Yau \cite{LieYau-87}, see also \cite{gnt,2009unique,2007fr0hlich,gnt,nguyen,Guo17} and 
the references therein.

To prove Theorem \ref{th2.1}, we first study the existence of optimizers for the relaxed problem (\ref{orthonormal}), which is 
easier than \eqref{eq:GN-HF} with the nonlinear constraint $\gamma=\gamma^2$. This idea goes back to the work of Lieb and Simon  
\cite{LieSim-77} on the standard Hartree--Fock theory for Coulomb systems, and the choice $\mathrm{Rank}(\gamma)\leq N$ in the 
variational set \eqref{rn} is inspired by the recent work of Frank, Gontier and Lewin
\cite{ii,FGL} on the best constant $K_{N}>0$ of the following finite-rank Lieb-Thirring inequality
\begin{equation*}\label{eq:FGL}
\|\gamma\|^{2/3}\Tr(-\Delta \gamma)
\geq K_{N} \int_{\R^{3}}\rho_\gamma(x)^{\frac{5}{3}} dx,\ \ \forall\, \gamma\in\mathcal{R}_{N}.
\end{equation*}
In order to deduce that the optimizer $\gamma $ of \eqref{orthonormal} is an $N$-dimensional orthogonal projection, we need to 
prove the strict monotonicity
\begin{equation}\label{1.12a}
+\infty:=\kappa_{1}^{\rm HF} >	\kappa_{2} ^{\rm HF} >\kappa_{3}^{\rm HF}>\kappa_{4}^{\rm HF}>\cdots>\kappa^{\rm 
HF}_N>\cdots>0.
\end{equation}
This key step of proving (\ref{1.12a}) is established in Lemma \ref{nomin}  by an induction argument, using an analogue of 
Lemma \ref{lem:HFB-alternative} on the existence-nonexistence alternative of  Hartree--Fock minimizers with $m>0$ and $m=0$, 
together with the existence result in \cite{LenLew-10}.
It is interesting that the strict monotonicity of \eqref{1.12a} is completely different from \cite[Theorem 6]{ii}, where the 
function $N\mapsto K_{N}$ may not be strictly decreasing, and hence the picture in the present work is much clearer.

The decaying estimate \eqref{decay} is of independent interest, which is useful to discuss the refined profiles of minimizers 
for (\ref{eq:GN-HF}). We remark that \eqref{decay} is somewhat similar to that of the following relativistic Hartree problem 
for bosons:
$$\sqrt{-\Delta}w-\int_{\R^{3}}\frac{w(y)}{|x-y|}dy\, w=-w \ \ \mbox{in} \,\ \R^3,$$
which was addressed by Frank and Lenzmann in \cite{gnt}. However, our system is more involved, because the mean-field operator 
$H_{\gamma }$ defined in (\ref{h8}) has an exchange term. The decaying estimate \eqref{decay} is proved by using Hardy-Kato 
inequality (cf. \cite{Kato-72}), in the spirit of the ``Schr\"odinger inequality" approach from M.  Hoffmann-Ostenhof and T. 
Hoffmann-Ostenhof \cite{ho}. Moreover, we handle directly the exchange term with a pointwise estimate, which differs from the 
standard decaying analysis in  \cite{LieSim-77}. We expect that this alternative approach is also useful in other contexts.

For $m\ge 0$ and $\kappa>0$, we now consider the HF variational problem
\begin{equation}\label{problem}
\begin{split}
I^{\rm HF}_{m,\kappa}(N):=\inf\limits_{\gamma\in\cP_{N}}\cE^{\rm HF}_{m,\kappa}(\gamma),
\ \ \kappa>0,\ 2\leq N\in\mathbb{N},
\end{split}
\end{equation}
where
\begin{equation*}\label{functional}
\cE^{\rm HF}_{m,\kappa}(\gamma):=\mathrm{Tr}\Big( \big(\sqrt{-\Delta+m^{2}}-m\big)\gamma\Big)  - 
\frac{\kappa}{2}\int_{\R^{3}}\int_{\R^{3}}\frac{\rho_{\gamma}(x)\rho_{\gamma}(y)-|\gamma(x,y)|^{2}}{|x-y|}dxdy.
\end{equation*}
As in \cite[Section 3.3]{LenLew-10}, we denote by $N^{\rm HF}_\kappa$ the largest integer such that $I^{\rm HF}_{m,\kappa}(N) > 
-\infty$. This number is finite and independent of $m \geq 0$. We then get from \cite[Theorem 4]{LenLew-10} that for every $m > 
0$ and $\kappa > 0$, the minimizers of $I^{\rm HF}_{m,\kappa}(N)$ exist for any $2 \leq N < N^{\rm HF}_\kappa$. As a 
consequence of Theorem \ref{th2.1}, we obtain the following complete characterization on the existence and non-existence of 
minimizers at the critical mass $N^{\rm HF}_\kappa$.

\linespread{0.7}
\begin{thm}[HF mimimizers at the critical mass]\label{cor1}
For $m>0$, let $\kappa^{\rm HF}_N$ be defined in \eqref{eq:GN-HF}. Then for every $\kappa>0$, the HF variational problem 
$I^{\rm HF}_{m,\kappa}(N^{\rm HF}_\kappa)$ in \eqref{problem} has a minimizer, if and only if
$$\kappa \notin \{\kappa_N^{\rm HF}\}_{N\ge 2} \cup (\kappa_2^{\rm HF},\infty).$$
More precisely, we have the following conclusions:
\begin{enumerate}
\item[(1)] If $\kappa = \kappa^{\rm HF}_N$ holds for some $2 \leq N \in \mathbb{N}$, or $\kappa > \kappa_2^{\rm HF}$, then 
$N^{\rm HF}_\kappa = N$ or $N^{\rm HF}_\kappa = 1$, respectively, and $I^{\rm HF}_{m,\kappa}\big(N^{\rm HF}_\kappa\big)$ 
has no minimizer  in both cases.
		
\item[(2)]  If $\kappa\in (\kappa_{N+1}^{\rm HF}, \kappa^{\rm HF}_N)$ holds for some $2\leq N\in\mathbb{N}$, then $N^{\rm 
HF}_\kappa=N$, and  $I^{\rm HF}_{m,\kappa}\big(N^{\rm HF}_\kappa\big)$ admits at least one minimizer.
\end{enumerate}
\end{thm}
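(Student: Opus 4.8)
The plan is to reduce the whole statement to the massless problem via the Gagliardo--Nirenberg inequality of Theorem~\ref{th2.1} and its strict monotonicity \eqref{1.12a}, and to split the analysis into the three regimes $\kappa=\kappa^{\rm HF}_N$, $\kappa>\kappa^{\rm HF}_2$, and $\kappa\in(\kappa^{\rm HF}_{N+1},\kappa^{\rm HF}_N)$. First I would identify $N^{\rm HF}_\kappa$; since it is independent of $m$, I work at $m=0$. For $\gamma\in\cP_N$ one has $\|\gamma\|=1$, so \eqref{orthonormal} reads $\Tr(\sqrt{-\Delta}\,\gamma)\ge\tfrac{\kappa^{\rm HF}_N}{2}\big(D(\rho_\gamma,\rho_\gamma)-X(\gamma)\big)$ and hence $\cE^{\rm HF}_{0,\kappa}(\gamma)\ge(1-\kappa/\kappa^{\rm HF}_N)\Tr(\sqrt{-\Delta}\,\gamma)$. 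Thus $I^{\rm HF}_{0,\kappa}(N)\ge0$ whenever $\kappa\le\kappa^{\rm HF}_N$, while for $\kappa>\kappa^{\rm HF}_N$ a near-optimizer of \eqref{eq:GN-HF} has negative energy and the scaling $\gamma_\beta$ of \eqref{scale-E0} forces $I^{\rm HF}_{0,\kappa}(N)=-\infty$ as $\beta\to\infty$. Together with $+\infty=\kappa^{\rm HF}_1>\kappa^{\rm HF}_2>\cdots\to0$ from \eqref{1.12a}, this shows $N^{\rm HF}_\kappa=N$ exactly when $\kappa\in(\kappa^{\rm HF}_{N+1},\kappa^{\rm HF}_N]$ and $N^{\rm HF}_\kappa=1$ when $\kappa>\kappa^{\rm HF}_2$, which is the identification of $N^{\rm HF}_\kappa$ claimed in both (1) and (2).

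For the nonexistence in (1), consider first $\kappa>\kappa^{\rm HF}_2$, so $N^{\rm HF}_\kappa=1$: the direct and exchange terms cancel and $I^{\rm HF}_{m,\kappa}(1)=\inf_{\|u\|=1}\langle u,(\sqrt{-\Delta+m^2}-m)u\rangle=0$, the bottom of the purely essential spectrum, which is not attained. For $\kappa=\kappa^{\rm HF}_N$ with $N\ge2$, the optimizer of \eqref{eq:GN-HF} furnished by Theorem~\ref{th2.1} is a minimizer of $I^{\rm HF}_{0,\kappa}(N)$ with zero energy, and I would run the massless-to-massive scaling exactly as in Lemma~\ref{lem:HFB-alternative}: a short computation with $\gamma_\beta$ gives $\cE^{\rm HF}_{m,\kappa}(\gamma_\beta)\to-mN$ as $\beta\to\infty$ (using $0\le\sqrt{\beta^2|p|^2+m^2}-\beta|p|\le m\to0$ pointwise and dominated convergence), whence $I^{\rm HF}_{m,\kappa}(N)\le-mN$. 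Conversely $\sqrt{-\Delta+m^2}-m\ge\sqrt{-\Delta}-m$ together with $\cE^{\rm HF}_{0,\kappa^{\rm HF}_N}\ge0$ on $\cP_N$ gives $\cE^{\rm HF}_{m,\kappa}(\gamma)\ge-mN$, so $I^{\rm HF}_{m,\kappa}(N)=-mN$. Since $\sqrt{|p|^2+m^2}-|p|>0$ for every $p$, equality in the lower bound forces $\gamma=0$; hence $\cE^{\rm HF}_{m,\kappa}>-mN$ strictly on $\cP_N$ and no minimizer exists.

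For the existence in (2), fix $\kappa\in(\kappa^{\rm HF}_{N+1},\kappa^{\rm HF}_N)$ with $N=N^{\rm HF}_\kappa\ge2$. The strict inequality $\kappa<\kappa^{\rm HF}_N$ yields genuine coercivity, $\cE^{\rm HF}_{m,\kappa}(\gamma)\ge(1-\kappa/\kappa^{\rm HF}_N)\Tr(\sqrt{-\Delta}\,\gamma)-mN$, so minimizing sequences are bounded in $\cX$, and I would apply the concentration--compactness method of \cite{Lions1} following \cite{LenLew-10}. Vanishing is excluded because $I^{\rm HF}_{m,\kappa}(N)<0$ --- a small-$\beta$ (non-relativistic) scaling of any smooth $\gamma\in\cP_N$ with $D(\rho_\gamma,\rho_\gamma)-X(\gamma)>0$ gives $\cE^{\rm HF}_{m,\kappa}(\gamma_\beta)\approx-\tfrac{\kappa\beta}{2}(D(\rho_\gamma,\rho_\gamma)-X(\gamma))<0$ --- while a vanishing density kills the interaction through \eqref{inter-bdd1} and the Sobolev embedding $\dot{H}^{1/2}(\R^3)\hookrightarrow L^3(\R^3)$. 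Dichotomy is excluded by the strict binding inequality $I^{\rm HF}_{m,\kappa}(N)<I^{\rm HF}_{m,\kappa}(N_1)+I^{\rm HF}_{m,\kappa}(N_2)$ for every splitting $N_1+N_2=N$ with $1\le N_j<N$: here \eqref{1.12a} gives $\kappa<\kappa^{\rm HF}_N<\kappa^{\rm HF}_{N_j}$, so each $N_j$ is sub-critical and $I^{\rm HF}_{m,\kappa}(N_j)$ is attained by \cite[Theorem~4]{LenLew-10}; placing the two minimizers at distance $R$ apart produces a trial state whose leading energy correction is the attractive Newtonian term $-\kappa N_1N_2/R<0$, dominating the cross kinetic, exchange and orthonormalization errors. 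Ruling out both scenarios gives relative compactness up to translation, and the trace-norm limit of rank-$N$ projections with conserved mass $N$ is again a rank-$N$ projection, hence a minimizer of $I^{\rm HF}_{m,\kappa}(N)$.

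I expect the main obstacle to be the strict binding inequality in (2), and especially the splitting in which a single particle escapes ($N_1=N-1$, $N_2=1$): there the localization needed to realize the attraction $-\kappa(N-1)/R$ competes at the same order $R^{-1}$ with the kinetic cost of the pseudo-relativistic operator $\sqrt{-\Delta}$, and because the minimizers decay only polynomially (cf. \eqref{decay}) rather than exponentially, one must verify carefully that all cross and orthonormalization terms are $o(R^{-1})$ so that the attractive Newtonian term remains the leading correction.
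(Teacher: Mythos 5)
Your proposal follows essentially the same route as the paper: the identification of $N^{\rm HF}_\kappa$ via the strict monotonicity \eqref{1.12a}, the nonexistence at $\kappa=\kappa^{\rm HF}_N$ via the massless-to-massive scaling (this is exactly the content of Lemma \ref{nomin}, which the paper simply invokes), and the existence on the open intervals $(\kappa^{\rm HF}_{N+1},\kappa^{\rm HF}_N)$ via coercivity plus the concentration--compactness argument of \cite[Theorem 4]{LenLew-10}, which the paper cites rather than reproves. The only slip is your claim that $I^{\rm HF}_{m,\kappa}(N_j)$ is attained for every $1\le N_j<N$ by \cite[Theorem 4]{LenLew-10}: for $N_j=1$ the infimum equals $0$ and is not attained, so the binding inequality $I^{\rm HF}_{m,\kappa}(N)<I^{\rm HF}_{m,\kappa}(N-1)+I^{\rm HF}_{m,\kappa}(1)$ must instead be obtained by sending a spread-out single particle to infinity as in \cite{LenLew-10} --- a point you correctly flag as the main obstacle.
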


As a dual version of Theorem \ref{cor1}, we may also address the existence and nonexistence of minimizers for $I^{\rm 
HF}_{m,\kappa}(N)$ defined in (\ref{problem}), where $m > 0$ and $2 \le N \in \mathbb{N}$ are fixed, while $\kappa > 0$ is 
treated as a varying parameter. More precisely, we have
\begin{itemize}
\item If $0<\kappa<\kappa^{\rm HF}_N$, then it is shown in \cite[Theorem 4]{LenLew-10} that $I^{\rm HF}_{m,\kappa}(N)$ has 
a minimizer, and any minimizer $\gamma$ of $I^{\rm HF}_{m,\kappa}(N)$ can be written in the form  $\gamma=\sum_{j=1} 
^N|u_{j}\rangle \langle u_{j}|$, where $\langle u_{i}, u_{j}\rangle =\delta_{ij}$ and
\begin{align}\label{equation}
H_{\gamma}u_{j}=\nu_{j} u_{j}\ \ \text{in}\, \ \R^3, \ \ j=1,\cdots, N.
\end{align}
Here $\nu_{1} \leq \nu_{2}\leq\cdots\leq\nu_{N}<0$ are $N$ negative eigenvalues of the mean field operator
\begin{equation}\label{operator}
H_{\gamma}:=\sqrt{-\Delta+m^{2}}-m-\kappa(|x|^{-1}\ast\rho_{\gamma})+\kappa \frac{\gamma(x,y)}{|x-y|}\ \ \text{on} \ \, L^2(\R^3).
\end{equation}

\item If $\kappa=\kappa^{\rm HF}_N$, then $I^{\rm HF}_{m,\kappa}(N)=-mN$ and $I^{\rm HF}_{m,\kappa^{\rm HF}_N}(N)$ has no minimizer, see Lemma \ref{nomin} below.

\item If $\kappa >  \kappa^{\rm HF}_N$, then $I^{\rm HF}_{m,\kappa}(N)=-\infty$.

\end{itemize}

Motivated by above results, we now consider the minimizers of $I^{\rm HF}_{m,\kappa}(N)$ as $\kappa \nearrow \kappa^{\rm 
HF}_N$, where the constant $\kappa^{\rm HF}_N>0$ is given by \eqref{eq:GN-HF}. We shall show that the limiting behavior can be 
fully described by the optimizer of the Gagliardo-Nirenberg inequality \eqref{eq:GN-HF} given in Theorem \ref{th2.1}.

\begin{thm}[Limiting profile of HF minimizers]\label{th2}
For $m>0$ and $2\leq N\in\mathbb{N}$, let $\gamma_{\kappa_{n}}=\sum_{j=1}^{N}|u^{\kappa_{n}}_{j}\rangle \langle 
u^{\kappa_{n}}_{j}|$ be a minimizer of $I_{m,\kappa_{n}}^{\rm HF}(N)$ satisfying $\kappa_{n}\nearrow \kappa^{\rm HF}_N$ as 
$n\to\infty$, where $(u_1^{\kappa_{n}}, \cdots, u_{N}^{\kappa_{n}})$ satisfies \eqref{equation}. Then there exist a subsequence 
of $\{\gamma_{\kappa_{n}}\}$, still denoted by $\{\gamma_{\kappa_{n}}\}$,  and a sequence $\{z_n\}\subset\R^3$ such that for 
all $j=1, \cdots, N,$
\begin{equation}\label{1.23a}
\epsilon_{\kappa_{n}}^{3/2}u_{j}^{\kappa_{n}}(\epsilon_{\kappa_{n}}x+\epsilon_{\kappa_{n}}z_n) \rightarrow w_{j}(x) \ \ \text{strongly\ in}\ \ H^{1/2}(\R^{3})\cap L^{\infty}(\R^{3})
\end{equation}
as $n\rightarrow\infty$, where  $\gamma^{*}:=\sum_{j=1} 
^N|w_{j}\rangle \langle w_{j}|$ is an optimizer of the variational problem
\begin{equation}\label{dn}
d_{N}^{*}:=\inf\Big\{\Tr\left( (-\Delta)^{-1/2} \gamma \right):\, \gamma\ \text{is\ an\ optimizer\ of}\ 
\eqref{eq:GN-HF},\ \Tr\big(\sqrt{-\Delta}\, \gamma\big)=1 \Big\},
\end{equation}
and
\begin{equation}\label{1.12b}
\begin{split}
\epsilon_{\kappa_{n}}:=\frac{1}{\Tr(\sqrt{-\Delta}\, \gamma_{\kappa_{n}})}=\Big[\frac{2( \kappa^{\rm 
HF}_N-\kappa_{n})}{m^2 \kappa^{\rm HF}_N d_N^* }\Big]^{1/2}\big[1+o(1)\big]\ \ as \ \, n\to\infty.
\end{split}\end{equation}
\end{thm}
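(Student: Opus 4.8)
The plan is to carry out a blow-up (concentration) analysis as $\kappa_n \nearrow \kappa^{\rm HF}_N =: \kappa^*$: I expect the minimizers to spread out in momentum, so that after rescaling to unit kinetic energy they converge to an optimizer of the Gagliardo--Nirenberg problem \eqref{eq:GN-HF}, while the selection of the particular optimizer and the sharp rate \eqref{1.12b} come from a second-order expansion in the mass $m$. Throughout write $V(\gamma) := \tfrac12\big(D(\rho_\gamma,\rho_\gamma) - X(\gamma)\big)$, $T_n := \Tr(\sqrt{-\Delta}\,\gamma_{\kappa_n})$ and $\epsilon_{\kappa_n} := 1/T_n$, and let $\tilde\gamma_n(x,y) := \epsilon_{\kappa_n}^3\,\gamma_{\kappa_n}(\epsilon_{\kappa_n}x,\epsilon_{\kappa_n}y)$, with orbitals $\tilde u_j^{(n)}(x) = \epsilon_{\kappa_n}^{3/2}u_j^{\kappa_n}(\epsilon_{\kappa_n}x)$. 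By the scaling used in \eqref{scale-E0} one has $\tilde\gamma_n \in \cP_N$, $\Tr(\sqrt{-\Delta}\,\tilde\gamma_n)=1$ and $V(\tilde\gamma_n) = \epsilon_{\kappa_n}V(\gamma_{\kappa_n})$. Setting $g(p):=\sqrt{p^2+m^2}-|p|$, so that $\sqrt{-\Delta+m^2}-m = \sqrt{-\Delta}+g(\sqrt{-\Delta})-m$ with $0<g(p)\le\min\{m,\,m^2/(2|p|)\}$, the energy reads
\[
I^{\rm HF}_{m,\kappa_n}(N)+mN=\big(T_n-\kappa_n V(\gamma_{\kappa_n})\big)+\Tr\!\big(g(\sqrt{-\Delta})\gamma_{\kappa_n}\big),
\]
where both terms are nonnegative, the first by the inequality \eqref{orthonormal} at $\kappa^*$ and the second since $g>0$.

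First I would fix the energy. The lower bound $I^{\rm HF}_{m,\kappa_n}(N)\ge -mN$ is immediate from the display. For a matching upper bound I would test with a rescaled state $(\gamma^{\rm opt})_\beta$, where $\gamma^{\rm opt}$ optimizes \eqref{eq:GN-HF} (Theorem \ref{th2.1}), normalized by $\Tr(\sqrt{-\Delta}\,\gamma^{\rm opt})=1$ and with $\Tr((-\Delta)^{-1/2}\gamma^{\rm opt})$ arbitrarily close to $d_N^*$; optimizing $\beta\,\tfrac{\kappa^*-\kappa_n}{\kappa^*}+\tfrac{m^2}{2\beta}\Tr((-\Delta)^{-1/2}\gamma^{\rm opt})+o(1)$ over $\beta$ gives $I^{\rm HF}_{m,\kappa_n}(N)+mN\le m\big[2(\kappa^*-\kappa_n)d_N^*/\kappa^*\big]^{1/2}(1+o(1))\to 0$. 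Hence $I^{\rm HF}_{m,\kappa_n}(N)\to -mN$ and both terms in the display tend to $0$. Since $\Tr(g(\sqrt{-\Delta})\gamma_{\kappa_n})\ge g(R)(N-T_n/R)$ for every $R>0$ (split the momentum integral at $|p|=R$ and use Chebyshev), the vanishing of the second term forces $T_n\to\infty$, i.e.\ the claimed blow-up.

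The analytic heart is compactness of $\{\tilde\gamma_n\}$. Vanishing of the first term with $T_n\to\infty$ yields $\kappa^* V(\tilde\gamma_n)\to 1=\Tr(\sqrt{-\Delta}\,\tilde\gamma_n)$, so $\{\tilde\gamma_n\}$ is a minimizing sequence for \eqref{eq:GN-HF}. I would apply the concentration--compactness method to $\rho_{\tilde\gamma_n}$: vanishing is impossible since it would give $V(\tilde\gamma_n)\to 0$, and dichotomy is ruled out by the strict monotonicity \eqref{1.12a} --- a split into rank-$N_1$ and rank-$N_2$ pieces ($N_1+N_2=N$, $1\le N_i\le N-1$) with kinetic energies $K_1,K_2$, $K_1+K_2\to1$, would force $\kappa^* V(\tilde\gamma_n)\le \kappa^*\big(K_1/\kappa^{\rm HF}_{N_1}+K_2/\kappa^{\rm HF}_{N_2}\big)+o(1)$, which stays strictly below $K_1+K_2\to1$ because $\kappa^{\rm HF}_{N_i}>\kappa^*$, contradicting $\kappa^* V(\tilde\gamma_n)\to1$ (rank-$1$ pieces contribute nothing, as there $D=X$, consistent with $\kappa^{\rm HF}_1=+\infty$). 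Thus, up to a translation $z_n$, $\tilde\gamma_n(\cdot+z_n)\to\gamma^*=\sum_j|w_j\rangle\langle w_j|$ strongly in $H^{1/2}$ with $\gamma^*$ an optimizer of \eqref{eq:GN-HF}. I expect this dichotomy exclusion, for the nonlocal operator $\sqrt{-\Delta}$ and the nonlocal exchange term $X$, to be the main obstacle, requiring localization and commutator estimates in the spirit of \cite{LenLew-10}.

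It remains to identify the limit and the constant. From the rescaled Euler--Lagrange equation --- which converges to \eqref{h8} --- I would derive, via the Hardy--Kato/Schr\"odinger-inequality method, the decay \eqref{decay} uniformly in $n$; this controls the infrared regime and yields $\Tr((-\Delta)^{-1/2}\tilde\gamma_n)\to\Tr((-\Delta)^{-1/2}\gamma^*)=:D^*$ together with $\Tr(g(\sqrt{-\Delta})\gamma_{\kappa_n})=\tfrac{m^2}{2}\epsilon_{\kappa_n}D^*(1+o(1))$. Inserting this and $V(\tilde\gamma_n)\to 1/\kappa^*$ into the energy identity and minimizing $T_n\mapsto T_n\tfrac{\kappa^*-\kappa_n}{\kappa^*}+\tfrac{m^2}{2T_n}D^*$ by AM--GM gives $I^{\rm HF}_{m,\kappa_n}(N)+mN\ge m[2(\kappa^*-\kappa_n)D^*/\kappa^*]^{1/2}(1-o(1))$; comparison with the upper bound forces $D^*=d_N^*$, so $\gamma^*$ is an optimizer of \eqref{dn}, and the AM--GM equality case pins $T_n=\tfrac{m}{\sqrt2}[\kappa^*d_N^*/(\kappa^*-\kappa_n)]^{1/2}(1+o(1))$, which is precisely \eqref{1.12b}. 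Finally the $H^{1/2}$ convergence is upgraded to $L^\infty$ using the uniform decay and elliptic regularity for \eqref{h8}.
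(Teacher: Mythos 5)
Your overall architecture matches the paper's proof (Lemmas \ref{lem3.1a} and \ref{lem3.1}): decompose $I^{\rm HF}_{m,\kappa_n}(N)+mN$ into the two nonnegative pieces $T_n-\kappa_n V(\gamma_{\kappa_n})$ and $\Tr\big(g(\sqrt{-\Delta})\gamma_{\kappa_n}\big)$, match an upper bound from rescaled trial states against an AM--GM/Cauchy--Schwarz lower bound to extract both $d_N^*$ and the rate \eqref{1.12b}, and upgrade $H^{1/2}$ to $L^\infty$ convergence via uniform spatial decay plus $H^2$ bounds. Your Chebyshev argument for $T_n\to\infty$ is a nice, more elementary alternative to the paper's route, which instead derives a contradiction with the nonexistence of minimizers at $\kappa=\kappa^{\rm HF}_N$ (Lemma \ref{nomin}) using the compactness machinery of \cite{LenLew-10}.

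The one step that does not work as written is your dichotomy exclusion. Localizing does not produce ``rank-$N_1$ and rank-$N_2$ pieces with $N_1+N_2=N$'': each localized piece $\chi_{R}\tilde\gamma_n\chi_{R}=\sum_j|\chi_R \tilde u_j^{(n)}\rangle\langle\chi_R \tilde u_j^{(n)}|$ and its complement generically have rank equal to $N$ --- only the traces split, and into non-integer values --- so you cannot invoke $\kappa^{\rm HF}_{N_i}>\kappa^{\rm HF}_N$ for the pieces, and the strict subadditivity you need does not close. What repairs this, and what the paper actually does, is to pass to the relaxed functional on $\cR_N=\{\gamma=\gamma^*\ge0,\ \mathrm{Rank}(\gamma)\le N\}$ with the prefactor $\|\gamma\|$ as in \eqref{orthonormal}: each localized piece is admissible there with the \emph{same} constant $\kappa_N^*$, the resulting chain of inequalities becomes an equality chain in the limit, and one concludes that the weak limit saturates the relaxed inequality. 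The strict monotonicity $\kappa_N^*<\kappa_{N-1}^*$ (Lemmas \ref{than1} and \ref{nomin}) then enters \emph{afterwards}, to show that any such optimizer is a full rank-$N$ projection, hence carries trace exactly $N$, which is what forces $u_j^n\to u_j$ strongly in $L^2$ and rules out mass loss. So \eqref{1.12a} is indeed the decisive ingredient, but it acts through the projection property of optimizers of the relaxed problem rather than through a rank-additive splitting; as stated, your exclusion step has a genuine gap that requires this extra idea to fill.
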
	

Theorem \ref{th2} presents the mass concentration of minimizers for $I_{m,\kappa_{n}}^{\rm HF}(N)$ as $\kappa_{n}\nearrow 
\kappa^{\rm HF}_N$.
Although the limiting behavior of \eqref{1.23a} is similar to that of bosonic systems already addressed in \cite{Guo17}, where 
the Newton potential $|x|^{-1}$ is replaced by a function $K(x)/|x|$   satisfying
$\lim_{|x|\to0}(1-K(x))|x|^{-p}\in(0,\infty)$ for some $p>1$, both the exchange term and the Newton potential 
$|x|^{-1}$
of $I_{m,\kappa_{n}}^{\rm HF}(N)$ lead to extra difficulties in analyzing the blow-up rate $\epsilon_{\kappa_{n}}^{-1}$.
A key ingredient for the proof of Theorem \ref{th2} is a detailed investigation of the limiting optimization problem $d_N^*$, 
which is given in Lemma \ref{lem3.1a}. In particular, we  use the decaying property  \eqref{decay}, together with the 
Hardy--Kato inequality \eqref{eq:HK}, to deduce that
\begin{align}\label{eq:HK-decay}
\Tr \big( (-\Delta)^{-1/2}\gamma\big) \le C \Tr (|x|\gamma) = C\int_{\R^3} |x|\rho_\gamma(x) dx <\infty
\end{align}
holds for any optimizer $\gamma$ of $\kappa_N^{\rm HF}$. We are then able to establish in Lemma \ref{lem3.1} the following 
energy estimate
\begin{align}\label{1.12}
&I^{\rm HF}_{m,\kappa_{n}}(N)+mN \sim  m \Big[\frac{2d_N^*(\kappa^{\rm HF}_N-\kappa_{n})}{\kappa^{\rm HF}_N}\Big]^{1/2} \to 
0\ \ \ \mbox{as}\ \ n\rightarrow\infty,
\end{align}
which eventually implies the blow-up rate $\epsilon_{\kappa_{n}}^{-1}$ in \eqref{1.12b}.

\subsection*{Organization of the proof} The rest of the paper is devoted to the proofs of all above results. In Section 
\ref{sec:HFB}, we focus on the Hartree-Fock-Bogoliubov theory, where we address the proofs of Lemma \ref{lem:HFB-alternative}, 
Theorem \ref{thm:HFB-non-exist}, and Theorem \ref{lem:GN-HFB}. We then turn to the Hartree-Fock theory, for which we shall 
complete in Section \ref{sec:GN-ineq-ON} the proofs of Theorems \ref{th2.1} and \ref{cor1}. Finally,   Theorem \ref{th2} is 
proved in Section \ref{sec:pf-lim-min}.

\subsection*{Acknowledgments} P. T. Nam  would like to thank Rupert L. Frank for helpful remarks on the ``Schr\"odinger inequality" approach from \cite{ho} and Quoc Hung Nguyen for his warm hospitality during a visit in 2024 
to the Academy of Mathematics and Systems Science (CAS), where part of this work was carried out. Y. J. Guo is partially 
supported by the National Key R \& D Program of China (Grant 2023YFA1010001) and the NSF of China (Grants 12225106 and 
12371113). P. T. Nam and D. H. Ou Yang are partially supported by the European Research Council (ERC Consolidator Grant RAMBAS 
Project Nr. 10104424) and the Deutsche Forschungsgemeinschaft (TRR 352 Project Nr. 470903074).

\section{Hartree--Fock--Bogoliubov minimizers}\label{sec:HFB}

In this section, we prove all results on the Hartree-Fock-Bogoliubov  (HFB) theory. We start with the proof of Lemma \ref{lem:HFB-alternative} in Section \ref{sec:31}, and the proofs of Theorems \ref{thm:HFB-non-exist} and \ref{lem:GN-HFB} are then given in Section \ref{sec:32}. For this purpose, in this section we always fix $\kappa\in (0,4/\pi)$ and denote $\lambda^{\rm HFB}_\kappa$ as the critical mass defined in \eqref{eq:Chandrasekhar-limit}.

\subsection{Proof of Lemma \ref{lem:HFB-alternative}} \label{sec:31}
\begin{proof}
We first observe that if $I^{\rm HFB}_{0,\kappa}(\lambda)$ has a minimizer, then we have for all $m>0$,
\begin{equation}\label{3:3.1}
	I^{\rm HFB}_{m,\kappa}(\lambda)=-m\lambda .
\end{equation}
In fact, since $\sqrt{-\Delta+m^2}\ge \sqrt{-\Delta}$ and $I_{0,\kappa}^{\rm HFB}(\lambda)=0$ (cf. \cite[Appendix B]{LenLew-10}), we have the obvious lower bound $I_{m,\kappa}^{\rm HFB}(\lambda) \ge -m\lambda$.

On the other hand, consider a minimizer $(\gamma,\alpha)$ of $I_{0,\kappa}^{\rm HFB}(\lambda)$. Define for $n\in \mathbb{N}$,
$$	\gamma_{n}(x,y):=n^3 \gamma(nx,ny), \ \ \alpha_{n}(x,y):=n^{3}\alpha(nx,ny).$$
We then derive from \eqref{scale-E0} that $(\gamma_n,\alpha_n)\in \cK$, $\Tr (\gamma_n)=\Tr \gamma=\lambda$ and $\cE_{0,\kappa}^{\rm HFB}(\gamma_n,\alpha_n)=n \cE_{0,\kappa}^{\rm HFB}(\gamma,\alpha)=0$. Therefore,
\begin{align}\label{E(gn,an)}
I^{\rm HFB}_{m,\kappa}(\lambda) \le \cE_{m,\kappa}^{\rm HFB}(\gamma_n,\alpha_n)&=\cE_{0,\kappa}^{\rm HFB}(\gamma_n,\alpha_n)-m \Tr (\gamma_{n}) +\Tr(B_{m}\gamma_n) \nonumber\\
&= -m\lambda +\Tr(B_{m,n}\gamma),
\end{align}
where
\begin{align*}
B_{m} &= \sqrt{-\Delta+m^2} - \sqrt{-\Delta}= \frac{m^2}{ \sqrt{-\Delta+m^{2}}+\sqrt{-\Delta}}>0, \nonumber\\
B_{m,n}&= \frac{m^2}{ \sqrt{-n^2 \Delta+m^{2}}+\sqrt{-n^2 \Delta}}\le m.
\end{align*}
By the Fourier transform, we then have $\langle u, B_{m,n}u\rangle \to 0$ as  $n\to \infty$ for every $u\in L^2(\R^3)$.  Since $\gamma$ is in the trace class, it further implies that
$\Tr(B_{m,n} \gamma) \to 0$ as $n\to \infty$, due to the Dominated Convergence Theorem. Thus, we deduce from \eqref{E(gn,an)} that $I_{m,\kappa}^{\rm HFB}(\lambda)\le -m\lambda$, and hence (\ref{3:3.1}) holds ture.

Moreover, since $\cE_{0,\kappa}^{\rm HFB}(\widetilde \gamma,\widetilde \alpha) \ge I^{\rm HFB}_{0,\kappa}(\lambda)=0$ holds  for every $(\widetilde \gamma,\widetilde \alpha)\in \cK$ satisfying $\Tr (\widetilde \gamma)=\lambda$,  we have
$$
\cE_{m,\kappa}^{\rm HFB}(\widetilde \gamma,\widetilde \alpha)= \cE_{0,\kappa}^{\rm HFB}(\widetilde \gamma,\widetilde \alpha) - m \lambda + \Tr (B_m \widetilde \gamma) > -m \lambda\ \ \text{for every} \ (\widetilde \gamma,\widetilde \alpha)\in \cK,
$$
where $B_m$ as before  is a strictly positive operator on $L^2(\R^3)$  with a trivial kernel.
This implies that $\cE_{m,\kappa}^{\rm HFB}(\lambda)$ has no minimizer for any $m>0$, and the proof of Lemma \ref{lem:HFB-alternative} is therefore complete.
\end{proof}

\subsection{Proofs of Theorems \ref{thm:HFB-non-exist} and  \ref{lem:GN-HFB}}\label{sec:32}
The aim of this subsection is to prove Theorems \ref{thm:HFB-non-exist} and  \ref{lem:GN-HFB}.
Following the existence result (cf.    \cite{LenLew-10}) for the sub-critical mass case with $m > 0$, we first address the proof of Theorem \ref{thm:HFB-non-exist} by applying Lemma \ref{lem:HFB-alternative} and the concentration-compactness method \cite{Lions1}.

\begin{proof}[Proof of Theorem \ref{thm:HFB-non-exist}] We first prove that  the problem $I^{\rm HFB}_{0,\kappa}(\lambda^{\rm HFB}_\kappa)$ admits at least one minimizer, where $\lambda^{\rm HFB}_\kappa>0$ denotes the critical mass  defined in \eqref{eq:Chandrasekhar-limit}. By the scaling property \eqref{scale-E0}, we can choose a minimizing sequence $\{(\gamma_n,\alpha_n)\}$ of $I^{\rm HFB}_{0,\kappa}(\lambda^{\rm HFB}_\kappa)=0$ such that $\Tr(\sqrt{-\Delta}\gamma_n)=1$ holds for all $n\in \mathbb{N}$.
	
Following  \cite{Lions1}, we define
\begin{equation*}\label{con-fun}
Q_{n}(R) := \sup_{y \in \R^{3}} \int_{|x - y| \leq R} \rho_{\gamma_{n}}(x) dx, \,\ R \in (0, \infty).
\end{equation*}
For every $n \in \mathbb{N}$, the function $R \mapsto Q_n(R)$ is increasing, and
$$\lim_{R\to \infty}Q_n(R) = \Tr \gamma_n =  \lambda^{\rm HFB}_\kappa.$$
Therefore, up to a subsequence, we obtain from Helly's selection theorem that ${Q_{n}(R)}$ has a pointwise limit $Q(R) := \lim_{n \to \infty} Q_{n}(R)$. Set
\begin{align}\label{lim-con-fun}
\lambda^{1}:=\lim_{R\rightarrow\infty}Q(R)\in [0,\lambda^{\rm HFB}_\kappa].
\end{align}
As explained in \cite[Lemma 1.1]{Lions1}, we have the following three possibilities:
\begin{enumerate}
\item[(i)] (Compactness) $\lambda^{1}=\lambda^{\rm HFB}_\kappa$;
\item[(ii)] (Vanishing) $\lambda^{1}=0$;
\item[(iii)] (Dichotomy) $0<\lambda^{1}< \lambda^{\rm HFB}_\kappa$.
\end{enumerate}
We now rule out the vanishing case.

\begin{lem}\label{prop:no-vanishing}
The constant $\lambda^1 $ of (\ref{lim-con-fun}) satisfies $\lambda^1 > 0$.
\end{lem}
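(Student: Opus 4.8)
The plan is to argue by contradiction: suppose the vanishing scenario (ii) occurs, so that $\lambda^1=0$, which by \eqref{lim-con-fun} means $Q_n(R)\to 0$ as $n\to\infty$ for every fixed $R\in(0,\infty)$. Under this hypothesis I would show that the direct interaction $D(\rho_{\gamma_n},\rho_{\gamma_n})$ tends to zero, while the pairing term $X(\alpha_n)$ stays controlled by the kinetic energy with a \emph{subcritical} constant. Combined with the normalization $\Tr(\sqrt{-\Delta}\gamma_n)=1$, this forces $\liminf_n \cE^{\rm HFB}_{0,\kappa}(\gamma_n,\alpha_n)>0$, contradicting the fact that $\{(\gamma_n,\alpha_n)\}$ is a minimizing sequence for $I^{\rm HFB}_{0,\kappa}(\lambda^{\rm HFB}_\kappa)=0$.

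First I would record that $\sqrt{\rho_{\gamma_n}}$ is bounded in $H^{1/2}(\R^3)$: it is bounded in $L^2(\R^3)$ since $\|\sqrt{\rho_{\gamma_n}}\|_{L^2}^2=\int_{\R^3}\rho_{\gamma_n}=\Tr\gamma_n=\lambda^{\rm HFB}_\kappa$, and bounded in $\dot H^{1/2}(\R^3)$ since $\langle\sqrt{\rho_{\gamma_n}},\sqrt{-\Delta}\sqrt{\rho_{\gamma_n}}\rangle\le \Tr(\sqrt{-\Delta}\gamma_n)=1$ by \cite[Lemma 2.1]{LenLew-10}. The vanishing hypothesis is precisely
\[
\sup_{y\in\R^3}\int_{|x-y|\le R}\big|\sqrt{\rho_{\gamma_n}}(x)\big|^2\,dx=Q_n(R)\to 0,\qquad \forall\, R>0.
\]
By the standard vanishing lemma of Lions \cite{Lions1}, adapted to the space $H^{1/2}(\R^3)$, this implies $\sqrt{\rho_{\gamma_n}}\to 0$ strongly in $L^p(\R^3)$ for every $2<p<3$; in particular $\|\sqrt{\rho_{\gamma_n}}\|_{L^{12/5}}\to 0$. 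The upper bound in \eqref{inter-bdd1} then yields $D(\rho_{\gamma_n},\rho_{\gamma_n})\lesssim \|\sqrt{\rho_{\gamma_n}}\|_{L^{12/5}}^4\to 0$, and a fortiori $X(\gamma_n)\le D(\rho_{\gamma_n},\rho_{\gamma_n})\to 0$.

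Finally, for the pairing term I would invoke \eqref{inter-bdd3}, namely $X(\alpha_n)\le \tfrac{\pi}{2}\Tr(\sqrt{-\Delta}\gamma_n)=\tfrac{\pi}{2}$, which is the place where the Hardy--Kato inequality \eqref{eq:HK} enters. Dropping the nonnegative term $\tfrac{\kappa}{2}X(\gamma_n)$ in the energy functional, I obtain
\[
\cE^{\rm HFB}_{0,\kappa}(\gamma_n,\alpha_n)\ge \Tr(\sqrt{-\Delta}\gamma_n)-\frac{\kappa}{2}D(\rho_{\gamma_n},\rho_{\gamma_n})-\frac{\kappa}{2}X(\alpha_n)\ge 1-\frac{\kappa\pi}{4}-\frac{\kappa}{2}D(\rho_{\gamma_n},\rho_{\gamma_n}).
\]
Passing to the limit and using $D(\rho_{\gamma_n},\rho_{\gamma_n})\to 0$ gives $0=\lim_n\cE^{\rm HFB}_{0,\kappa}(\gamma_n,\alpha_n)\ge 1-\tfrac{\kappa\pi}{4}$, which is impossible because $\kappa<4/\pi$. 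Hence vanishing cannot occur and $\lambda^1>0$. The main conceptual obstacle is that, unlike the density-dependent terms, the pairing energy $X(\alpha_n)$ need not vanish under the vanishing hypothesis; the argument survives only because Hardy--Kato bounds $X(\alpha_n)$ by the kinetic energy with constant $\pi/2$, and the subcriticality $\kappa<4/\pi$ makes $1-\kappa\pi/4$ strictly positive --- exactly the threshold built into the existence of the critical mass.
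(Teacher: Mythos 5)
Your proof is correct and follows essentially the same route as the paper: rule out vanishing by showing $\sqrt{\rho_{\gamma_n}}\to 0$ in $L^{12/5}$ via the Lions/Lenzmann--Lewin vanishing lemma, deduce $D(\rho_{\gamma_n},\rho_{\gamma_n})\to 0$ from \eqref{inter-bdd1}, control $X(\alpha_n)$ by $\tfrac{\pi}{2}\Tr(\sqrt{-\Delta}\gamma_n)$ via \eqref{inter-bdd3}, and obtain the contradiction $0\ge 1-\kappa\pi/4>0$. The only cosmetic difference is that you drop the nonnegative exchange term $\tfrac{\kappa}{2}X(\gamma_n)$ while the paper notes it tends to zero; both are equally valid.
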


\begin{proof}
On the contrary, suppose that $\lambda^1=0$. Applying \cite[Lemma 7.2]{LenLew-10}, it then yields that $\|\sqrt{\rho_{\gamma_n}}\|_{L^p}\rightarrow 0$ as $n\to\infty$ for any $2<p<3$. We thus deduce from \eqref{inter-bdd1} that
\begin{align}\label{eq:non-vanishing-HFB}
0\leq \Ex(\gamma_{n}) \le D(\rho_{\gamma_{n}},\rho_{\gamma_{n}})\lesssim\|\sqrt{\rho_{\gamma_{n}}}\|_{L^{12/5}}^{4}\rightarrow 0\ \ as\ \ n\to\infty.
\end{align}
We further derive from \eqref{inter-bdd3} that
\begin{equation*}\label{vanish-lim}
\begin{split}\cE_{0,\kappa}^{\rm HFB}(\gamma_{n},\alpha_{n}) &= \Tr(\smash{\sqrt{-\Delta}\gamma_{n}})-\frac{\kappa}{2}\int_{\R^{3}}\int_{\R^{3}}\frac{|\alpha_{n}(x,y)|^{2}}{|x-y|}dxdy + o(1) \\
&\ge \big(1-\kappa\pi/4\big)\Tr(\smash{\sqrt{-\Delta}\gamma_{n}}) + o(1) \\
&=1-\kappa\pi/4+o(1) >0 \ \ \ \text{as}\ \ n\to\infty,
\end{split}
\end{equation*}
since $\kappa<4/\pi$ and $\Tr(\smash{\sqrt{-\Delta}\gamma_{n}})=1$. This however  contradicts with the assumption that $\cE_{0,\kappa}^{\rm HFB}(\gamma_{n},\alpha_{n}) \to 0$ as $n\to\infty$. Therefore, it holds that $\lambda^1 > 0$, and we are done.
\end{proof}

For $0<\lambda^{1}\le \lambda^{\rm HFB}_\kappa$, we now extract a nontrivial weak limit of $(\gamma_n,\alpha_n)$. Consider smooth cutoff functions $\chi,\eta:\R^3\to [0,1]$ satisfying
$$\chi^{2}+\eta^{2}=1,\,\ \one_{B_1}\le \chi \le \one_{B_2},$$
where $\one_{B_R}$ denotes the indicator function of the ball $B_R=\{|x|<R\}$. For each $R>0$, we define
\begin{align}\label{eq:def-chiR-etaR}\chi_{R}(x)=\chi(x/R),\,\ \eta_{R}(x)=\eta(x/R).
\end{align}
Applying \eqref{lim-con-fun}, we then have the following result.

\vspace{2mm}

\begin{lem}[Localized minimizing sequences]\label{lem:str-loc}
There exist $\{R_{n}\}\subset (0,\infty)$, which satisfies $R_{n}\rightarrow\infty$ as $n\to\infty$, and $\{y_{n}\}\subset \R^{3}$ such that
\begin{equation*}
\begin{split}
&(-\Delta)^{1/4}\tau_{y_{n}}\chi_{R_{n}}\gamma_{n}\chi_{R_{n}}\tau_{y_{n}}^{*}(-\Delta)^{1/4}\\
\rightharpoonup &\,(-\Delta)^{1/4}\gamma(-\Delta)^{1/4}\ \  \text{weakly-*\ in}\ \fS_{1}\ \ as\ \ n\to\infty,
\end{split}\end{equation*}
and
\begin{align}\label{3.9}
\tau_{y_{n}}^{*}\chi_{R_{n}}\gamma\chi_{R_{n}}\tau_{y_{n}}\rightarrow\gamma^1 \ \ \text{strongly\ in} \ \fS_{1}\ \ as\ \ n\to\infty,
\end{align}
where 
$\Tr(\smash{\gamma^1})=\lambda^{1}$,  $(\tau_y f)(x)=f(x-y)$ and $\fS_{1}$ denotes the space of trace-class operators.  Moreover, we have
\begin{equation}\label{eq:localization-estimate-key}
\begin{split}&\lim_{n\rightarrow\infty}\int_{R_{n}\leq |x-y_{n}|\leq 2R_{n}}\rho_{\sqrt{-\Delta}\gamma_{n}\sqrt{-\Delta}}(x)dx\\
&=\lim_{n\rightarrow\infty}\int_{R_{n}\leq |x-y_{n}|\leq 2R_{n}}\rho_{\gamma_{n}}(x)=0.
\end{split}\end{equation}
\end{lem}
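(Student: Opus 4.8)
The plan is to run the standard bubble-extraction of the concentration-compactness method \cite{Lions1} in the density-matrix formulation of \cite{LenLew-10}, with the nonlocality of $\sqrt{-\Delta}$ absorbed into a careful choice of localization radii. Since $\lambda^1=\lim_{R\to\infty}Q(R)$ is strictly positive by Lemma \ref{prop:no-vanishing}, I would first use the definition of $Q$ to record, for each $k\in\mathbb{N}$, a radius $r_k$ with $Q(r_k)\ge\lambda^1-1/k$. Because $Q_n(r_k)\to Q(r_k)$ and $Q_n$ is a supremum over centers, for each $k$ there are an index $N_k$ and, for every $n\ge N_k$, a point $y_n\in\R^3$ with $\int_{|x-y_n|\le r_k}\rho_{\gamma_n}(x)\,dx\ge\lambda^1-2/k$. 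A diagonal extraction in $k$ then yields centers $y_n$ and radii $R_n\to\infty$ for which $\int_{|x-y_n|\le R_n}\rho_{\gamma_n}(x)\,dx\to\lambda^1$, which after translation by $y_n$ places the captured mass near the origin.

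The crucial point, and the one I expect to be the main obstacle, is to arrange that the transition annulus $R_n\le|x-y_n|\le 2R_n$ carries vanishing particle \emph{and} kinetic density, i.e.\ \eqref{eq:localization-estimate-key}; this is exactly what lets the sharp cutoffs $\chi_{R_n},\eta_{R_n}$ of \eqref{eq:def-chiR-etaR} split the state without losing energy. The uniform bounds $\Tr\gamma_n=\lambda^{\rm HFB}_\kappa$ and $\Tr(\sqrt{-\Delta}\gamma_n)=1$ guarantee that the global integrals of $\rho_{\gamma_n}$ and of the kinetic density $\rho_{\sqrt{-\Delta}\gamma_n\sqrt{-\Delta}}$ are uniformly controlled. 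A pigeonhole over a dyadic family of shells inside a slowly growing window of scales then produces, for each $n$, a radius $R_n\to\infty$ on which both annular integrals are $o(1)$: among $\sim\log R_n$ disjoint shells whose contributions sum to a bounded quantity, at least one must carry $o(1)$ mass and energy, and the window can still be chosen so that the captured mass tends to $\lambda^1$. Because $\sqrt{-\Delta}$ is nonlocal, the kinetic energy does not split cleanly under multiplication by $\chi_{R_n}$, so here I would invoke the localization formula and the commutator estimates for $\sqrt{-\Delta}$ from \cite{LenLew-10}, whose error terms decay as $R_n\to\infty$, to make the kinetic bookkeeping rigorous.

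With such $R_n,y_n$ fixed, set $\widetilde\gamma_n:=\tau_{y_n}\chi_{R_n}\gamma_n\chi_{R_n}\tau_{y_n}^{*}$. The same commutator estimates show that $(-\Delta)^{1/4}\widetilde\gamma_n(-\Delta)^{1/4}$ is uniformly bounded in $\fS_1$, with trace norm controlled by $\Tr(\sqrt{-\Delta}\gamma_n)+o(1)$; since $\fS_1$ is the dual of the compact operators, Banach--Alaoglu yields a subsequence converging weak-$*$ to some $(-\Delta)^{1/4}\gamma(-\Delta)^{1/4}$, defining the bubble $\gamma$. To identify the mass and to obtain the strong convergence \eqref{3.9}, I would test the weak-$*$ convergence against compact operators supported in a fixed ball $B_{r_k}$, which gives $\Tr(\one_{B_{r_k}}\gamma)\ge\lim_n\int_{|x-y_n|\le r_k}\rho_{\gamma_n}\ge\lambda^1-2/k$ and hence $\Tr\gamma\ge\lambda^1$ after $k\to\infty$; conversely, weak-$*$ lower semicontinuity together with the vanishing annulus \eqref{eq:localization-estimate-key} gives $\Tr\gamma\le\lambda^1$, so $\Tr\gamma^1=\lambda^1$. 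Finally, convergence of the traces together with weak-$*$ convergence and positivity forces convergence in $\fS_1$-norm, which is precisely \eqref{3.9}. Once the annular estimate \eqref{eq:localization-estimate-key} is secured, these last steps are routine consequences of the uniform $\fS_1$-bounds and of mass conservation, so the entire difficulty is concentrated in selecting good radii $R_n\to\infty$ that simultaneously drain the annulus of kinetic energy and keep capturing the full mass $\lambda^1$.
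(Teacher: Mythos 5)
Your proposal is correct and takes essentially the same route as the paper, which in fact omits the proof of this lemma entirely and defers to \cite[Lemma 7.3]{LenLew-10}. Your sketch --- extracting centers and slowly growing radii from the concentration function, a dyadic pigeonhole to find an annulus carrying $o(1)$ mass and kinetic density simultaneously, the IMS/commutator estimates of \cite{LenLew-10} to control the nonlocal kinetic term under localization, and weak-$*$ $\fS_{1}$ compactness combined with convergence of traces and positivity to upgrade to strong $\fS_{1}$ convergence --- is precisely that argument.
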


Because the proof of Lemma \ref{lem:str-loc} is similar to that of \cite[Lemma 7.3]{LenLew-10}, we omit its details for simplicity. Since the energy functional $\cE_{0,\kappa}^{\rm HFB}$ is translational invariant, without loss of generality, we may assume that the sequence $\{y_{n}\}\subset \R^{3}$ of Lemma \ref{lem:str-loc} satisfies $y_{n}= 0$ for all $n\geq 0$. Also, since $\Tr (\alpha_n \alpha_n^*)\le \Tr (\gamma_n)=\lambda^{\rm HFB}_\kappa$ and $\Tr (\sqrt{-\Delta}\alpha_n \alpha_n^*)\le \Tr (\sqrt{-\Delta}\gamma_n)=1$,  we can assume that up to a subsequence, $\chi_{R_{n}} \alpha_n \rightharpoonup \alpha^1$ weakly in $\fS_{2}$ as $n\to\infty$, and
$$(-\Delta)^{1/4}\chi_{R_{n}}\alpha_{n}\alpha_n^*\chi_{R_{n}}(-\Delta)^{1/4}\rightharpoonup(-\Delta)^{1/4}\alpha^1 (\alpha^1)^*(-\Delta)^{1/4}$$
weakly-* in $\fS_{1}$ as $n\to\infty$, where the sequence $\{R_{n}\}\subset (0,\infty)$ is as in Lemma \ref{lem:str-loc}. Define
\begin{align}\label{eq:gamman1-gamman2} (\gamma_n^{(1)},\alpha_{n}^{(1)}):=\chi_{R_{n}}(\gamma_{n},\alpha_{n})\chi_{R_{n}},\ \ (\gamma_n^{(2)},\alpha_{n}^{(2)}):=\eta_{R_{n}}(\gamma_{n},\alpha_{n})\eta_{R_{n}}.
\end{align}
By \eqref{eq:localization-estimate-key} and the IMS localization formulas (cf. \cite[Lemma A.1]{LenLew-10}), we can split the mass
\begin{align}\label{eq:HFB-splitting-mass}
\lambda^{\rm HFB}_\kappa = \lambda^1 + (\lambda^{\rm HFB}_\kappa - \lambda^1) = \Tr \gamma_n^{(1)} + \Tr \gamma_n^{(2)} + o(1)\ \ \mbox{as}\ \, n\to \infty,
\end{align}
and split the total energy
\begin{equation}\label{eq:HFB-splitting-E}
\begin{split}\cE_{0,\kappa}^{\rm HFB} (\gamma_n,\alpha_n) 
\geq&\cE_{0,\kappa}^{\rm HFB} (\gamma_n^{(1)},\alpha_{n}^{(1)}) \\
&+  \cE_{0,\kappa}^{\rm HFB}  ( \gamma_n^{(2)},\alpha_{n}^{(2)} ) + o(1) \ \ \mbox{as}\ \, n\to \infty.
\end{split}\end{equation}
Since $ \cE_{0,\kappa}^{\rm HFB} (\gamma_n^{(1)},\alpha_{n}^{(1)})\geq0,\  \cE_{0,\kappa}^{\rm HFB}  ( \gamma_n^{(2)},\alpha_{n}^{(2)} ) \ge 0$ and $\cE_{0,\kappa}^{\rm HFB} (\gamma_n,\alpha_n) \to 0$ as $n\to\infty$, we obtain that
\begin{align}\label{eq:cE0-gamman1-alphan1-0}
\cE_{0,\kappa}^{\rm HFB} (\gamma_n^{(1)},\alpha_{n}^{(1)}) \to 0 \ \ \mbox{as}\ \, n\to \infty.
\end{align}

On the other hand, since $\gamma_n^{(1)} \to \gamma^{(1)}$ strongly in $\fS_{1}$ as $n\to\infty$, similar to \cite[Corollary 4.1]{LenLew-10}, we can prove  that
\begin{align}\label{eq:cE0-gamman1-alphan1-limit}
\liminf_{n\to \infty}\cE_{0,\kappa}^{\rm HFB} (\gamma_n^{(1)},\alpha_{n}^{(1)}) \ge \cE_{0,\kappa}^{\rm HFB} (\gamma^{(1)},\alpha^{(1)}) .
\end{align}
In fact, since $\Tr (\sqrt{-\Delta }\gamma_n^{(1)}) \lesssim 1$ holds for all $n$, we get from \eqref{3.9} that  $ \rho_{\gamma_n^{(1)}} \to \rho_{\gamma^{(1)}} $ strongly in $L^p(\R^3)$ as $n\to\infty$, where $1\le p<3/2$. Applying the Hardy--Littlewood--Sobolev inequality, this then implies that
\begin{align} \label{3.15}
D(\rho_{\gamma_n^{(1)}},\rho_{\gamma_n^{(1)}})\to D(\rho_{\gamma^{(1)}},\rho_{\gamma^{(1)}})\ \ \ \text{as}\ \ n\to\infty.
\end{align}
Moreover,  since $0\le \gamma_n^{(1)} - \alpha_n^{(1)} (\alpha_n^{(1)} )^* \rightharpoonup \gamma^{(1)} - \alpha^{(1)} (\alpha^{(1)} )^*$ weakly-* in $\fS_{1}$ as $n\to\infty$, by Fatou's lemma, we conclude that
\begin{align} \label{3.16}
\liminf_{n\to \infty}  \Tr\big(\sqrt{-\Delta} (\gamma_n^{(1)} - \alpha_n^{(1)} (\alpha_n^{(1)} )^*)\big)   \ge  \Tr\big(\sqrt{-\Delta} (\gamma^{(1)} - \alpha^{1} (\alpha^{(1)} )^*)\big).
\end{align}
Recall that $\alpha_n^{(1)}\rightharpoonup \alpha^{(1)}$ weakly in $\fS_{2}$ as $n\to\infty$, and it yields from the Hardy--Kato inequality that for $\kappa\in(0, 4/\pi)$,
$$
\sqrt{-\Delta_x} -  \frac{\kappa}{2|x-y|}\ge\sqrt{-\Delta_x} -  \frac{2}{\pi|x-y|}\ge 0\ \ \ \text{on}\ \ L^2(\R^3\times\R^3).
$$
By Fatou's lemma, this then implies  that  for $\kappa\in(0, 4/\pi)$,
\begin{equation} \label{eq:cE0-gamman1-alphan1-limit-b1}
\begin{split}
&\liminf_{n\to \infty} \Big\langle \alpha_n^{(1)}, \Big(\sqrt{-\Delta_x} -  \frac{\kappa}{2|x-y|}\Big) \alpha_n^{(1)}  \Big\rangle_{L^2(\R^3\times \R^3)}  \\
& \ge  \Big\langle \alpha^{(1)}, \Big(\sqrt{-\Delta_x} -  \frac{\kappa}{2|x-y|}\Big) \alpha^{(1)}  \Big\rangle_{L^2(\R^3\times \R^3)}.
\end{split}
\end{equation}
We thus deduce  from \eqref{3.16} and \eqref{eq:cE0-gamman1-alphan1-limit-b1} that
\begin{align}\label{eq:cE0-gamman1-alphan1-limit-b}
\liminf_{n\to \infty}\left( \Tr(\sqrt{-\Delta}\gamma_n^{(1)}) - \frac{\kappa}{2} \Ex(\alpha_n^{(1)}) \right) \ge \Tr(\sqrt{-\Delta}\gamma^{(1)}) -  \frac{\kappa}{2} \Ex(\alpha^{(1)}).
\end{align}
Since $(\gamma_n^{(1)}, \alpha_n^{(1)})\rightharpoonup (\gamma^{(1)}, \alpha^{(1)})$ weakly-* in $\cX$ as $n\to\infty$, we have
$$
\liminf_{n\to \infty} \Ex(\gamma_n^{(1)}) \ge \Ex(\gamma^{(1)}).$$
Therefore,   the claim \eqref{eq:cE0-gamman1-alphan1-limit} follows immediately from \eqref{3.15} and \eqref{eq:cE0-gamman1-alphan1-limit-b}.

Combining \eqref{eq:cE0-gamman1-alphan1-0} and \eqref{eq:cE0-gamman1-alphan1-limit},  we obtain that
\begin{align}\label{gyj}
(\gamma^{(1)},\alpha^{(1)}) \ \ \text{is a minimizer for}\ I^{\rm HFB}_{0,\kappa}(\lambda^1).
\end{align} Moreover, as explained in \cite[Proposition 4.1]{LenLew-10},  we also deduce from \eqref{eq:cE0-gamman1-alphan1-limit} that $(\gamma_n^{(1)},\alpha_n^{(1)}) $ $ \to (\gamma^{(1)},\alpha^{(1)})$ strongly in $\cX$ as $n\to\infty$.

Finally, if $0<\lambda^1<\lambda^{\rm HFB}_\kappa$, then we deduce from \eqref{gyj} and Lemma \ref{lem:HFB-alternative} that $I^{\rm HFB}_{m,\kappa}(\lambda^1)$ has no minimizer for every $m>0$. However, this contradicts with the existence result of  \cite[Theorem 1]{LenLew-10}.  Therefore, the dichotomy does not occur, which further yields that $\lambda^1=\lambda^{\rm HFB}_\kappa$. We hence conclude that $I^{\rm HFB}_{0,\kappa}(\lambda^{\rm HFB}_\kappa)$ has a minimizer $(\gamma^{(1)},\alpha^{(1)})$, and   every minimizing sequence of $I^{\rm HFB}_{0,\kappa}(\lambda^{\rm HFB}_\kappa)$ is relatively compact in $\cX$. Applying Lemma \ref{lem:HFB-alternative}, we also obtain that for every $m>0$,
$$I^{\rm HFB}_{m,\kappa}(\lambda^{\rm HFB}_\kappa)=-m \lambda^{\rm HFB}_\kappa,$$
but $I^{\rm HFB}_{m,\kappa}(\lambda^{\rm HFB}_\kappa)$ has no minimizer. The proof of Theorem \ref{thm:HFB-non-exist} is therefore complete.
\end{proof}

Applying Theorem \ref{thm:HFB-non-exist}, we are now ready to complete the proof of Theorem \ref{lem:GN-HFB}.

\begin{proof}[Proof of Theorem \ref{lem:GN-HFB}]  Let $\kappa\in(0, 4/\pi)$. Recall from Theorem \ref{thm:HFB-non-exist} that $I^{\rm HFB}_{0,\kappa}(\lambda^{\rm HFB}_\kappa)=0$ has a minimizer.
We then deduce that
\begin{align}\label{eq:kappa-dual-lambda}
\kappa = \inf \Big\{ \frac{2\Tr (\sqrt{-\Delta} \gamma)}{D(\rho_\gamma,\rho_\gamma)-X(\gamma)+X(\alpha)}:\,  (\gamma,\alpha)\in \cK,\,\Tr \gamma = \lambda^{\rm HFB}_\kappa\Big\},
\end{align}
and the above infimum can be attained.
In particular, this implies that $\kappa\mapsto \lambda^{\rm HFB}_\kappa$ is {\em injective}. Note from   \cite{LenLew-10} that  $\kappa\mapsto\lambda^{\rm HFB}_\kappa$ is nonincreasing and continuous. We thus conclude that $\kappa\mapsto \lambda^{\rm HFB}_\kappa$ is strictly decreasing,  and it is a bijective map from $(0,4/\pi)$ to $(\lambda^*,\infty)$, where $\lambda^*=\lim_{\kappa\to (4/\pi)-} \lambda^{\rm HFB}_\kappa$.

Consequently, for every $\lambda\in (\lambda^*,\infty)$, there exists a unique  $\kappa^{\rm HFB}_\lambda\in (0,4/\pi)$ satisfying the fixed-point constraint
\begin{align}\label{eq:kappa-lambda-fixed-point}
\lambda^{\rm HFB}_{ \kappa^{\rm HFB}_\lambda} =\lambda.
\end{align}
By the definition of the critical mass in \eqref{eq:Chandrasekhar-limit},  for every $\kappa\in(0, 4/\pi)$, we can  follow \eqref{scale-E0} to write
\begin{align}\label{eq:lambda-critical-v2}
\lambda^{\rm HFB}_\kappa &= \max \left\{ \lambda>0:\, I^{\rm HFB}_{m,\kappa} (\lambda) >-\infty \text{ for all } m\ge 0\right\} \nn\\
&=\max \left\{ \lambda>0:\,  I^{\rm HFB}_{0,\kappa}(\lambda) =0\right\}.
\end{align}

The characterization  \eqref{eq:kappa-critical-lambda} then follows directly from \eqref{eq:kappa-lambda-fixed-point} and \eqref{eq:lambda-critical-v2}. Moreover,   we deduce from \eqref{eq:kappa-dual-lambda} and \eqref{eq:kappa-lambda-fixed-point} that the identity \eqref{eq:GN-HFB} holds true and the minimization problem  \eqref{eq:GN-HFB} has a miminizer  for all $\lambda\in (\lambda^*,\infty)$. 
Finally, it yields from \eqref{eq:lambdaHFB} and \eqref{eq:kappa-lambda-fixed-point} that
\begin{align}\label{eq:lambda-critical-asymp-proof}
\tau_c^{3/2}=\lim_{\kappa\to 0^+} \kappa^{3/2} \lambda^{\rm HFB}_\kappa =  \lim_{\lambda\to \infty} (\kappa^{\rm HFB}_\lambda)^{3/2} \lambda^{\rm HFB}_{ \kappa^{\rm HFB}_\lambda} =  \lim_{\lambda\to \infty} (\kappa^{\rm HFB}_\lambda)^{3/2} \lambda,
\end{align}
$i.e.,$ $\kappa^{\rm HFB}_\lambda \sim \tau_c\lambda^{-2/3}$ as $\lambda\to \infty$. The proof of  Theorem \ref{lem:GN-HFB} is therefore complete.
\end{proof}

\section{Hartree--Fock minimizers }\label{sec:GN-ineq-ON}

From this section on, we focus on the minimizers of the Hartree-Fock (HF) theory. The main purpose of this section is to address Theorems \ref{th2.1} and \ref{cor1} on the existence and some qualitative properties of optimizers for $\kappa^{\rm HF}_N$. For this purpose, in this section we first prove the existence of  minimizers for the variational problem associated to \eqref{orthonormal}. We then establish in Section \ref{sec:43} the refined properties of these minimizers by deriving the strict inequality $\kappa_N^{\rm HF}<\kappa_{N-1}^{\rm HF}$. Theorems \ref{th2.1} and \ref{cor1} are finally proved in Section \ref{sec:44}.

To avoid the confusion, we  denote by $\kappa_N^*$ the optimal constant of the interpolation inequality \eqref{orthonormal} (we will prove that $\kappa_N^*=\kappa_N^{\rm HF}$ at the end). We start with the following variational problem associated with \eqref{orthonormal}.

\begin{lem}\label{than}
For every fixed $2\leq N\in\mathbb{N}$, the variational problem
\begin{equation}\label{an}
\begin{split}
\kappa^{*}_N:&=\inf\limits_{\gamma\in\mathcal{R}_{N}}
\frac{2\|\gamma\|\mathrm{Tr}\big(\sqrt{-\Delta}\, \gamma\big)}{D(\rho_{\gamma},\rho_{\gamma})-\Ex(\gamma)}
\end{split}
\end{equation}
has at least one optimizer, where $\mathcal{R}_{N}$ is defined in \eqref{rn}. Moreover, any optimizer $\gamma$ of \eqref{an} satisfying $\|\gamma\|=1$ commutes with the mean-field Hamiltonian
\begin{equation}\label{h81}
H^{*}_{\gamma}:=\sqrt{-\Delta}-\kappa^{*}_N\big(\rho_{\gamma}\ast|\cdot|^{-1}\big)(x) +\kappa^{*}_N\frac{\gamma(x,y)}{|x-y|} \ \ \ on \ \ L^2(\R^3).
\end{equation}
\end{lem}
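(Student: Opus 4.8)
The plan is to prove existence by the concentration--compactness method applied to the \emph{relaxed} problem over $\mathcal{R}_N$ (following the Lieb--Simon relaxation \cite{LieSim-77} and the finite-rank viewpoint of \cite{ii,FGL}), and then to extract the commutation relation from an infinitesimal unitary rotation of the optimizer. First I would use the two scaling invariances of the quotient in \eqref{an}: it is unchanged under $\gamma\mapsto t\gamma$ ($t>0$) and under the dilation $\gamma\mapsto\gamma_\beta$ with $\gamma_\beta(x,y)=\beta^3\gamma(\beta x,\beta y)$, for which $\Tr(\sqrt{-\Delta}\gamma_\beta)=\beta\Tr(\sqrt{-\Delta}\gamma)$, $D(\rho_{\gamma_\beta},\rho_{\gamma_\beta})=\beta D(\rho_\gamma,\rho_\gamma)$, $\Ex(\gamma_\beta)=\beta\Ex(\gamma)$ and $\|\gamma_\beta\|=\|\gamma\|$. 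Hence I may restrict to $\|\gamma\|=1$ and $\Tr(\sqrt{-\Delta}\gamma)=1$, so that $\kappa^{*}_N=2/S_N$ with $S_N:=\sup\{D(\rho_\gamma,\rho_\gamma)-\Ex(\gamma)\}$ over this set. Finiteness $S_N<\infty$ (i.e. $\kappa^{*}_N>0$) follows from \eqref{inter-bdd1}, the interpolation $\|\rho_\gamma\|_{L^{6/5}}\le\|\rho_\gamma\|_{L^1}^{1/2}\|\rho_\gamma\|_{L^{3/2}}^{1/2}$, the Sobolev bound $\|\rho_\gamma\|_{L^{3/2}}\lesssim\langle\sqrt{\rho_\gamma},\sqrt{-\Delta}\sqrt{\rho_\gamma}\rangle\le\Tr(\sqrt{-\Delta}\gamma)$, and $\|\rho_\gamma\|_{L^1}=\Tr\gamma\le N$, which together give $D(\rho_\gamma,\rho_\gamma)\lesssim N\Tr(\sqrt{-\Delta}\gamma)$; and $S_N>0$ (i.e. $\kappa^{*}_N<\infty$) holds already on a rank-two state with two separated bumps, while a rank-one state gives $D-\Ex=0$, consistent with $\kappa^{*}_1=+\infty$ in \eqref{1.12a}.

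Next I would take a maximizing sequence $\{\gamma_n\}\subset\mathcal{R}_N$ for $S_N$, translate each $\gamma_n$ to center its mass, and apply the concentration--compactness principle \cite{Lions1} to the densities $\rho_{\gamma_n}$, in the spirit of \cite[Lemma 7.3]{LenLew-10} (cf. Lemma \ref{lem:str-loc}). Vanishing is excluded since \cite[Lemma 7.2]{LenLew-10} would give $\|\sqrt{\rho_{\gamma_n}}\|_{L^p}\to 0$ for $2<p<3$, whence $0\le D-\Ex\le D\to 0$ by \eqref{inter-bdd1}, contradicting $S_N>0$. The decisive and, I expect, hardest step is to rule out dichotomy. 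Crucially, I cannot here invoke the strict monotonicity $\kappa^{*}_N<\kappa^{*}_{N-1}$, which is proved only later in Lemma \ref{nomin} using the present result, so a subadditivity-type inequality is unavailable without circularity. Instead I would argue by \emph{strict binding}: if the mass split into two clusters of ranks $N_1+N_2\le N$ escaping to mutual infinity, an IMS localization would produce two limit profiles $\gamma^{(1)},\gamma^{(2)}$ whose values of $D-\Ex$ add up to $S_N$ with vanishing cross terms; reassembling them at a large finite distance $R$ keeps the rank $\le N$, leaves the operator norm and kinetic energy essentially unchanged, renders the cross exchange negligible, yet creates a strictly positive cross direct term $\sim 2\big(\int\rho^{(1)}\big)\big(\int\rho^{(2)}\big)/R$, thereby raising $D-\Ex$ strictly above $S_N$, a contradiction. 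Hence only compactness survives: $\rho_{\gamma_n}\to\rho_\gamma$ strongly in $L^{6/5}$, so $D(\rho_{\gamma_n},\rho_{\gamma_n})\to D(\rho_\gamma,\rho_\gamma)$, while $\liminf\Ex(\gamma_n)\ge\Ex(\gamma)$ by Fatou and the kinetic energy is weakly lower semicontinuous; since $\mathrm{Rank}(\gamma)\le N$ passes to weak limits, $\gamma\in\mathcal{R}_N$ is an optimizer, with the constraints saturated (else its quotient would fall strictly below $\kappa^{*}_N$).

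For the commutation claim I would fix an optimizer $\gamma$ with $\|\gamma\|=1$ and perturb it by an infinitesimal unitary rotation $\gamma\mapsto e^{i\epsilon A}\gamma e^{-i\epsilon A}=\gamma+i\epsilon[A,\gamma]+O(\epsilon^2)$ with $A=A^{*}$ of finite rank. Such a rotation preserves the spectrum of $\gamma$ exactly, so $\|\gamma\|=1$, positivity and $\mathrm{Rank}(\gamma)\le N$ are all maintained, which conveniently bypasses the non-differentiability of the operator norm. With $V_\gamma:=\rho_\gamma\ast|\cdot|^{-1}$ the direct potential and $X_\gamma$ the exchange operator with kernel $\gamma(x,y)/|x-y|$, the first variations are $\delta\Tr(\sqrt{-\Delta}\gamma)=\Tr(\sqrt{-\Delta}\,\delta\gamma)$ and $\delta\big(D(\rho_\gamma,\rho_\gamma)-\Ex(\gamma)\big)=2\Tr\big((V_\gamma-X_\gamma)\delta\gamma\big)$. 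Using $\kappa^{*}_N=2\Tr(\sqrt{-\Delta}\gamma)/(D-\Ex)$ and cyclicity of the trace, stationarity of the quotient reduces to $\Tr\big(A\,[\gamma,H^{*}_\gamma]\big)=0$ with $H^{*}_\gamma$ exactly as in \eqref{h81}. Since $A=A^{*}$ is arbitrary and $i[\gamma,H^{*}_\gamma]$ is self-adjoint, this forces $[\gamma,H^{*}_\gamma]=0$, which is the assertion.

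As indicated, I expect the main obstacle to be the exclusion of dichotomy, because the usual inequality-based argument is barred by circularity; the proof must rest on the quantitative gain from the cross direct term, which demands a careful IMS localization controlling the kinetic, direct, and exchange localization errors simultaneously for rank-$\le N$ density matrices, together with a verification that the reassembled trial state still satisfies the operator-norm and rank constraints.
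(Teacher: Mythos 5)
Your normalization, the exclusion of vanishing, and the final commutator step (conjugation by $e^{i\epsilon A}$, stationarity of the quotient, and self-adjointness of $i[\gamma,H^{*}_{\gamma}]$) all match the paper's proof. The problem lies in the step you yourself single out as decisive: the exclusion of dichotomy by strict binding. Your reassembly argument takes the two escaping clusters $\gamma^{(1)},\gamma^{(2)}$ and glues them at finite distance $R$, claiming the result is again an admissible competitor of rank $\le N$. This is not guaranteed: if $\gamma_n=\sum_j\mu_j|u_j^n\rangle\langle u_j^n|$ has rank $N$ and each orbital $u_j^n$ itself splits between the two regions, then the localized pieces $\chi\gamma_n\chi$ and $\eta\gamma_n\eta$ each have rank up to $N$, so the reassembled state can have rank up to $2N$ and only competes for $\kappa^{*}_{2N}$, which yields no contradiction with the value $\kappa^{*}_{N}$. (The other worry you raise, controlling the cross exchange term against the $1/R$ gain in the direct term, can be fixed by cutting off to disjoint supports, since then the cross exchange vanishes identically; but the rank accounting cannot be fixed this way.)

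The paper's proof of Lemma \ref{than} shows that this entire difficulty is spurious: dichotomy does not need to be excluded at all. Since each localized piece $\gamma_n^{(1)},\gamma_n^{(2)}$ separately lies in $\mathcal{R}_N$, each separately satisfies the defining inequality $\|\cdot\|\Tr(\sqrt{-\Delta}\,\cdot)\ge\tfrac{\kappa^{*}_N}{2}(D-\Ex)$. Summing these two inequalities and comparing with the IMS splitting of the normalized quantities $\Tr(\sqrt{-\Delta}\gamma_n)=1$ and $D(\rho_{\gamma_n},\rho_{\gamma_n})-\Ex(\gamma_n)=2(\kappa^{*}_N)^{-1}$ produces a closed chain of inequalities beginning and ending at $1$, so every intermediate inequality is an equality; in particular the nontrivial local weak limit $\gamma=\lim\gamma_n^{(1)}$ saturates the inequality and is therefore already an optimizer, regardless of whether the remaining mass escapes. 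Since the lemma only asserts existence of an optimizer (not compactness of every minimizing sequence), this weaker conclusion suffices, and it also avoids the circularity with $\kappa^{*}_N<\kappa^{*}_{N-1}$ that you correctly identified. You should replace your strict-binding step with this observation; as written, your argument for ruling out dichotomy does not close.
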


\begin{proof} Since the proof follows from the standard method for calculus of variations, we only sketch its main points by two steps.

\medskip
\noindent	
{\bf Step 1.}
In this step, we prove that \eqref{an} has an optimizer. By scaling, one can choose a minimizing sequence $\{\gamma_{n}\}$ of \eqref{an} such that
\begin{equation}\label{a.7}
\mathrm{Tr}\big(\sqrt{-\Delta}\, \gamma_{n}\big)=\|\gamma_{n}\|=1,\ \   D(\rho_{\gamma_{n}},\rho_{\gamma_{n}})-\Ex(\gamma_{n}) = 2( \kappa^{*}_N)^{-1}>0.
\end{equation}
Since $\big\{\Tr ( \sqrt{-\Delta}\gamma_n)\big\}$ is bounded uniformly in $n>0$, up to a subsequence,  we have
$$(-\Delta)^{1/4}  \gamma_n (-\Delta)^{1/4} \rightharpoonup (-\Delta)^{1/4}  \gamma (-\Delta)^{1/4} \quad \text{ weakly-* in } \fS^1 \ \ \text{as} \ \ n\to\infty,
$$
where $\gamma \in \cR_N$ satisfies $\|\gamma\|\le 1$ and $\Tr (\sqrt{-\Delta}\gamma)\le 1$. Moreover, since $\{\sqrt{\rho_{\gamma_n}}\}$ is bounded uniformly in ${H}^{1/2}(\R^3)$, by Sobolev's embedding theorem we obtain that
$$
\rho_{\gamma_n}
\rightarrow \rho_\gamma\ \  \text{strongly\ in} \ L_{loc}^{p}(\R^3)\ \ \text{as}\ \ n\to\infty,  \ \ \forall\ 1\leq p<3/2.
$$
We now claim that the vanishing case of $\{\rho_{\gamma_{n}}\}$ does not occur, namely,
\begin{equation}\label{vani}
\liminf_{n\rightarrow \infty}\sup\limits_{z\in\R^3}\int_{B_1(z)} \rho_{\gamma_{n}}dx>0.
\end{equation}
Indeed, if \eqref{vani} were false, then the same argument of \eqref{eq:non-vanishing-HFB} gives that up to a subsequence,
$$0\leq \Ex(\gamma_{n}) \le D(\rho_{\gamma_{n}},\rho_{\gamma_{n}})\to 0\ \ \text{as}\ \ n\to\infty.
$$
This however contradicts with the assumption that $D(\rho_{\gamma_{n}},\rho_{\gamma_{n}})-\Ex(\gamma_{n}) = 2( \kappa^{*}_N)^{-1}>0$. The claim \eqref{vani} hence holds true.

As a consequence of the claim \eqref{vani},  up to a translation $\gamma_n(x,y)\mapsto \gamma_n(x-z_n,y-z_n)$ if necessary, we have $\gamma\ne 0$.
By the strongly local convergence (cf. \cite[Lemma 7.3]{LenLew-10}),   there exists a  sequence $\{R_n\}$ satisfying  $R_n\rightarrow\infty$ as $n\to\infty$  such that up to a subsequence  if necessary,
\begin{equation}\label{2.13}
0<\int_{\R^3}\rho_\gamma dx = \lim\limits_{n\to\infty}\int_{|x|\leq R_n}\rho_{{\gamma}_n}dx,\quad \lim\limits_{n\to\infty}\int_{R_n\leq|x|\leq 2R_n}\rho_{{\gamma}_n}dx=0.
\end{equation}
Let $\chi_{R_n}^2+\eta_{R_n}^2=1$ be as in \eqref{eq:def-chiR-etaR},  and define $\gamma_n^{(1)}$ and $\gamma_n^{(2)}$ to be  as in \eqref{eq:gamman1-gamman2}. It  follows from \eqref{2.13} that  $\gamma_n^{(1)}\to \gamma$ strongly in $\fS^1$ as $n\to\infty$, and thus $\|\rho_{\gamma_n^{(1)}}-\rho_\gamma\|_{L^p}\to 0$ as $n\to\infty$ for all $1\le p<3/2$. Using the IMS localization formulas  and the Hardy--Littlewood--Sobolev inequality,  the same arguments of \eqref{eq:HFB-splitting-E} and  \eqref{3.15} then give that
\begin{align}\label{2.20}
\mathrm{Tr}\big(\sqrt{-\Delta}\, {\gamma}_n\big)
&= \mathrm{Tr}\big(\sqrt{-\Delta}\, \gamma_n^{(1)}\big)+\mathrm{Tr}\big(\sqrt{-\Delta}\, \gamma_n^{(2)}\big)
+o(1)  \nn\\
&\ge \mathrm{Tr}\big(\sqrt{-\Delta}\, {\gamma}\big)+\mathrm{Tr}\big(\sqrt{-\Delta}\, \gamma_n^{(2)}\big)
+o(1)\ \ \text{as}\ \ n\to\infty, \\
\Ex(\gamma_n) 
&\ge  \Ex(\gamma_n^{(1)})+  \Ex(\gamma_n^{(2)}) \ge \Ex(\gamma)+  \Ex(\gamma_n^{(2)}) + o(1)\ \ \text{as}\ \ n\to\infty,\nn\\
D(\rho_{\gamma_n},\rho_{\gamma_n}) 
&= D(\rho_{\gamma_n^{(1)}},\rho_{\gamma_n^{(1)}}) +  D(\rho_{\gamma_n^{(2)}},\rho_{\gamma_n^{(2)}}) + o(1) \nn\\
&=  D(\rho_{\gamma},\rho_{\gamma}) + D(\rho_{\gamma_n^{(2)}},\rho_{\gamma_n^{(2)}})  + o(1)\ \ \text{as}\ \ n\to\infty,\nn
\end{align}
where the first identity of $D(\rho_{\gamma_n},\rho_{\gamma_n})$ follows from the fact that 
$$
\|\rho_{{\gamma}_{n}}\|_{L^{p}(B_{2R_n}\backslash B_{R_n})}\to 0\ \  \ \text{as}\ \ n\to\infty,\ \ \forall\ 1\le p<3/2.
$$

We now conclude from \eqref{an}, \eqref{a.7} and \eqref{2.20}  that
\begin{align*}
1&=\ \mathrm{Tr}\big(\sqrt{-\Delta}\, {\gamma}_n\big) \geq\ \mathrm{Tr}\big(\sqrt{-\Delta}\, \gamma\big)+\mathrm{Tr}\big(\sqrt{-\Delta}\, \gamma_n^{(2)}\big)
+o(1)\\
&\geq\ \|\gamma\|\, \mathrm{Tr}\big(\sqrt{-\Delta}\, \gamma\big)+\|{\gamma}_{n}^{(2)}\|\, \mathrm{Tr}\big(\sqrt{-\Delta}\, {\gamma}_{n}^{(2)}\big)
+o(1)\\
&\geq\ \frac{\kappa_N^*}{2} \Big(D(\rho_\gamma,\rho_\gamma)-\Ex(\gamma)\Big) +\frac{\kappa_N^*}{2} \Big(D(\rho_{{\gamma}_{n}^{(2)}}, \rho_{{\gamma}_{n}^{(2)}})-\Ex({\gamma}_{n}^{(2)}) \Big)	+o(1)\\
&\geq \frac{\kappa^{*}_N}{2}\Big(D(\rho_{\gamma_n}, \rho_{\gamma_n})
-\Ex(\gamma_n) \Big)+o(1) = 1+ o(1)\ \ \, \text{as}\ \ n\to \infty,
\end{align*}
where we  have used the facts that
$$\max\big\{\|\gamma_n^{(1)}\|, \|\gamma_n^{(2)}\| \big\}  \le  \|{\gamma}_n\| =1\ \ \mbox{and} \ \ \|\gamma\|=\lim_{n\to\infty} \|\gamma_n^{(1)}\|\le 1.$$
Consequently, we obtain that
$$ \|\gamma\|\, \mathrm{Tr}\big(\sqrt{-\Delta}\, \gamma\big)=\frac{\kappa_N^*}{2} \Big[D(\rho_\gamma,\rho_\gamma)-\Ex(\gamma)\Big].$$
This therefore yields that $\gamma$ is an optimizer of the variational problem \eqref{an}, and Step 1 is proved.

\medskip
\noindent	
{\bf Step 2.} In this step, we prove that $\gamma $ commutes with the mean-field operator $H^{*}_{\gamma }$ of \eqref{h81}. To address it, replacing $\gamma$ by  $\gamma/\|\gamma\|$ if necessary, one can suppose that $\gamma$ is an optimizer of \eqref{an} satisfying $\|\gamma\|=1$. Define
$$
\gamma(t):=e^{itA}\gamma e^{-itA},\ \ \text{where}\ t\in\R,\ A=|\varphi\rangle\langle\varphi| \ \text{and}\ \varphi\in C_{0}^\infty(\R^3),
$$
so that $\|\gamma(t)\|=\|\gamma\|=1$ and $\gamma(t)\in\mathcal{R}_{N}$. Note that
$$\gamma(t)=\gamma+tB+o(t)\ \ \text{as}\ \  t\to0, \ \ B=i\big(A\gamma-\gamma A\big).$$
Since $\gamma$ is an optimizer of \eqref{an}, we can further get that
$$
\frac{d}{dt} \Big(\|\gamma(t)\|\mathrm{Tr}\big(\sqrt{-\Delta}\,  \gamma(t)\big) - \frac{\kappa_N^*}{2} \Big[ D\big(\rho_{\gamma(t)},\rho_{\gamma(t)}\big)-\Ex(\gamma(t)) \Big] \Big)\Big|_{t=0} =0,
$$
which implies that
\begin{align}\label{2.22a}
\mathrm{Tr}\big(H^{*}_{\gamma}\,  B\big)=i\kappa^{*}_N \Im \int_{\R^{3}}\int_{\R^{3}}\frac{\gamma(x,y)\, \overline{B(x,y)}}{|x-y|}dxdy.
\end{align}
Moreover, since $\gamma, A$ and $B$ are self-adjoint, one can check that
\begin{align*}
&\Im \int_{\R^{3}}\int_{\R^{3}}\frac{\gamma(x,y)\, \overline{B(x,y)}}{|x-y|}dxdy\\
&=\text{Re} \int_{\R^{3}}\int_{\R^{3}}\frac{\gamma(x,y)\big(A\gamma-\gamma A\big)(y,x)}{|x-y|}dxdy\\
&=\text{Re}\int_{\R^{3}}\int_{\R^{3}}\frac{\overline{\gamma(y,x)}\cdot  \overline{\big(\gamma A\big)(x,y)}}{|x-y|}dxdy-\text{Re}\int_{\R^{3}}\int_{\R^{3}}\frac{\gamma(x,y)\big(\gamma A\big)(y, x)}{|x-y|}dxdy\nonumber\\
&=\text{Re}\int_{\R^{3}}\int_{\R^{3}}\frac{\overline{\gamma( x, y)\big(\gamma A\big)(y, x)}}{|x-y|}dxdy-\text{Re}\int_{\R^{3}}\int_{\R^{3}}\frac{\gamma(x,y)\big(\gamma A\big)(y, x)}{|x-y|}dxdy=0.
\end{align*}
This further gives from \eqref{2.22a} that $\mathrm{Tr}\big(H^{*}_{\gamma}\,  B\big)=0$.  Recalling  $A=|\varphi\rangle\langle\varphi|$, we thus deduce that
$$
\Big\langle \varphi,\  i\big(\gamma H^{*}_{\gamma}-H^{*}_{\gamma}\gamma\big)\varphi\Big\rangle=0,\quad \forall \ \varphi\in C_c^\infty(\R^3).
$$
Since $i\big(\gamma H_{\gamma}-H_{\gamma}\gamma\big)$ is self-adjoint, we obtain $[H^{*}_{\gamma},\gamma]=0$. The proof of
Lemma \ref{than} is therefore complete.
\end{proof}

\linespread{1.0}

\subsection{From strict monotonicity to projection property} \label{sec:43} 
In this subsection, we shall prove that any optimizer $\gamma $ of \eqref{an} satisfying $\|\gamma \|=1$ must be a rank-$N$ projection under the following strict inequality
$$\kappa_N^* < \kappa_{N-1}^{*}.$$
We then prove that the above strict inequality  naturally holds true. More precisely, in this subsection we shall prove the following key property.

\begin{lem}\label{than1} Set $\kappa_1^*:=\infty$, and let $\kappa_N^*>0$ be defined by \eqref{an} for $2\leq N\in\mathbb{N}$. If $\kappa_N^*<\kappa_{N-1}^*$, then
any optimizer $\gamma $ of \eqref{an} satisfying $\|\gamma \|=1$ can be written as $\gamma =\sum_{j=1}^{N}|w_{j}\rangle\langle w_{j}|$, where the orthonormal functions $\{w_j\}_{j=1}^N$ are eigenfunctions of $H_{\gamma }^*$ in \eqref{h81} associated to the eigenvalues $\nu_{1}\le \cdots \le \nu_N<0$.
\end{lem}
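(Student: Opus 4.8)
The plan is to translate the constrained ratio problem \eqref{an} into an unconstrained energy minimization and then exploit a special \emph{affine} behaviour of that energy along single-orbital occupation changes. Fix an optimizer $\gamma$ with $\|\gamma\|=1$, normalized as in \eqref{a.7} so that $\Tr(\sqrt{-\Delta}\,\gamma)=1$ and $D(\rho_\gamma,\rho_\gamma)-\Ex(\gamma)=2(\kappa_N^*)^{-1}$, and introduce
\[
\cF(\gamma'):=\Tr\big(\sqrt{-\Delta}\,\gamma'\big)-\frac{\kappa_N^*}{2}\big(D(\rho_{\gamma'},\rho_{\gamma'})-\Ex(\gamma')\big),
\]
whose Fréchet derivative is exactly the mean-field operator $\cF'(\gamma')=H^*_{\gamma'}$ of \eqref{h81} (this is the computation already performed in Step 2 of Lemma \ref{than}). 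The inequality defining $\kappa_N^*$ gives, for every $\gamma'\in\cR_N$ with $\|\gamma'\|\le1$, the bound $\cF(\gamma')\ge(1-\|\gamma'\|)\Tr(\sqrt{-\Delta}\,\gamma')\ge0$, while $\cF(\gamma)=0$. Thus $\gamma$ is a global minimizer of $\cF$ over $\cC_N:=\{\gamma'\in\cR_N:\|\gamma'\|\le1\}$, and — since $\Tr(\sqrt{-\Delta}\,\gamma')>0$ for any nonzero $\gamma'\ge0$ — any nonzero $\gamma'\in\cC_N$ with $\cF(\gamma')=0$ is forced to satisfy $\|\gamma'\|=1$ together with $D(\rho_{\gamma'},\rho_{\gamma'})-\Ex(\gamma')=\tfrac{2}{\kappa_N^*}\Tr(\sqrt{-\Delta}\,\gamma')>0$, i.e. $\gamma'$ again realizes the ratio $\kappa_N^*$ in \eqref{an}.

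Because $[\gamma,H^*_\gamma]=0$ by Lemma \ref{than}, I would simultaneously diagonalize the two commuting self-adjoint operators on the finite-dimensional range of $\gamma$, writing $\gamma=\sum_{j=1}^M n_j|w_j\rangle\langle w_j|$ with $\langle w_i,w_j\rangle=\delta_{ij}$, $0<n_j\le1$, $M\le N$, and $H^*_\gamma w_j=\mu_j w_j$. The key observation is that for a rank-one direction $\delta=|w_k\rangle\langle w_k|$ one has $\rho_\delta=|w_k|^2$ and the pointwise identity $\Ex(\delta)=D(\rho_\delta,\rho_\delta)$, so the quadratic part of $\cF$ contributes \emph{nothing} and $\cF$ is affine along this direction:
\[
\cF(\gamma+s\delta)=\cF(\gamma)+s\,\Tr\big(H^*_\gamma\delta\big)=s\,\mu_k .
\]
Minimality of $\gamma$ over $\cC_N$ then forces $\mu_k=0$ whenever $0<n_k<1$ (both signs of $s$ are admissible for small $s$) and $\mu_k\le0$ whenever $n_k=1$.

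The decisive step, where the hypothesis $\kappa_N^*<\kappa_{N-1}^*$ enters, is to exclude any occupied orbital with $\mu_k=0$. First, the positivity $D(\rho_\gamma,\rho_\gamma)-\Ex(\gamma)=2(\kappa_N^*)^{-1}>0$ forces $M\ge2$, since a rank-one operator satisfies $D(\rho,\rho)=\Ex$. Now suppose some occupied $w_k$ has $\mu_k=0$. Setting $\gamma':=\gamma-n_k|w_k\rangle\langle w_k|\in\cR_{N-1}$, which is nonzero because $M\ge2$, the affine identity gives $\cF(\gamma')=\cF(\gamma)=0$. By the first paragraph this forces $\|\gamma'\|=1$ and makes $\gamma'$ realize the ratio $\kappa_N^*$ with $D(\rho_{\gamma'},\rho_{\gamma'})-\Ex(\gamma')>0$; but $\gamma'\in\cR_{N-1}$, so $\kappa_{N-1}^*\le\kappa_N^*$, contradicting the assumption (when $M=2$ the contradiction is even more immediate, since then $\gamma'$ is rank-one and $D(\rho_{\gamma'},\rho_{\gamma'})-\Ex(\gamma')=0$). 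Hence no occupied orbital has $\mu_k=0$; in particular every fractional occupation is excluded, so all $n_j=1$ and $\gamma$ is a projection, with each occupied eigenvalue strictly negative, $\mu_j<0$. The same comparison applied to $\gamma$ itself shows $M$ cannot be $\le N-1$ (else $\gamma\in\cR_{N-1}$ would give $\kappa_{N-1}^*\le\kappa_N^*$), so $M=N$ and $\gamma=\sum_{j=1}^N|w_j\rangle\langle w_j|$ with eigenvalues $\nu_1\le\cdots\le\nu_N<0$, as claimed.

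The main obstacle is recognizing and exploiting this affine structure. The cancellation $\Ex(\delta)=D(\rho_\delta,\rho_\delta)$ along rank-one directions is precisely what upgrades a first-order stationarity condition into an \emph{exact} energy identity, allowing an orbital to be deleted at zero energy cost; it is this mechanism that converts the strict monotonicity $\kappa_N^*<\kappa_{N-1}^*$ from \eqref{1.12a} into the projection property. Without it, fractional occupations and vanishing mean-field eigenvalues could not be ruled out by a purely first-order argument.
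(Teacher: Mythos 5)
Your argument is correct and rests on the same mechanism as the paper's own proof: the exact quadratic expansion of $D-\Ex$ around the optimizer (the paper's \eqref{2.35cb}), whose second-order term cancels along rank-one directions because $\Ex(\delta)=D(\rho_\delta,\rho_\delta)$ for $\delta=|w\rangle\langle w|$, combined with a comparison against the rank-$(N-1)$ problem to exploit $\kappa_N^*<\kappa_{N-1}^*$. The differences are purely organizational: the paper first proves $\nu_j<0$ by deleting an orbital and then excludes fractional occupation by an upward perturbation, whereas you obtain $\mu_k=0$ for fractionally occupied orbitals from a two-sided stationarity condition and then eliminate it by the deletion argument, along the way making explicit the claim $\mathrm{Rank}\,\gamma=N$ that the paper asserts without detail.
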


\linespread{-0.01}

\begin{proof}
Let $\gamma $ be an optimizer of \eqref{an} satisfying $\|\gamma \|=1$. The assumption condition $\kappa^{*}_N<\kappa_{N-1}^{*}$ implies that ${\rm Rank} \gamma =N$. Following the fact that $[H^{*}_{\gamma },\gamma ]=0$, where $H^{*}_{\gamma }$ is defined by (\ref{h81}), then we can write
\begin{equation}\label{2.21a}
\gamma =\sum_{j=1}^{{N}}\mu_{j}|w_{j}\rangle\langle w_{j}|,\ \ \mu_{j}\in(0, 1],\ \  \langle w_{j}, w_{k}\rangle=\delta_{jk},
\end{equation}
where  $w_{1}, \cdots , w_N$ are eigenfunctions of $H^{*}_{\gamma }$ associated to the eigenvalues $\nu_{1}\le \cdots \le \nu^{*}_{{N}}$.  Following (\ref{2.21a}), the rest proof of Lemma \ref{than1} is to prove that both $\mu_{j}=1$ and $\nu^{*}_{j}<0$ hold for all $j= 1,\cdots, N$.

For every non-negative self-adjoint operator $(\gamma'-\gamma)$, we have
$$\Ex (\gamma'-\gamma) \le D(\rho_{\gamma'-\gamma},\rho_{\gamma'-\gamma}),$$
and hence
\begin{align}\label{2.35cb}
&D(\rho_{\gamma'},\rho_{\gamma'})-\Ex(\gamma')\nonumber\\
&=\, D(\rho_{\gamma },\rho_{\gamma })-\Ex\big(\gamma \big)+2\Tr \Big[ \Big( \rho_\gamma* |x|^{-1} - \frac{\gamma(x,y)}{|x-y|}\Big) (\gamma'-\gamma) \Big] \nn\\
&\quad + D(\rho_{\gamma'-\gamma},\rho_{\gamma'-\gamma})-\Ex(\gamma'-\gamma)\\
&=\, 2(\kappa^{*}_N)^{-1}\mathrm{Tr}\big(\sqrt{-\Delta}\, \gamma \big)+ 2(\kappa^{*}_N)^{-1} \left[ \Tr \big(\sqrt{-\Delta}(\gamma'-\gamma)\big) - \Tr \big(H_{\gamma }^* (\gamma'-\gamma)\big) \right]\nn\\
&\quad + D(\rho_{\gamma'-\gamma},\rho_{\gamma'-\gamma})-\Ex(\gamma'-\gamma)\nn\\
&\ge 2(\kappa^{*}_N)^{-1}\left[ \mathrm{Tr}\big(\sqrt{-\Delta}\,\gamma' \big) - \Tr \big(H_{\gamma }^* (\gamma'-\gamma)\big)  \right].\nn
\end{align}
Define
\begin{align*}\label{2.33a}
\gamma':=\gamma -\mu_{j}|w_{j}\rangle\langle w_{j}|,\ \ \|\gamma'\|\le 1,\ \ {\rm Rank} (\gamma')=N-1.
\end{align*}
We now claim that 
\begin{equation}\label{4:2.33a}
\nu_j<0\ \  \mbox{holds for all }\ j= 1,\cdots, N.
\end{equation}  
Indeed, if $N=2$, then $D(\rho_{\gamma'},\rho_{\gamma'})-\Ex(\gamma')=0$. This implies from \eqref{2.35cb} that
$$
0 \ge \mathrm{Tr}\big(\sqrt{-\Delta}\,\gamma' \big)  - \Tr \big(H_{\gamma }^* (\gamma'-\gamma )\big) = \mathrm{Tr}\big(\sqrt{-\Delta}\,\gamma' \big)  +\nu_j \mu_j,
$$
and hence
$$\nu_j\le - \mu_j^{-1}\mathrm{Tr}\big(\sqrt{-\Delta}\,\gamma' \big) <0,\ \  j= 1, 2.$$
If $N\ge 3$, then following the assumption $\kappa^{*}_N <\kappa^{*}_{N-1}$, we deduce from  \eqref{2.35cb} that
\begin{align*} 
0<\kappa^{*}_N <\kappa^{*}_{N-1}
\leq\frac{2\|\gamma'\|\, \mathrm{Tr}\big(\sqrt{-\Delta}\, \gamma'\big)}{D(\rho_{\gamma'},\rho_{\gamma'})-\Ex(\gamma')}
\le \kappa^{*}_N\frac{\mathrm{Tr}\big(\sqrt{-\Delta}\,  \gamma'\big)}
{\mathrm{Tr}\big(\sqrt{-\Delta}\,  \gamma'\big)+\nu_j \mu_j } ,
\end{align*}
which also implies that $\nu_j<0$ holds for all $j= 1,\cdots, N$. This proves the claim (\ref{4:2.33a}).

On the contrary, assume that the coefficient $\mu_j<1$ of (\ref{2.21a}) holds for some $j\in \{1,\cdots,N\}$. Applying  \eqref{2.35cb} with
$$\gamma': =\gamma +\varepsilon|w_{j}\rangle\langle w_{j}|,\ \ 0<\varepsilon<1-\mu_j,$$
we then obtain that
\begin{equation*}
\begin{split}
\kappa^{*}_N&\leq\frac{2\mathrm{Tr}(\sqrt{-\Delta} \gamma')}{D(\rho_{\gamma'},\rho_{\gamma'})-\Ex(\gamma')}\le \kappa^{*}_N\frac{\mathrm{Tr}\big(\sqrt{-\Delta} \gamma'\big)}
{\mathrm{Tr}\big(\sqrt{-\Delta} \gamma'\big)-\varepsilon\nu_{j}}<\kappa^{*}_N,
\end{split}
\end{equation*}
where the last inequality  follows from the fact that $\nu_j<0$  holds for all $j= 1,\cdots, N$.  This is however a contradiction, and hence $\mu_{j}=1$ holds for all $j=1,\cdots,N$. This therefore completes the proof of Lemma \ref{than1}.
\end{proof}

Inspired by the existence-nonexistence alternative of Lemma \ref{lem:HFB-alternative}, the following lemma illustrates that the  strict inequality $\kappa_{N-1}^{*}>\kappa^{*}_N$ of  Lemma \ref{than1} holds in a natural way.

\vspace{3mm}
\begin{lem}\label{nomin}
For any fixed $m>0$ and  $ 2\leq N\in\mathbb{N}$,  let $I^{\rm HF}_{m,\kappa^{\rm HF}_N}(N)$  and  $\kappa^{*}_N>0$ be defined by \eqref{problem} and \eqref{an}, respectively, where we denote $\kappa_1^*=\infty$. Then $\kappa_{N-1}^{*}>\kappa^{*}_N$ and $I^{\rm HF}_{m,\kappa^{*}_N}(N)=-mN$, but $I^{\rm HF}_{m,\kappa^{*}_N}(N)$ has no minimizer.
\end{lem}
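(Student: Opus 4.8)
The plan is to transplant the existence--nonexistence alternative of Lemma \ref{lem:HFB-alternative} to the Hartree--Fock setting and then run an induction on $N$, feeding in the subcritical existence result of \cite{LenLew-10} to force the strict monotonicity. I will work throughout with $\kappa_N^*$ of \eqref{an} and invoke Lemma \ref{than1} to upgrade optimizers to rank-$N$ projections only when the relevant strict inequality is already available.

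First I would record the massless scaling. For $\gamma=\sum_{j=1}^N|u_j\rangle\langle u_j|\in\cP_N$ and $\beta>0$, set $u_j^\beta(x)=\beta^{3/2}u_j(\beta x)$, so that $\gamma_\beta:=\sum_j|u_j^\beta\rangle\langle u_j^\beta|\in\cP_N$ has kernel $\beta^3\gamma(\beta x,\beta y)$ and $\cE^{\rm HF}_{0,\kappa}(\gamma_\beta)=\beta\,\cE^{\rm HF}_{0,\kappa}(\gamma)$. Since $D(\rho_\gamma,\rho_\gamma)-\Ex(\gamma)\ge 0$ by \eqref{inter-bdd1}, the definition of $\kappa_N^*$ gives $\cE^{\rm HF}_{0,\kappa}\ge 0$ on $\cP_N$ whenever $\kappa\le\kappa_N^*$; letting $\beta\to0$ then yields $I^{\rm HF}_{0,\kappa}(N)=0$ for $\kappa\le\kappa_N^*$. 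I would then establish the \emph{HF alternative}: if $I^{\rm HF}_{0,\kappa}(N)$ has a minimizer $\gamma_0$, then for every $m>0$ one has $I^{\rm HF}_{m,\kappa}(N)=-mN$ with no minimizer. The argument copies Lemma \ref{lem:HFB-alternative}: writing $B_m=\sqrt{-\Delta+m^2}-\sqrt{-\Delta}>0$, one has $\cE^{\rm HF}_{m,\kappa}(\gamma)=\cE^{\rm HF}_{0,\kappa}(\gamma)+\Tr(B_m\gamma)-mN$; concentrating $\gamma_0$ via $\beta=n\to\infty$ makes $\Tr(B_{m,n}\gamma_0)\to0$ by dominated convergence (as $B_{m,n}\le m$ and $\gamma_0$ is trace class), giving the upper bound $I^{\rm HF}_{m,\kappa}(N)\le -mN$, while $\cE^{\rm HF}_{0,\kappa}\ge0$ and $B_m\ge0$ give the matching lower bound. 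Finally the strict positivity and trivial kernel of $B_m$ rule out a minimizer, since $\Tr(B_m\gamma)=0$ together with $\gamma\ge0$ would force $\gamma=0$, contradicting $\Tr\gamma=N$.

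Next I would prove $\kappa_N^*<\kappa_{N-1}^*$ by induction on $N\ge2$, noting first that $\kappa_N^*\le\kappa_{N-1}^*$ always holds because $\cR_{N-1}\subset\cR_N$. The base case $N=2$ is immediate: $\kappa_2^*<\infty=\kappa_1^*$, with finiteness supplied by the existence of an optimizer in Lemma \ref{than}. For the inductive step ($N\ge3$), assume $\kappa_{N-1}^*<\kappa_{N-2}^*$. By Lemma \ref{than1} the optimizer of $\kappa_{N-1}^*$ is then a rank-$(N-1)$ projection, so $I^{\rm HF}_{0,\kappa_{N-1}^*}(N-1)$ has a minimizer. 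Suppose, for contradiction, that $\kappa_N^*=\kappa_{N-1}^*=:\kappa_0$. On the one hand, the HF alternative applied at level $N-1$ shows $I^{\rm HF}_{m,\kappa_0}(N-1)$ has no minimizer for any $m>0$. On the other hand, $\kappa_N^*=\kappa_0$ forces $I^{\rm HF}_{0,\kappa_0}(N)=0>-\infty$, hence $N^{\rm HF}_{\kappa_0}\ge N$ and in particular $2\le N-1<N^{\rm HF}_{\kappa_0}$; the subcritical existence theorem \cite[Theorem 4]{LenLew-10} then produces a minimizer for $I^{\rm HF}_{m,\kappa_0}(N-1)$, a contradiction. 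Thus $\kappa_N^*<\kappa_{N-1}^*$, closing the induction.

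Finally, for each $N\ge2$ the inequality $\kappa_N^*<\kappa_{N-1}^*$ just proved lets me invoke Lemma \ref{than1} at level $N$: the optimizer of $\kappa_N^*$ is a rank-$N$ projection, so $I^{\rm HF}_{0,\kappa_N^*}(N)=0$ is attained. Applying the HF alternative at $\kappa=\kappa_N^*$ then gives $I^{\rm HF}_{m,\kappa_N^*}(N)=-mN$ with no minimizer, which is the remaining assertion. The main obstacle is to keep the induction non-circular: the alternative at level $N-1$ may only be used once Lemma \ref{than1} has upgraded the optimizer of $\kappa_{N-1}^*$ to a projection, and this needs precisely the inductive hypothesis $\kappa_{N-1}^*<\kappa_{N-2}^*$; the other delicate point is verifying that $N-1$ sits \emph{strictly} below the critical particle number $N^{\rm HF}_{\kappa_0}$, so that \cite[Theorem 4]{LenLew-10} is genuinely applicable and the contradiction is legitimate.
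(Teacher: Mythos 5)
Your proposal is correct and follows essentially the same route as the paper: induction on $N$, with Lemma \ref{than1} upgrading the optimizer of $\kappa_{N-1}^*$ to a rank-$(N-1)$ projection, a scaling/dominated-convergence argument giving the HF analogue of Lemma \ref{lem:HFB-alternative} (the value $-mN$ and nonexistence via strict positivity of $B_m$), and \cite[Theorem 4]{LenLew-10} to force strict monotonicity. The only differences are organizational: you package the alternative as a standalone step and phrase the inductive step as a contradiction from $\kappa_N^*=\kappa_{N-1}^*$, whereas the paper derives $I^{\rm HF}_{m,\kappa_{N-1}^*}(N)=-\infty$ directly and compares it with $I^{\rm HF}_{m,\kappa_N^*}(N)>-\infty$ — logically the same argument.
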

\linespread{0.7}
\begin{proof}
By induction on $N$, we first  prove that the case $N=2$ holds true. In this case, we always have $\kappa_2^*<\kappa_1^*=\infty$. This then implies from Lemma \ref{than1} that the variational problem $\kappa_2^*$ of \eqref{an}  has a minimizer $\gamma_2=\sum_{j=1}^{2}|w_{j}\rangle\langle w_{j}|$,  where $\langle w_{j},w_{k}\rangle=\delta_{jk}$ holds for $j, k=1,2$. 
Denote $\gamma_{2n}(x,y):=n^3 \gamma_2(nx,ny)$ for any $n>0$. One  can then verify  that
$$
0 = I^{\rm HF}_{0,\kappa_2^*}(2) =\mathcal{E}^{\rm HF}_{0,\kappa_2^*}(\gamma_{2})=\frac{1}{n} \mathcal{E}^{\rm HF}_{0,\kappa_2^*}(\gamma_{2n}).
$$
By the Dominated Convergence Theorem,  we further conclude that
\begin{equation}\label{2.32}
\begin{split}
0 &\leq  I^{\rm HF}_{m, \kappa^*_{2}}(2) + 2m \leq \mathcal{E}^{\rm HF}_{m,\kappa_2^*}(\gamma_{2n}) + 2m\\[2mm] &=\Tr\Big[\big(\sqrt{-\Delta+m^{2}}-\sqrt{-\Delta}\, \big) \gamma_{2n}\Big] \\
&=\sum_{j=1}^{2}\int_{\R^3}\big(\sqrt{n^{2}|\xi|^{2}+m^{2}}-\sqrt{n^{2}|\xi|^{2}}\, \big)|\widehat{w}_{j}(\xi)|^{2} d\xi\\
&= \sum_{j=1}^{2}\int_{\R^3} \frac{|\xi|^2}{ \sqrt{n^{2}|\xi|^{2}+m^{2}}+ \sqrt{n^{2}|\xi|^{2}}} |\widehat{w}_{j}(\xi)|^{2} d\xi \to 0\ \ \mbox{as}\,\ n\to \infty,
\end{split}\end{equation}
which  yields that $I^{\rm HF}_{m,\kappa_{2}^{*}}(2)=-2m$. On the other hand,  since $\mathcal{E}^{\rm HF}_{0,\kappa_2^*}(\gamma) \ge I^{\rm HF}_{0,\kappa_2^*}(2)=0$ for every $\gamma\in \cP_2$,  we deduce that 
$$
\mathcal{E}^{\rm HF}_{m,\kappa_2^*}(\gamma) \ge \mathrm{Tr}\big(\sqrt{-\Delta+m^{2}}\, \gamma\big) - \mathrm{Tr}\big(\sqrt{-\Delta}\, \gamma\big) -2m>-2m,\,  \ \forall \ \gamma\in \cP_2.
$$
Therefore, $I^{\rm HF}_{m,\kappa_{2}^{*}}(2)$ has no minimizer for any $m>0$. Therefore,  the case $N=2$ holds true.

Since $I^{\rm HF}_{m,\kappa_{2}^{*}}(2)$ has no minimizer, we deduce from \cite[Theorem 4]{LenLew-10} that
\begin{equation}\label{2.33}
I^{\rm HF}_{m,\kappa_{2}^{*}}(3)=-\infty.
\end{equation}
On the other hand, the definition of $\kappa_{3}^{*}$ in \eqref{an}  gives that
$I^{\rm HF}_{m,\kappa_{3}^{*}}(3)\geq -3m>-\infty$. This then   implies from  \eqref{2.33} that
\begin{equation}\label{2.34}
\kappa_{2}^{*}>\kappa_{3}^{*}.
\end{equation}
As a consequence of \eqref{2.34}, we obtain from Lemma \ref{than1}  that $\kappa_3^*$ admits an optimizer of the form  $\gamma_3=\sum_{j=1}^{3}|w_{j}\rangle\langle w_{j}|$, where $\langle w_{j},w_{k}\rangle=\delta_{jk}$. Following the above argument of the case $N=2$, we then get that
$$
I^{\rm HF}_{m,\kappa_{3}^{*}}(3)=-3m,  \ \ \text{but\ $I^{\rm HF}_{m,\kappa_{3}^{*}}(3)$\ has\ no\ minimizer},
$$
and hence  the case $N=3$ also holds true.

Repeating the above procedure, by induction, one can obtain that for all $2\leq N\in\mathbb{N}$,
$$
\kappa_{N-1}^{*}>\kappa^{*}_N,\ \ \  I^{\rm HF}_{m,\kappa^{*}_N}(N)=-mN\  \ \text{has no minimizer}.$$
This therefore completes the proof of Lemma \ref{nomin}.
\end{proof}

\subsection{Proofs of Theorems \ref{th2.1} and \ref{cor1}} \label{sec:44}
This subsection is devoted to the proofs of Theorems \ref{th2.1} and \ref{cor1}. We first address Theorem \ref{th2.1} as follows.

\linespread{0.5}

\begin{proof}[Proof of Theorem \ref{th2.1}]  We shall carry out the proof by the following three steps:
	
\medskip
\noindent	
{\bf Step 1.} In this step, we prove Theorem  \ref{th2.1} (1), i.e., the sequence $\{\kappa^{\rm HF}_N\}_{N\ge 2}$ is strictly decreasing, and  $\kappa^{\rm HF}_N\sim \tau_c N^{-2/3}$ as $N\to \infty$, where $\tau_c>0$ is given in \eqref{eq:def-tauc}.

As a consequence of Lemmas \ref{than1} and \ref{nomin}, we can obtain that any optimizer $\gamma $ of $\kappa_N^*$ in \eqref{an} satisfying $\big\|\gamma \big\|=1$ must be an $N$-dimensional orthogonal projection. This thus implies that
\begin{equation}\label{4.16a}
\kappa_N^*=\kappa_N^{\rm HF},\ \  N\mapsto \kappa_N^{\rm HF}\ \text{is strictly decreasing},
\end{equation}
where $\kappa_N^{\rm HF}$ is defined in \eqref{eq:GN-HF}. Denote by $N^{\rm HF}_\kappa$ the largest integer such that $I^{\rm HF}_{m,\kappa}(N) > -\infty$. We then obtain from \eqref{4.16a} that the following fixed-point constraint holds:
$$
N=N^{\rm HF}_{\kappa^{\rm HF}_N},\ \ \ \forall \  2\le N\in \mathbb{N}.
$$
In particular, since $N^{\rm HF}_{\kappa}\to \infty$ as $\kappa\searrow 0$, we conclude that $\kappa_N^{\rm HF}\searrow 0$ as $N\to \infty$.

We now claim that
\begin{align}\label{3.67a}
\lim\limits_{\kappa\rightarrow 0^{+}}\kappa^{3/2}N^{\rm HF}_\kappa=\tau_{c}^{3/2}>0,
\end{align}
where $\tau_{c}\simeq 2.677$ is defined in \eqref{eq:def-tauc}. To prove \eqref{3.67a}, stimulated by  \cite[Appendix B]{LenLew-10}, we  consider the energy functional
$$
\cE^{\rm red}_{\kappa}(\gamma)=\Tr\big(\sqrt{-\Delta}\, \gamma\big)-\frac{\kappa}{2}D(\rho_\gamma, \rho_\gamma),\ \   \kappa\in (0,4/\pi).
$$
Since the map $f\mapsto D(f, f)$ is strictly convex, the same argument of \cite[Lemma 11]{ii} gives that for all $2\leq N\in\mathbb{N}$,
\begin{equation}\label{3.62}
I^{\rm red}_{\kappa}(N):= \inf\Big\{\cE_{\kappa}^{\rm red}(\gamma):\, 0\leq\gamma\leq1,\ \Tr(\gamma)=N\Big\}=\inf\limits_{\gamma\in\cP_{N}}\cE^{\rm red}_{\kappa}(\gamma),
\end{equation}
where $\cP_{N}$ is defined in \eqref{set}.
Denote by $N^{\rm red}_\kappa$ the critical mass associated with $I^{\rm red}_{\kappa}(N)$, i.e.,
\begin{equation*}
N^{\rm red}_\kappa=\max\big\{N\in\mathbb{N}:\,  I^{\rm red}_{\kappa}(N)>-\infty\big\}.
\end{equation*}
Following the analysis of  \cite[(B.3) and (B.8)]{LenLew-10}, it yields from \eqref{3.62} that the critical mass $N^{\rm HF}_\kappa\in\mathbb{N}$  of the HF problem satisfies
\begin{equation}\label{3.64}
N^{\text{HF}}_{\frac{\kappa}{1-\kappa\pi/4}}\leq N^{\rm red}_\kappa \leq N^{\rm HF}_\kappa\leq N^{\rm red}_{\frac{\kappa}{1+\kappa\pi/4}}.
\end{equation}
Note from \cite[Proposition 2.1 and Appendix B]{LenLew-10} that
$$
\lim\limits_{\kappa\rightarrow 0^{+}}\kappa^{3/2}N^{\rm red}_\kappa = \tau_{c}^{3/2}.
$$
Therefore, we can deduce from \eqref{3.64} that  \eqref{3.67a} holds true.

Similar to \eqref{eq:lambda-critical-asymp-proof}, we further obtain from  \eqref{3.67a} the following asymptotic behaviors of $\kappa_N^{\rm HF}$ and  the fixed-point constraint $m=N^{\rm HF}_{\kappa^{\rm HF}_m}$:
$$
\tau_c^{3/2}=\lim\limits_{\kappa\rightarrow 0^{+}}\kappa^{3/2}N^{\rm HF}_\kappa = \lim\limits_{N\to \infty} (\kappa_N^{\rm HF}) ^{3/2}N^{\rm HF}_{\kappa_N^{\rm HF}}=  \lim\limits_{N\to \infty} (\kappa_N^{\rm HF}) ^{3/2}N.
$$
This completes the proof of Step 1, and  the proof of Theorem  \ref{th2.1} (1) is therefore complete.

 \medskip
\noindent	
{\bf Step 2.} 
Applying Lemmas \ref{than}-\ref{nomin}, we can  suppose that
$\gamma =\sum_{j=1}^{N}|w_{j}\rangle\langle w_{j}|$ is  an optimizer of  $\kappa_N^{\rm HF}$, where the orthonormal  functions $\{w_{1}, \cdots, w_N\}$ are eigenfunctions of the mean-field operator $H_{\gamma }$ in \eqref{h8} associated to the negative eigenvalues $\nu_{1}\leq\nu_{2}\leq\cdots \leq \nu_{N}<0$. In this step, we shall prove that
the  regularity $w_{j}\in C^\infty(\R^3)$ holds for all $j=1,\cdots,N$.  By Sobolev's embedding theorem,  it   suffices  to prove that
\begin{align}\label{hs}
w_{j}\in H^s(\R^3)\ \ \text{for\ any}\ s\geq1,\ \ j=1, \cdots, N.
\end{align}

To prove (\ref{hs}), we first note that $w_j$ satisfies the following eigenvalue problem
\begin{equation}\label{2.37}
\begin{split}
\sqrt{-\Delta}\, w_{j} &=\nu_{j}w_{j}+\kappa^{\rm HF}_N\sum_{k=1}^N\int_{\R^{3}}\frac{|w_{k}(y)|^{2}}{|x-y|}dy\, w_{j}-\kappa^{\rm HF}_N\sum_{k=1}^N\int_{\R^{3}}\frac{\overline{w_{k}(y)}\, w_{j}(y)}{|x-y|}dy\, w_{k} \\
&:=h_{j}\ \ \text{in}\, \ \R^3,\ \ j=1,\cdots, N.
\end{split}
\end{equation}
It then yields that
\begin{equation}\label{2.37a}
\widehat{w}_{j}(\xi)=|\xi|^{-1}\widehat{h}_{j}(\xi)\ \ \text{in}\ \ \R^3,\ \ j=1,\cdots, N.
\end{equation}
Applying the   inequality (cf. \cite[Lemma 3.2]{optimal}) 
\begin{equation}\label{2.39a}
\Big\|\Big(|x|^{-1}\ast(\varphi_1\varphi_{2})\Big)\varphi_3\Big\|_{H^s}\leq C\prod_{k=1}^3\|\varphi_k\|_{H^s}, \ \ \forall\ s\geq\frac{1}{2},
\end{equation}
since $w_{j}\in H^{1/2}(\R^3)$, we get that the function $h_{j}$ of \eqref{2.37} satisfies
\begin{equation*}\label{2.40a}
h_{j}\in H^{1/2}(\R^3),\ \ j=1, \cdots, N,
\end{equation*}
where $C>0$ is independent of $\varphi_k$. We then conclude from \eqref{2.37a} that
\begin{equation}\label{2.42}	 \int_{\R^{3}}|\xi|^{2}|\widehat{w}_{j}(\xi)|^{2}d\xi=\int_{\R^{3}}|\widehat{h}_{j}(\xi)|^{2}d\xi=\|h_{j}\|_{2}^{2}<\infty,\ \ j=1, \cdots, N.
\end{equation}
Note that $(w_1, \cdots, w_{N})\in \big(H^s(\R^3)\big)^N$ holds for $s>0$, if and only if
\begin{equation}\label{2.38a}
\int_{\R^{3}}\big(1+|\xi|^{2s}\big)|\widehat{w}_{j}(\xi)|^{2}d\xi<\infty,\ \  j=1, \cdots, N.
\end{equation}
We thus deduce from \eqref{2.42} that
\begin{equation}\label{2.38b}	
(w_1, \cdots, w_{N})\in \big(H^s(\R^3)\big)^N\ \ \text{holds\ for}\ s=1,
\end{equation}
which further implies from \eqref{2.37} and \eqref{2.39a}
that $(h_1, \cdots, h_{N})\in \big(H^s(\R^3)\big)^N$ holds for $s=1$. The same argument of \eqref{2.38b} then gives from \eqref{2.37a} and \eqref{2.38a} that
\begin{equation*}\label{2.38c}	
(w_1, \cdots, w_{N})\in \big(H^s(\R^3)\big)^N\ \ \text{holds\ for}\ s=2.
\end{equation*}
By induction, we therefore conclude from above that $(w_1, \cdots, w_{N})\in \big(H^s(\R^3)\big)^N$ for any $s\geq1$, and hence \eqref{hs} holds true. 

 \medskip
\noindent	
{\bf Step 3.}
In this step, we prove the decaying property \eqref{decay}, which is equivalent to
\begin{equation}\label{decay1}
\sum_{j=1}^{N}|w_{j}(x)|\leq C\big(1+|x|\big)^{-4}\ \ \text{uniformly\ in}\,\ \R^3.
\end{equation}

To address the above decaying property \eqref{decay1}, we first claim that
\begin{equation}\label{2.39}
w_{j}(x)\to0\ \  \mathrm{as}\ \ |x|\to\infty,\ \ j=1,\cdots,N.
\end{equation}
Let $G_{j}(x)$ be the Green's function of the operator $\sqrt{-\Delta}-\nu_{j}$ in $\R^3$, where $\nu_{j}<0$ is as in \eqref{2.37}.
It then follows from \eqref{2.37} that
\begin{equation}\label{2.46b}
w_{j}(x)=\big(G_{j}\ast f_{j}\big)(x)\ \ \, \text{in}\ \ \R^3, \ \ j=1,\cdots,N,
\end{equation}
where
$$
f_{j}=\kappa^{\rm HF}_N\sum_{k=1}^N\int_{\R^{3}}\frac{|w_{k}(y)|^{2}}{|x-y|}dy\, w_{j}-\kappa^{\rm HF}_N\sum_{k=1}^N\int_{\R^{3}}\frac{\overline{w_{k}(y)}\, w_{j}(y)}{|x-y|}dy\, w_{k}.
$$
Since $w_{j}\in H^1(\R^3)$ holds for $j=1, \cdots, N$, we conclude from \eqref{2.39a} that $f_{j}\in H^1(\R^3)$, and thus
\begin{equation}\label{2.36}	
f_{j}\in L^p(\R^3),\  \  \forall\  p\in[2,6], \ \ j=1, \cdots, N.
\end{equation}
Note from \cite[Lemma C.1]{green} that
\begin{equation*}\label{green}
G_{j}\in L^r(\R^3, \R)\ \ \text{for\ any}\,\ r\in[1,\, 3/2),\ \ j=1, \cdots, N,
\end{equation*}
which then gives that
\begin{equation}\label{2.38}
G_{j}\in L^{\frac{p}{p-1}}(\R^3, \R)\ \ \text{for\ some}\,\ p\in(3,6],\ \ j=1, \cdots, N.
\end{equation}
Applying \eqref{2.46b}--\eqref{2.38}, we therefore conclude from \cite[Lemma A.2]{lcc} that the claim \eqref{2.39} holds true.

We finally prove that the decaying property \eqref{decay1} holds true. Actually, recall from \cite[(C.8)]{green} the general Kato-type inequality
\begin{align*}\label{Kato-ineq}
\sqrt{-\Delta}\, |\varphi|\leq \text{sgn}(\varphi) \sqrt{-\Delta}\, \varphi\ \ \text{a.e.\ in}\ \R^3,\ \ \forall\ \varphi\in H^{1/2}(\R^3),
\end{align*}
and note that
\begin{equation*}\label{2.41}
\begin{split}
&\sum_{k\neq j}^N\int_{\R^{3}}\frac{\overline{w_{k}(y)}w_{j}(y)}{|x-y|}dy\, w_{k}(x)\, \text{sgn}\big(w_{j}(x)\big)\\
\geq &-\int_{\R^{3}}\frac{\rho_{\gamma }(y)}{|x-y|}dy\sum_{k\neq j}^N|w_{k}(x)|\ \ \text{in}\, \ \R^3, \ \ j=1, \cdots, N,
\end{split}
\end{equation*}
where $\text{sgn}\varphi=\overline{\varphi}/|\varphi|$ holds for $\varphi(x)\neq0$, and $\text{sgn}\varphi=0$ otherwise. We then deduce from \eqref{2.37} that for $ j=1,\cdots, N$,
\begin{equation}\label{2.36a}
\begin{split}
\sqrt{-\Delta}\, |w_{j}|-\nu_{j} |w_{j}|-\kappa^{\rm HF}_N\int_{\R^{3}}\frac{\rho_{\gamma }(y)}{|x-y|}dy\, \sum_{k=1}^N|w_{k}|\leq0\ \ \text{in}\, \ \R^3.
\end{split}
\end{equation}
Since $\infty>-\nu_1\geq\cdots\geq-\nu_{N}>0$,
we further obtain from \eqref{2.36a} that
\begin{equation*}
\begin{split}
\sqrt{-\Delta}\, |w_{j}|-\nu_{N} |w_{j}|-\kappa^{\rm HF}_N\int_{\R^{3}}\frac{\rho_{\gamma }(y)}{|x-y|}dy\sum_{k=1}^N|w_{k}|\leq0\ \ \text{in}\, \ \R^3, \ \ j=1,\cdots, N.
\end{split}
\end{equation*}
Denoting $W:=\sum_{k=1}^N|w_{k}|$, we thus deduce that
\begin{equation}\label{2.49a}
\Big[\sqrt{-\Delta}-\nu_{N}-N\kappa^{\rm HF}_N\int_{\R^{3}}\frac{\rho_{\gamma }(y)}{|x-y|}dy\Big]\, W(x)\leq0\ \ \text{in}\, \ \R^3.
\end{equation}
Since $\rho_{\gamma }\in L^r(\R^3, \R)$ holds for any $r\in[1, 3]$, one can calculate that
\begin{equation*}
\begin{split}
&\lim\limits_{|x|\to\infty}\int_{\R^{3}}\frac{\rho_{\gamma }(y)}{|x-y|}dy=0.
\end{split}
\end{equation*}
This yields from \eqref{2.49a} that there exists a sufficiently large constant $R>0$ such that
\begin{equation}\label{2.41b}
\Big(\sqrt{-\Delta}-\frac{\nu^{*}_{N}}{2}\Big)\, W(x)\leq0\ \,\ \text{in}\, \ B^c_R.
\end{equation}
Following the comparison argument of \cite[Lemma C.2]{green}, we thus deduce from \eqref{2.39} and \eqref{2.41b} that there exists a constant $C>0$ such that
\begin{equation}\label{2.57b}
\sum_{k=1}^N|w_{k}|=W(x)\leq C\big(1+|x|\big)^{-4}\ \,\ \text{in}\ \, \R^3,
\end{equation}
which further implies that the decaying estimate \eqref{decay1} holds true. The proof of Theorem \ref{th2.1} is therefore complete.
\end{proof}
	
Following the fact that  $\kappa_N^{\rm HF}\to 0$ as $N\to\infty$, we finally employ  Lemma \ref{nomin} and \cite[Theorem 4]{LenLew-10} to complete the proof of Theorem \ref{cor1}.
	
\begin{proof}[Proof of Theorem \ref{cor1}]
For any $m>0$ and $2\leq N\in\mathbb{N}$, it can be verified from  the definition of $\kappa^{\rm HF}_N$  that   $I^{\rm HF}_{m,\kappa}(N)=-\infty$ if and only if $\kappa>\kappa^{\rm HF}_N$. Recall that $\infty=\kappa_{1}^{\rm HF}>\kappa_{2}^{\rm HF}>\kappa_{3}^{\rm HF}>\cdots>\kappa_N^{\rm HF}>0$ and  $\kappa_N^{\rm HF}\to 0$ as $N\to\infty$.
We thus deduce that  $N^{\text{HF}}_\kappa=1$ if $\kappa>\kappa^{\rm HF}_2$,  $N^{\text{HF}}_\kappa=N$ if $\kappa=\kappa^{\rm HF}_N$ for some $2\leq N\in\mathbb{N}$,   and  $N^{\text{HF}}_\kappa=N$ if $\kappa\in (\kappa^{\rm HF}_{N+1}, \kappa^{\rm HF}_N)$ for some $2\leq N\in\mathbb{N}$.
	
If  $\kappa>\kappa^{\rm HF}_2$, then $N^{\rm HF}_\kappa =1$. It thus yields from  \cite[Theorem 4]{LenLew-10} that the Hartree--Fock variational problem $I^{\rm HF}_{m,\kappa}(N^{\rm HF}_\kappa)$ in  \eqref{problem} has no minimizer.
	
If $\kappa=\kappa^{\rm HF}_N$ holds for some $2\leq N\in\mathbb{N}$, then it follows from  Lemma \ref{nomin} that $I^{\rm HF}_{m,\kappa}(N)$  has no miminizer. This implies that  $I^{\rm HF}_{m,\kappa}(N^{\rm HF}_\kappa )$ has no miminizer, due to  the fact that $N^{\rm HF}_\kappa =N$.
	
If $\kappa\in (\kappa_{N+1}^{\rm HF}, \kappa^{\rm HF}_N)$ holds for some $2\leq N\in\mathbb{N}$, then using the  operator inequality
\begin{equation*}\label{3.8c}
\sqrt{-\Delta+m^2}-m\leq \frac{-\Delta}{2m}, \ \ \forall\ m>0,
\end{equation*} 
we can check that $I^{\rm HF}_{m,\kappa}(N)\in(-\infty, 0)$ and  the energy $\mathcal{E}^{\rm HF}_{m,\kappa}$ is coercive on $\cP_N$. The same argument of \cite[Theorem 4]{LenLew-10} then gives that $I^{\rm HF}_{m,\kappa}(N)$ has a miminizer.  This implies that  $I^{\rm HF}_{m,\kappa}(N^{\rm HF}_\kappa )$ has a miminizer, due to the fact that  $N^{\rm HF}_\kappa =N$, which therefore completes the proof of Theorem \ref{cor1}.
\end{proof}


\section{Limiting behavior of Hartree--Fock minimizers}\label{sec:pf-lim-min}

The aim of this section is to prove Theorem \ref{th2} on the limiting behavior of HF minimizers for $I^{\rm HF}_{m,\kappa}(N)$ defined in \eqref{problem} as $\kappa\nearrow \kappa^{\rm HF}_N$, where $\kappa^{\rm HF}_N>0$ is given in Theorem \ref{th2.1}.

\subsection{Limiting optimization problem} 

We first investigate the limiting   problem of the optimal constant $d_N^*$ defined in \eqref{dn}, which is crucial for the proof of Theorem \ref{th2}.


\begin{lem}[Limiting optimization problem]\label{lem3.1a}
For any given $2\leq N\in\mathbb{N}$,  the optimal constant $d_N^*$ of \eqref{dn} satisfies $0<d_{N}^{*}< \infty$ and admits an optimizer. Moreover, if  $\{\gamma_n\}$ is a minimizing sequence of $d_N^*$, then up to a subsequence,
\begin{align}\label{eq:gamman-CV-gamma-kinetic-strong}
(1-\Delta)^{1/4}\gamma_n (1-\Delta)^{1/4} \to (1-\Delta)^{1/4}\gamma(1-\Delta)^{1/4}
\end{align}
strongly  in $\fS^1$ as $n\to\infty$, where $\gamma$ is an optimizer  of $d_N^*$, and the density $\rho_{\gamma_n}(x)$ has the uniform upper bound
\begin{equation}\label{a.13}
\rho_{\gamma_n}(x) \leq C\big(1+|x|\big)^{-8}\ \ \,\text{in}\, \ \R^3\ \, as \,\ n\to\infty.
\end{equation}
\end{lem}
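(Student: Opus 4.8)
The plan is to combine the rigidity of Gagliardo--Nirenberg optimizers established in Theorem~\ref{th2.1} with the concentration--compactness method, excluding the dangerous scenarios by means of the strict monotonicity \eqref{1.12a} and the projection property of Lemma~\ref{than1}. Throughout, I use that every admissible $\gamma$ in \eqref{dn} is, by Theorem~\ref{th2.1}, a rank-$N$ orthogonal projection $\gamma=\sum_{j=1}^N|w_j\rangle\langle w_j|$ with $\Tr(\sqrt{-\Delta}\gamma)=1$ and $D(\rho_\gamma,\rho_\gamma)-\Ex(\gamma)=2/\kappa_N^{\rm HF}$, whose $w_j$ solve \eqref{h8}; taking the trace of \eqref{h8} moreover yields the normalization identity $\sum_{j=1}^N\nu_j=\Tr(\sqrt{-\Delta}\gamma)-\kappa_N^{\rm HF}(D(\rho_\gamma,\rho_\gamma)-\Ex(\gamma))=-1$, so the eigenvalues stay bounded.

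First I would prove $0<d_N^*<\infty$. For the lower bound, Cauchy--Schwarz in the form $1=\|w_j\|^2\le\langle w_j,\sqrt{-\Delta}w_j\rangle^{1/2}\langle w_j,(-\Delta)^{-1/2}w_j\rangle^{1/2}$, summed over $j$ and combined with a second Cauchy--Schwarz, gives $N\le\Tr(\sqrt{-\Delta}\gamma)^{1/2}\,\Tr((-\Delta)^{-1/2}\gamma)^{1/2}$, hence $\Tr((-\Delta)^{-1/2}\gamma)\ge N^2$ for every admissible $\gamma$ and so $d_N^*\ge N^2>0$. For finiteness, I would take any optimizer of \eqref{eq:GN-HF} from Theorem~\ref{th2.1}, rescale it so that $\Tr(\sqrt{-\Delta}\gamma)=1$, and invoke the decay estimate \eqref{decay} together with the Hardy--Kato bound \eqref{eq:HK-decay}, giving $\Tr((-\Delta)^{-1/2}\gamma)\le C\int_{\R^3}|x|\rho_\gamma(x)\,dx<\infty$, a finite competitor.

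Next I would prove compactness of a minimizing sequence $\{\gamma_n\}$ by the concentration--compactness method, as in the proof of Lemma~\ref{than}. Since $\Tr(\sqrt{-\Delta}\gamma_n)=1$ and $\Tr\gamma_n=N$, the $\{w_j^n\}$ are bounded in $H^{1/2}(\R^3)$, and up to a subsequence $(-\Delta)^{1/4}\gamma_n(-\Delta)^{1/4}\rightharpoonup(-\Delta)^{1/4}\gamma(-\Delta)^{1/4}$ weakly-* in $\fS_1$. Vanishing is excluded exactly as in \eqref{vani}--\eqref{eq:non-vanishing-HFB}: it would force $D(\rho_{\gamma_n},\rho_{\gamma_n})\to0$, hence $D-\Ex\to0$, contradicting $D-\Ex=2/\kappa_N^{\rm HF}>0$. \emph{The main obstacle is dichotomy.} Here I would localize $\gamma_n$ into $\gamma_n^{(1)},\gamma_n^{(2)}$ as in \eqref{eq:gamman1-gamman2}, so that the kinetic energy and the quantities $D,\Ex$ split additively with vanishing cross terms. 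Writing $t_i=\lim_n\Tr(\sqrt{-\Delta}\gamma_n^{(i)})$ and $d_i=\lim_n\big(D(\rho_{\gamma_n^{(i)}},\rho_{\gamma_n^{(i)}})-\Ex(\gamma_n^{(i)})\big)$, one has $t_1+t_2=1$ and $d_1+d_2=2/\kappa_N^{\rm HF}$; the inequality \eqref{orthonormal} applied to each piece (of rank $\le N$ and norm $\le1$) gives $t_i\ge\|\gamma_n^{(i)}\|\,t_i\ge\frac{\kappa_N^{\rm HF}}{2}d_i$, whence $1=t_1+t_2\ge\frac{\kappa_N^{\rm HF}}{2}(d_1+d_2)=1$. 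Equality throughout forces each nontrivial piece to satisfy $\|\gamma^{(i)}\|=1$ and to be an optimizer of \eqref{an}, hence a rank-$N$ projection of trace $N$ by Lemma~\ref{than1} (whose hypothesis $\kappa_N^*<\kappa_{N-1}^*$ holds by \eqref{1.12a}); two such pieces are incompatible with $\Tr\gamma_n=N$, so dichotomy cannot occur.

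Compactness then gives $\gamma_n^{(1)}\to\gamma$ strongly in $\fS_1$ with full mass and full kinetic energy, which upgrades to the strong convergence \eqref{eq:gamman-CV-gamma-kinetic-strong} as in \cite[Proposition 4.1]{LenLew-10}. By the Hardy--Littlewood--Sobolev inequality $D$ and $\Ex$ pass to the limit, so $\gamma$ is again a norm-one optimizer of \eqref{eq:GN-HF} with $\Tr(\sqrt{-\Delta}\gamma)=1$; by weak-* lower semicontinuity of $\gamma\mapsto\Tr((-\Delta)^{-1/2}\gamma)$ and admissibility of $\gamma$, one gets $\Tr((-\Delta)^{-1/2}\gamma)=d_N^*$, so $\gamma$ is an optimizer. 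Finally, for the uniform decay \eqref{a.13}, I would pass to the limit in \eqref{h8} along the strong convergence to obtain $\nu_j^n\to\nu_j<0$, so $-\nu_N^n$ is bounded below by a positive constant for $n$ large; since $\rho_{\gamma_n}\to\rho_\gamma$ strongly in $L^p$ with $\int\rho_{\gamma_n}=N$, the potential $|x|^{-1}\ast\rho_{\gamma_n}$ tends to $0$ uniformly in $n$ as $|x|\to\infty$. The Kato-type subsolution inequality \eqref{2.49a}, now with $n$-uniform constants, together with the comparison argument of Step~3 of Theorem~\ref{th2.1}, then yields $\rho_{\gamma_n}(x)\le C(1+|x|)^{-8}$ uniformly in $n$, which is \eqref{a.13}.
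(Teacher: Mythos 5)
Your bounds $N^2\le d_N^*<\infty$ are obtained exactly as in the paper, but for the compactness step you take a genuinely different route. The paper does \emph{not} run a dichotomy-exclusion argument here: it uses that each $\gamma_n$ in the minimizing sequence is an optimizer of \eqref{eq:GN-HF} and hence solves the Euler--Lagrange system \eqref{mean-uj}; after excluding vanishing and passing to the weak limit it derives the limiting system \eqref{mean-lim} and a Pohozaev-type identity \eqref{Pohozaev}, which together with the trace identity $\sum_j\nu_j=\Tr(\sqrt{-\Delta}\gamma)-\kappa_N^{\rm HF}\big(D(\rho_\gamma,\rho_\gamma)-\Ex(\gamma)\big)$ force $\Tr(\sqrt{-\Delta}\gamma)=\tfrac{\kappa_N^{\rm HF}}{2}\big(D(\rho_\gamma,\rho_\gamma)-\Ex(\gamma)\big)\ne 0$. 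This saturates \eqref{orthonormal}, so $\|\gamma\|=1$ and $\gamma$ is an optimizer; Lemma \ref{than1} then makes it a rank-$N$ projection, no mass is lost, and strong convergence follows. Your localization argument reaches the same conclusion by saturating \eqref{orthonormal} on the localized pieces and invoking the trace-$N$ rigidity of Lemma \ref{than1}. It is correct provided you phrase it on the strongly convergent piece only: equality in your chain, Fatou for the kinetic term, and continuity of $D-\Ex$ under the strong $\fS_{1}$ convergence $\gamma_n^{(1)}\to\gamma^{(1)}$ make the \emph{limit} $\gamma^{(1)}$ an optimizer, hence a projection of trace $N$, contradicting $\Tr\gamma^{(1)}=\lambda^1<N$; nothing needs to be said about the escaping piece (and "two such pieces" is not quite the right formulation, since asymptotic saturation of \eqref{orthonormal} along a sequence does not make any fixed $\gamma_n^{(2)}$ an optimizer). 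Your route avoids the Pohozaev computation at the price of the IMS splitting estimates; the paper's route is shorter because the Euler--Lagrange structure is already in hand.

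The one genuine gap is in the uniform decay \eqref{a.13}. The comparison argument of Step 3 of the proof of Theorem \ref{th2.1} needs as input that $W_n=\sum_j|u_j^n|$ is bounded in $L^\infty$ and tends to zero at infinity \emph{uniformly in $n$}; it is not enough that the subsolution inequality \eqref{2.49a} holds outside a ball of $n$-independent radius, which is the only uniformity you address (via the potential $|x|^{-1}\ast\rho_{\gamma_n}$). The missing ingredients are $\sup_n\|u_j^n\|_{L^\infty}\le C$ and $\lim_{|x|\to\infty}|u_j^n(x)|=0$ uniformly for large $n$. The paper obtains these by writing $u_j^n=G_j^n\ast f_j^n$ with $G_j^n$ the Green's function of $\sqrt{-\Delta}-\nu_j^n$, using the uniform bound $0<G_j^n(x)\le C|x|^{-2}$ (valid precisely because $\nu_j^n\to\nu_j<0$) together with the strong convergence of the source terms $f_j^n$ in $L^p(\R^3)$ for $2\le p<6$, which in turn rests on a uniform $H^1$ bound for the $u_j^n$. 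Without these steps the comparison principle only yields an $n$-dependent constant in \eqref{a.13}, so this part of your argument needs to be completed along those lines.
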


\begin{proof} We first prove that $d_N^* \in (0,\infty)$. Suppose $\gamma$ is an optimizer  of $\kappa^{\rm HF}_N$ defined in \eqref{eq:GN-HF},  then we derive from \eqref{eq:HK} and   \eqref{decay} that $\Tr\left( (-\Delta)^{-1/2} \gamma \right) \in (0,\infty)$ is well-defined. Moreover, it yields from the Cauchy--Schwarz inequality  that
\begin{equation*}\label{eq:Tr-gamma-p-12-CS}
\Tr \big( (-\Delta)^{-1/2}\gamma\big) \Tr ( (-\Delta)^{1/2}\gamma) \ge ( \Tr \gamma)^2,
\end{equation*}
which implies that $d_N^* \in [N^2,\infty)$, and we are done.
	
We next carry out the rest proof by the following three steps.
	
\medskip
\noindent	
{\bf Step 1.}  We prove that if $\{\gamma_n\}$ is a minimizing sequence of $d_N^*$ defined in \eqref{dn}, then up to a subsequence, $\gamma_n$ satisfies
\begin{align}\label{eq:gamman-CV-gamma-kinetic-weak}
	(1-\Delta)^{1/4}\gamma_n (1-\Delta)^{1/4} \rightharpoonup (1-\Delta)^{1/4}\gamma(1-\Delta)^{1/4}
\end{align}
weakly  in $\fS^1$ as $n\to\infty$, where the operator $\gamma=\sum_{j=1}^N |u_j\rangle \langle u_j|\neq0$, and $u_j$ satisfies $\|u_j\|_{L^2}^2\le 1$ and
\begin{equation}\label{mean-lim}
	\begin{split}
		&\sqrt{-\Delta}u_{j}-\kappa^{\rm HF}_N(|x|^{-1}\ast\rho_{\gamma})u_{j}+\kappa^{\rm HF}_N\inte\frac{\gamma(x,y)u_{j}(y)}{|x-y|}dy\\
		&=\nu_ju_{j}\ \ \mbox{in}\ \, \R^3, \ \ \ j=1, \cdots,N,  
\end{split}\end{equation}
together with $\nu_{1} \leq\cdots\leq\nu_{N} \le0$.

To prove above results, we first claim that the vanishing case of $\{\gamma_n\}$ does not occur, namely,
\begin{align}\label{eq:gamma-n-no-vn}
	\liminf_{n\to \infty} \sup_{z\in \R^3} \int_{B_1(z)} \rho_{\gamma_n} (x) d x > 0.
\end{align}
Since   the constraint of \eqref{dn} gives that $\gamma_n \in \cP_N$ and $\Tr (\sqrt{-\Delta} \gamma_n) = 1$, it implies that $\sqrt{\rho_{\gamma_n}}$ is bounded uniformly in $H^{1/2}(\R^3)$ for all $n>0$. Thus, if \eqref{eq:gamma-n-no-vn} were false, then it follows from \cite[Lemma 7.2]{LenLew-10} that $\rho_{\gamma_n}$ would converge strongly to $0$ in $L^p(\R^3)$ for all $1< p < 3/2$. This gives from \eqref{inter-bdd1} that 
$$0 \le \Ex(\gamma_n) \le D(\rho_{\gamma_n},\rho_{\gamma_n}) \to 0 \ \ \mbox{as} \ \ n\to\infty.$$ 
Since ${\gamma_n}$ is an optimizer of $\kappa_N^{\rm HF}$, this yields that
$$
\kappa_N^{\rm HF}= \frac{2 \Tr \big(\sqrt{-\Delta}\gamma_n\big)}{D(\rho_{\gamma_n},\rho_{\gamma_n}) - \Ex(\gamma_n) } \to\infty\ \ \ \text{as}\ \ n\to\infty,$$
a contradiction. This hence proves the claim (\ref{eq:gamma-n-no-vn}).

By Theorem \ref{th2.1}, we can write $\gamma_{n}=\sum_{j=1}^{N}|u_{j}^{n}\rangle\langle u_{j}^{n}|$, where the orthonormal functions $u_1^{n}, \cdots, u_{N}^{n}$ solve the following system:
\begin{align}\label{mean-uj}
	\sqrt{-\Delta}u_{j}^{n}-\kappa^{\rm HF}_N\inte\frac{\rho_{\gamma_{n}}(y)}{|x-y|}dy\, u_{j}^{n}+\kappa^{\rm HF}_N \inte\frac{\gamma_{n}(x,y)u_{j}^{n}(y)}{|x-y|}dy =\nu_{j}^{n}u_{j}^{n}\ \ \mbox{in}\ \ \R^3,
\end{align}
where $ j=1,\cdots, N$ and $\nu_{1}^{n}\leq\cdots\leq\nu_{N}^{n}<0$. Applying the non-vanishing property \eqref{eq:gamma-n-no-vn}, up to a translation,  there exist a subsequence of $\{\gamma_n\}$, still denoted by $\{\gamma_n\}$, and an operator $0\neq\gamma=\sum_{j=1}^N |u_j\rangle \langle u_j|$ such that (\ref{eq:gamman-CV-gamma-kinetic-weak}) holds true, where  the function $u_j $ satisfies $\|u_j\|_{L^2}^2\le 1$ and
\begin{eqnarray}\label{a.11}
	u_{j}^{n}\rightharpoonup u_{j}\ \ \mathrm{weakly\ in}\, \ H^{1/2}(\R^3)\ \ \text{as}\ \ n\to\infty,\ \   j=1,\cdots,N.
\end{eqnarray}
Multiplying \eqref{mean-uj} by $\overline{u_{j}^{n}}$, and  integrating  over $\R^3$, we then obtain that for any $n\in \mathbb{N},$
\begin{align*}
	\sum_{j=1}^{N}\nu_{j}^{n}&=\Tr(\smash{\sqrt{-\Delta}\gamma_{n}})-\kappa^{\rm HF}_ND(\rho_{\gamma_{n}},\rho_{\gamma_{n}})+\kappa^{\rm HF}_N\Ex(\gamma_{n})\\
	&=-\Tr(\smash{\sqrt{-\Delta}\gamma_{n}})=-1,
\end{align*}
where we have used the fact that $\|u_{j}^{n}\|_{L^2}^2=1$ holds for any $j=1, \cdots, N$. This hence implies that up to a subsequence,
\begin{align}\label{mean-ev-limt}
	-\infty<\lim_{n\rightarrow\infty}\nu_{j}^{n}:=\nu_j\leq 0,\ \, j=1,\cdots,N.
\end{align}
We therefore deduce from \eqref{mean-uj}--\eqref{mean-ev-limt} that the operator $\gamma=\sum_{j=1}^N |u_j\rangle \langle u_j|$  satisfies (\ref{mean-lim}), and Step 1 is proved.

\medskip
\noindent	
{\bf Step 2.} We prove that the operator $\gamma$ of Step 1 is indeed an optimizer of $d_N^*$ defined in \eqref{dn}, and \eqref{eq:gamman-CV-gamma-kinetic-strong} holds true.

To establish Step 2, we first claim that the operator $\gamma$ of Step 1 satisfies the following Pohozaev-type identity:
\begin{align}\label{Pohozaev}
	\Tr(\smash{\sqrt{-\Delta}\gamma})&-\frac{5\kappa^{\rm HF}_N}{4}\big[D(\rho_{\gamma},\rho_{\gamma})-\Ex(\gamma)\big]=\frac{3}{2}\sum_{j=1}^{N}\nu_j.
\end{align}
Actually, since $u_{j}\in H^{1/2}(\R^{3})$ satisfies \eqref{mean-lim}, the same argument of \eqref{hs} gives that $u_{j}\in H^{s}(\R^{3})$ holds for any $s\geq 1$, where $j=1,\cdots,N$.  By Sobolev's embedding theorem, this then implies that $u_j\in C^\infty(\R^3)$. Multiply \eqref{mean-lim} by $x\cdot\nabla \overline{u}_{j}$, integrate it over $\R^{3}$, and then take its real part for  any $j=1, \cdots, N$. We thus have
\begin{align}\label{kin-rel}
	\Re\langle x\cdot\nabla u_{j},\sqrt{-\Delta}u_{j}\rangle 
	&=-\langle u_{j},\sqrt{-\Delta}u_{j}\rangle,\,\ j=1, \cdots, N,\\
	\qquad\ \Re\langle x\cdot\nabla u_{j},u_{j}\rangle &=-\frac{3}{2}\|u_{j}\|_{2}^{2},\,\ j=1, \cdots, N,
\end{align}
and
\begin{align}\label{5/4-dir}
	&\Re\sum_{j=1}^{N}\int_{\R^{3}}\int_{\R^{3}}\frac{\rho_{\gamma}(y)}{|x-y|}u_{j}(x)\big(x\cdot\nabla\bar {u}_{j}(x)\big)dxdy\nonumber\\ &\quad=\frac{1}{2}\int_{\R^{3}}\big(|x|^{-1}*\rho_{\gamma}\big)\big(x\cdot\nabla\rho_{\gamma}\big)dx=-\frac{5}{4}D(\rho_{\gamma},\rho_{\gamma}).
\end{align}
Similar to \eqref{5/4-dir}, one can also calculate that
\begin{align}\label{5/4-ex} &\Re\sum_{j=1}^{N}\int_{\R^{3}}\int_{\R^{3}}\frac{\gamma(x,y)u_{j}(y)}{|x-y|}\big(x\cdot\nabla\bar{u}_{j}(x)\big)dxdy\nonumber\\
	& =\frac{1}{2}\sum_{j,k=1}^N\int_{\R^{3}}\big[|x|^{-1}\ast(u_{j}\bar u_{k})\big]\big(x\cdot\nabla\bar{u}_{j}u_{k}\big)dx\\
	& =-\frac{5}{4}\sum_{j,k=1}^{N}\int_{\R^{3}}(|x|^{-1}\ast u_{j}\bar u_{k})\bar u_{j}u_{k}dx=-\frac{5}{4}\Ex(\gamma).\nonumber
\end{align}
We hence conclude from \eqref{kin-rel}--\eqref{5/4-ex} that the claim \eqref{Pohozaev} holds true.

Applying \eqref{mean-lim} and \eqref{Pohozaev}, we now get that
$$\Tr(\smash{\sqrt{-\Delta}\gamma})=\frac{\kappa^{\rm HF}_N}{2}\big[D(\rho_{\gamma},\rho_{\gamma})-\Ex(\gamma)\big]\neq 0.$$
Note from Lemmas \ref{than1} and \ref{nomin} that $\kappa_N^{\rm HF}=\kappa_N^*$, where  $\kappa_N^*$ defined in \eqref{an}. We then have
\begin{align}\label{5.15cb}
	\frac{2\|\gamma\|\Tr(\smash{\sqrt{-\Delta}\gamma})}{D(\rho_{\gamma},\rho_{\gamma})-\Ex(\gamma)}
	\geq\kappa_N^{*}=\frac{2\Tr(\smash{\sqrt{-\Delta}\gamma})}{D(\rho_{\gamma},\rho_{\gamma})-\Ex(\gamma)}.
\end{align}
Because $\|\gamma\|\leq\liminf_{n\rightarrow\infty}\|\gamma_{n}\|=1$, we deduce from \eqref{5.15cb}  that  $\|\gamma\|=1$, and $\gamma\in \cR_N$  is an optimizer of $\kappa_N^{*}$. Together with Lemma \ref{than1}, we thus conclude that $\gamma\in \cP_N$  is an optimizer of $\kappa_N^{\rm HF}$, and 
\begin{equation}\label{5.16cb}
	u_{j}^{n}\to  u_{j}\ \ \ \text{strongly in}\  L^2(\R^3)\ \ \text{as}\  \ n\rightarrow\infty, \ \ \ j=1,\cdots,N.
\end{equation}
Since $\sqrt{\rho_{\gamma_n}}$ is bounded uniformly in $H^{\frac{1}{2}}(\R^3)$, which further implies that $\rho_{\gamma_n}\to \rho_\gamma$ strongly in $L^p(\R^3)$ as $n\to\infty$ for all $1\le p<3/2$. Therefore,  we derive from  \eqref{inter-bdd1} that
$$0\le X(\gamma_n-\gamma)\le D(\rho_{\gamma_n-\gamma},\rho_{\gamma_n-\gamma})\to 0\ \  \text{as}\ \ n\to\infty,$$
and hence
\begin{align}\label{DEx-con}
	D(\rho_{\gamma_{n}},\rho_{\gamma_{n}}) \to D(\rho_{\gamma},\rho_{\gamma}),\ \ \Ex(\gamma_n)\to \Ex(\gamma)\ \ \mbox{as}\ \ n\to\infty.
\end{align}
Since $\gamma_{n}$ and $\gamma$ are optimizers of  $\kappa^{\rm HF}_N$,  by \eqref{DEx-con} and Fatou's lemma,  we have
\begin{align}
	\Tr(\smash{\sqrt{-\Delta}\gamma})&\leq \liminf_{n\rightarrow\infty}\Tr(\smash{\sqrt{-\Delta}\gamma_{n}}) \nonumber\\
	&=\frac{\kappa^{\rm HF}_N}{2}\liminf_{n\rightarrow\infty}\big(D(\rho_{\gamma_{n}},\rho_{\gamma_{n}})-\Ex(\gamma_{n})\big)\\
	&=\frac{\kappa^{\rm HF}_N}{2}\big(D(\rho_{\gamma},\rho_{\gamma})-\Ex(\gamma)\big)=\Tr(\smash{\sqrt{-\Delta}\gamma}).\nonumber
\end{align}
This proves that 
\begin{align}\label{5.19}
	\Tr(\smash{\sqrt{-\Delta}\gamma})=\liminf_{n\rightarrow\infty}\Tr(\smash{\sqrt{-\Delta}\gamma_{n}})=1. 
\end{align}
Combining \eqref{a.11}, \eqref{5.16cb} and \eqref{5.19},  we thus  derive that   \eqref{eq:gamman-CV-gamma-kinetic-strong} holds true, $i.e.$, $u_{j}^{n}\rightarrow u_{j}$ strongly in $H^{1/2}(\R^{n})$ as $n\rightarrow\infty$ for each $j=1,\cdots,N$. 
Moreover, since $\{\gamma_n\}$ is a minimizing sequence of $d_N^*$ and $\gamma$  is an optimizer of $\kappa_N^{\rm HF}$, using again  Fatou's lemma, we have
$$
\Tr\big( (-\Delta)^{-1/2}\gamma\big)\le \liminf_{n\to \infty}\Tr\big( (-\Delta)^{-1/2}\gamma_n\big) = d_N^*\leq\Tr\big( (-\Delta)^{-1/2}\gamma\big).
$$
We thus conclude that $\gamma$ is also an optimizer of $d_N^*$ defined in \eqref{dn}.  This proves Step 2.

\medskip
\noindent	
{\bf Step 3.} It remains to prove the uniform upper bound (\ref{a.13}) of the density $\rho_{\gamma_n}(x)$ as $n\to \infty$. In the
following, all constants $C>0$ are independent of $n>0$.

Towards the above aim, we first claim that for $j=1, \cdots, N$,
\begin{align}\label{uj-H1bdd}
	\text{ the sequence }	\{u_{j}^{n}\}_{n} \text{ is bounded uniformly in }H^{1}(\R^{3})\,  \text{ as }\, n\to \infty.
\end{align}
Indeed, we follow from \eqref{mean-uj} that for $j=1, \cdots, N$, 
\begin{equation}\label{eq:ujn-H1}
\begin{split}	
	&\|\sqrt{-\Delta}u_{j}^{n}\|_{L^2}^2 \\
	=&\Big\|\nu_{j}^{n}u_{j}+\kappa^{\rm HF}_N(|x|^{-1}\ast\rho_{\gamma_{n}})u_{j}^{n}-\kappa^{\rm HF}_N \inte\frac{\gamma_{n}(x,y)u_{j}^{n}(y)}{|x-y|}dy\Big\|_{L^2}^{2}.
\end{split}\end{equation}
Since $\sqrt{\rho_{\gamma_n}}$ is bounded uniformly in $H^{1/2}(\R^3)$, we have
\begin{align}\label{kin-bdd}
	0&\leq \sup_{x\in\R^{3}}\int_{\R^{3}}\frac{\rho_{\gamma_{n}}(y)}{|x-y|}dy\leq \frac{\pi}{2}\big\langle \sqrt{\rho_{\gamma_n}}, \sqrt{-\Delta} \sqrt{\rho_{\gamma_n}} \big\rangle \leq C<\infty,
\end{align}
due to the Hardy--Kato inequality \eqref{eq:HK}.
Moreover, since $\Tr(\sqrt{-\Delta}\gamma_n)=1$, by the Cauchy-Schwarz inequality,  we have
\begin{align}\label{ex-bdd}
	0&\leq 
	\left| \int_{\R^{3}}\frac{\gamma_{n}(x,y)u_{j}^{n}(y)}{|x-y|}dy \right| \nn\nonumber\\
	&\leq \sum_{k=1}^{N}|u_{k}^{n}(x)|\left(\int_{\R^{3}}\frac{|u_{k}(y)|^{2}}{|x-y|}dy\right)^{1/2}\left(\int_{\R^{3}}\frac{|u_{j}(y)|^{2}}{|x-y|}dy\right)^{1/2}\\
	&\leq \Tr(\smash{\sqrt{-\Delta}\gamma_{n}})\sum_{k=1}^{N}|u_{k}^{n}(x)|\leq \sum_{k=1}^{N}|u_{k}^{n}(x)|, \ \ j=1, \cdots, N.\nonumber
\end{align}
Inserting \eqref{kin-bdd} and \eqref{ex-bdd} into \eqref{eq:ujn-H1}, we then conclude that the claim \eqref{uj-H1bdd} holds true.

We next claim that for $j=1, \cdots, N$,
\begin{align}\label{f-str-con}
	f_{j}^{n}\rightarrow f_{j}\,\  \text{strongly in }\,L^{p}(\R^{3})\, \text{ as }\, n\rightarrow\infty,\ \ \forall \,2\leq p<6,
\end{align}
where we denote, for each $j=1,...,N$,
\begin{equation}\label{f5:21}
	\begin{split}
		\frac{f_{j}^{n}(x)}{\kappa^{\rm HF}_N}&:=\int_{\R^{3}}\frac{\rho_{\gamma_{n}}(y)}{|x-y|}dy|u_{j}^{n}(x)|+
		\int_{\R^{3}}\frac{|\gamma_{n}(x,y)|}{|x-y|}|u_{j}^{n}(y)|dy,\\
		\frac{f_{j}(x)}{\kappa^{\rm HF}_N}&:=\int_{\R^{3}}\frac{\rho_{\gamma}(y)}{|x-y|}dy|u_{j}^{n}(x)|+\int_{\R^{3}}\frac{|\gamma(x,y)|}{|x-y|}|u_{j}^{n}(y)|dy.
	\end{split}
\end{equation}
Indeed, since $u_{j}^{n}\to u_j$ strongly in $H^{1/2}(\R^3)$ as $n\to \infty$, we have, for each $j=1, \cdots, N,$
\begin{align*} 
	&\int_{\R^{3}}\Big(\int_{\R^{3}}\frac{\rho_{{\gamma_{n}}}(y)}{|x-y|}dy\, |u_{j}^{n}(x)|-\int_{\R^{3}}\frac{\rho_{\gamma}(y)}{|x-y|}dy\, |u_{j}(x)|\Big)^{2}dx\nonumber\\
	&\leq2\int_{\R^{3}}\Big|\int_{\R^{3}}\frac{\rho_{{\gamma_{n}}}(y)}{|x-y|}dy\Big|^{2}\, \big|u_{j}^{n}(x)-u_{j}(x)\big|^{2}dx\nonumber\\
	&\quad+2\int_{\R^{3}}\Big|\int_{\R^{3}}\frac{\rho_{{\gamma_{n}}}(y)-\rho_{\gamma}(y)}{|x-y|}dy\Big|^{2}\, \big|u_{j}(x)\big|^{2}dx \to 0\,  \text{ as }\, n\to \infty,
\end{align*}
where the Hardy--Littlewood--Sobolev inequality is employed. Similarly, we have for $j=1, \cdots, N,$
\begin{align*} 
	&\int_{\R^{3}}\Big|\int_{\R^{3}}\frac{|\gamma_{n}(x,y)|\, 	|u_{j}^{n}(y)|-|\gamma(x,y)|\, |u_{j}(y)|}{|x-y|}dy\Big|^{2}dx\nonumber\\
	&\leq2\int_{\R^{3}}\Big(\int_{\R^{3}}\frac{\big|\gamma_{n}(x,y)\big|\, \big|u_{j}^{n}(y)-u_{j}(y)\big|}{|x-y|}dy\Big)^{2}dx\nonumber\\
	&\quad+2\int_{\R^{3}}\Big(\int_{\R^{3}}\frac{\big|\gamma_{n}(x,y)-\gamma(x,y)\big|\, |u_{j}(y)|}{|x-y|}dy\Big)^{2}dx \to 0\,  \text{ as }\, n\to \infty.
\end{align*}
These yield that $f_{j}^{n}\to   f_{j}$ strongly in $L^{2}(\R^3)$ as $n\to\infty$, where $f_{j}^{n}$ and $f_j$ are as in (\ref{f5:21}) for $j=1, \cdots, N$. Moreover, since
$$|f_{j}^{n}(x)|\leq 2C\kappa^{\rm HF}_N\sum_{k=1}^N \big|u_{k}^n(x)\big|\ \ \mbox{in}\,\ \R^3,$$
and $\{u_k^n\}_n$ is bounded uniformly in $H^1(\R^3)$, it gives that $f_j^n$ is bounded uniformly in $L^6(\R^3)$ for $j=1, \cdots, N$. By the interpolation inequality, we thus obtain the claim \eqref{f-str-con}.

We now prove that for each $j=1, \cdots, N$,
\begin{align}\label{Linf-ubdd}
	\|u_{j}^{n}\|_{L^\infty}\text{ is bounded uniformly for sufficiently large }n>0,
\end{align}
and
\begin{align}\label{asymp-udecay}
	\lim_{|x|\rightarrow\infty}|u_{j}^{n}(x)|=0\quad\text{ uniformly for sufficiently large }n>0.
\end{align}
Actually, the argument of \cite[Lemma 2.4]{me3} yields from \cite[Section 7.11 (9)]{analysis} that the Green function $G_{j}^n$ of the operator  $\sqrt{-\Delta}-\nu_j^n$ in $\R^3$ has the following uniform bound for sufficiently large $n>0$:
\begin{align}\label{g1}
	0<G_{j}^{n}(x)<C |x|^{-2} \ \ \text{uniformly in}\ \, \R^3
\end{align}
due to the fact that
\begin{align}\label{3.49}
	\lim\limits_{n\to\infty}\nu_{j}^{n}=\nu_j<0,\ \ j=1,\cdots,	N,
\end{align}
where $C>0$ is independent of $j, n$ and $x$.
Applying \eqref{f-str-con} and \eqref{g1}, we then deduce from \eqref{mean-uj} that, for $j=1, \cdots, N,$
\begin{align*}
	\|u_{j}^{n}\|_{L^\infty}&= \| G_j^n * f_j^n\|_{L^\infty}\\
    &\leq C \sup\limits_{x\in\R^3}\Big[\int_{|x-y|\leq1}|x-y|^{-2}f_{j}^{n}(y) dy+\int_{|x-y|>1}|x-y|^{-2}f_{j}^{n}(y) dy\Big]\\
	&\leq C\big\|f_{j}^{n}\big\|_{L^5}\Big(\int_{|y|\leq1}|y|^{-\frac{5}{2}}dy\Big)^{\frac{4}{5}}
	+C_g\big\|f_{j}^{n}\big\|_{L^{5/2}}\Big(\int_{|y|>1}|y|^{-\frac{10}{3}}dy\Big)^{\frac{3}{5}}\\
	&\leq C\big\|f_{j}^{n}\big\|_{L^5}+C\big\|f_{j}^{n}\big\|_{L^{5/2}} \le C
	\ \ \mathrm{uniformly\ for \ sufficiently\ large}\ n>0,
\end{align*}
and 
\begin{align*}
	&\lim\limits_{|x|\to\infty} |u_{j}^{n}(x)|\\ &\leq\lim\limits_{R\to\infty}\lim\limits_{|x|\to\infty}\Big( \int_{|y|\leq R} G_{j}^{n}(x-y)f_{j}^{n}(y) dy+\int_{|y|\geq R} G_{j}^{n}(x-y)f_{j}^{n}(y)dy\Big)\\
	&=\lim\limits_{R\to\infty}\lim\limits_{|x|\to\infty} \int_{|y|\geq R} G_{j}^{n}(x-y)f_{j}^{n}(y)dy\\[1.5mm]
	&\leq
	\lim\limits_{R\to\infty}\lim\limits_{|x|\to\infty}\int_{\{y:\, |y|\geq R, \, |x-y|\leq 1\}}
	G_{j}^{n}(x-y)f_{j}^{n}(y)dy\\
	&\quad\quad+\lim\limits_{R\to\infty}\lim\limits_{|x|\to\infty}\int_{\{y:\, |y|\geq R, \, |x-y|>1\}}
	G_{j}^{n}(x-y)f_{j}^{n}(y)dy\\
	&\leq  C \lim\limits_{R\to\infty}\big\|f_{j}^{n}\big\|_{L^5(B_R^c)}+ C \lim\limits_{R\to\infty}\big\|f_{j}^{n}\big\|_{L^{\frac{5}{2}}(B_R^c)}=0
\end{align*}
uniformly for  sufficiently large $n>0$. This therefore shows that \eqref{Linf-ubdd} and \eqref{asymp-udecay} hold true.

Applying \eqref{mean-uj}, \eqref{Linf-ubdd},  \eqref{asymp-udecay} and \eqref{3.49},  similar to \eqref{2.57b}, one can derive that \eqref{a.13} holds  uniformly in $n>0$.  The proof of Lemma \ref{lem3.1a} is therefore complete.
\end{proof}

\subsection{Proof of Theorem \ref{th2}}\label{sec:pf-thmth2}
The purpose of this subsection is to complete the proof of Theorem \ref{th2}. For this reason, we first recall that $I^{\rm HF}_{m,\kappa_{N}^{\rm HF}}(N)+mN=0$. The following lemma is concerned with the convergence rate of $I^{\rm HF}_{m,\kappa_{n}}(N)+mN$ and the blow-up rate of the corresponding minimizers, where $\kappa_{n}\nearrow \kappa^{\rm HF}_N$ as $n\to\infty$.

\begin{lem}[Energy estimate]\label{lem3.1}   For $m>0$ and $2\leq N\in\mathbb{N}$, suppose that $\gamma_{\kappa_{n}}=\sum_{j=1}^{N}\big|u_{j}^{\kappa_{n}}\big\rangle \big\langle u_{j}^{\kappa_{n}}\big|$ is a minimizer of $I^{\rm HF}_{m,\kappa_{n}}(N)$ defined by \eqref{problem}, where $\kappa_{n}\nearrow \kappa^{\rm HF}_N$ as $n\to\infty$, and $u_{1}^{\kappa_{n}}, \cdots, u_{N}^{\kappa_{n}}$ are  eigenfunctions of the operator
\begin{equation*} H_{\gamma_{\kappa_{n}}}:=\sqrt{-\Delta+m^{2}}-m-\kappa_{n}\int_{\R^{3}}\frac{\rho_{\gamma_{\kappa_{n}}}(y)}{|x-y|}dy +\kappa_{n}\, \frac{\gamma_{\kappa_{n}}(x,y)}{|x-y|}\, \text{ on }\, L^2(\R^3).
\end{equation*}
Then, up to a subsequence if necessary, there exist a sequence $\{z_n\}\subset\R^3$ and an orthonormal system $\{w_1, \cdots, w_N\}$ such that for $j=1,\cdots,N,$
\begin{eqnarray}\label{3.0}	 w_{j}^{\kappa_{n}}(x):=\epsilon_{\kappa_{n}}^{3/2}u_{j}^{\kappa_{n}}\big(\epsilon_{\kappa_{n}}x+\epsilon_{\kappa_{n}}z_n\big)\to w_{j}
\end{eqnarray}
strongly in $H^{1/2}(\R^{3})$ as $n\rightarrow\infty$, 
where the constant $\epsilon_{\kappa_{n}}>0$ satisfies
\begin{align}\label{4.6}
\epsilon_{\kappa_{n}}:=\Big(\Tr\big(\sqrt{-\Delta}\, \gamma_{\kappa_{n}}\big)\Big)^{-1}\to0\ \ \text{as }\ n\to\infty,
\end{align}
and $\gamma^{*}:=\sum_{j=1}^{N}|w_{j}\rangle \langle w_{j}|$ is an optimizer of $d_N^*$ defined in \eqref{dn}. Moreover,  we have the asymptotic formulas
\begin{equation}\label{52.MN}
\begin{split}
\epsilon_{\kappa_{n}}&\sim \Big[\frac{2( \kappa^{\rm HF}_N-\kappa_{n})}{m^2\kappa^{\rm HF}_N d_N^*}\Big]^{1/2} \ \ \text{as}\ \ n\to\infty,\\
I^{\rm HF}_{m,\kappa_{n}}(N)+mN&\sim m \Big[\frac{2d_N^*(\kappa^{\rm HF}_N-\kappa_{n})}{\kappa^{\rm HF}_N}\Big]^{1/2}\ \ \text{as}\ \ n\to\infty.
\end{split}
\end{equation}
\end{lem}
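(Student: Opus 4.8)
The plan is to reduce the whole statement to one exact scaling decomposition of the energy, and then to squeeze an upper bound (built from a single $d_N^*$-optimizer) against a lower bound (built from the rescaled minimizers); the balancing of the two terms will simultaneously identify the limiting profile and fix the blow-up rate. Write $\epsilon=\epsilon_{\kappa_n}$ and set $\tilde\gamma_{\kappa_n}(x,y):=\epsilon^3\gamma_{\kappa_n}(\epsilon x,\epsilon y)\in\cP_N$, so that $\Tr(\sqrt{-\Delta}\tilde\gamma_{\kappa_n})=1$ by \eqref{4.6}. Using the scaling $\cE^{\rm HF}_{0,\kappa}(\beta^3\gamma(\beta\cdot,\beta\cdot))=\beta\,\cE^{\rm HF}_{0,\kappa}(\gamma)$ together with $\sqrt{-\Delta+m^2}-m=\sqrt{-\Delta}+B_m-m$, $B_m=\sqrt{-\Delta+m^2}-\sqrt{-\Delta}>0$, a Fourier computation gives the identity
\[
I^{\rm HF}_{m,\kappa_n}(N)+mN=\epsilon^{-1}\cE^{\rm HF}_{0,\kappa_n}(\tilde\gamma_{\kappa_n})+\epsilon^{-1}\Tr\big(B_{\epsilon m}\tilde\gamma_{\kappa_n}\big),
\]
in which both summands are nonnegative: the first since the orthonormal Gagliardo--Nirenberg inequality \eqref{orthonormal}/\eqref{eq:GN-HF} yields $\cE^{\rm HF}_{0,\kappa_n}(\tilde\gamma_{\kappa_n})\ge (\kappa^{\rm HF}_N-\kappa_n)/\kappa^{\rm HF}_N=:a_n\ge0$, the second since $B_{\epsilon m}>0$. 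For the upper bound I would fix one $d_N^*$-optimizer $\gamma^*$ (existing by Lemma~\ref{lem3.1a}, with $\Tr(\sqrt{-\Delta}\gamma^*)=1$ and $\Tr((-\Delta)^{-1/2}\gamma^*)=d_N^*$), rescale it by $\beta$, and expand via dominated convergence (legitimate as $\Tr((-\Delta)^{-1/2}\gamma^*)<\infty$) to get $\cE^{\rm HF}_{m,\kappa_n}((\gamma^*)_\beta)+mN=a_n\beta+\tfrac{m^2 d_N^*}{2\beta}(1+o(1))$; optimizing in $\beta$ gives $I^{\rm HF}_{m,\kappa_n}(N)+mN\le m[2d_N^*(\kappa^{\rm HF}_N-\kappa_n)/\kappa^{\rm HF}_N]^{1/2}(1+o(1))$. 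Together with $I^{\rm HF}_{m,\kappa_n}(N)\ge I^{\rm HF}_{m,\kappa^{\rm HF}_N}(N)=-mN$ (monotonicity in $\kappa$ and Lemma~\ref{nomin}) this shows $I^{\rm HF}_{m,\kappa_n}(N)+mN\to0^+$.

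Next I would prove $\epsilon_{\kappa_n}\to0$. If $\Tr(\sqrt{-\Delta}\gamma_{\kappa_n})\to0$ on a subsequence, the Gagliardo--Nirenberg inequality forces $D(\rho_{\gamma_{\kappa_n}},\rho_{\gamma_{\kappa_n}})-X(\gamma_{\kappa_n})\to0$, hence $\cE^{\rm HF}_{m,\kappa_n}(\gamma_{\kappa_n})\ge-\tfrac{\kappa_n}{2}(D-X)\to0$, contradicting $\cE^{\rm HF}_{m,\kappa_n}(\gamma_{\kappa_n})\to-mN<0$. If instead $\epsilon_{\kappa_n}\to\epsilon_0\in(0,\infty)$, then $\epsilon^{-1}$ stays bounded, the bracket in the identity tends to $0$, so $\Tr(B_{\epsilon m}\tilde\gamma_{\kappa_n})\to0$; but for $\epsilon\ge\epsilon_0/2$ one has $B_{\epsilon m}\ge B_{\epsilon_0m/2}\gtrsim(1+\sqrt{-\Delta})^{-1}$, and by convexity of $t\mapsto(1+t)^{-1}$ with $\Tr\tilde\gamma_{\kappa_n}=N$, $\Tr(\sqrt{-\Delta}\tilde\gamma_{\kappa_n})=1$ we get $\Tr(B_{\epsilon_0m/2}\tilde\gamma_{\kappa_n})\gtrsim N^2/(N+1)>0$, a contradiction. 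With $\epsilon_{\kappa_n}\to0$, the identity forces $\cE^{\rm HF}_{0,\kappa_n}(\tilde\gamma_{\kappa_n})\to0$, i.e. $2\Tr(\sqrt{-\Delta}\tilde\gamma_{\kappa_n})/(D-X)\to\kappa^{\rm HF}_N$, so $\{\tilde\gamma_{\kappa_n}\}$ is a minimizing sequence for $\kappa_N^*=\kappa^{\rm HF}_N$. The compactness argument of Lemma~\ref{than} (non-vanishing from the non-degeneracy of $D-X$) then gives, after a translation $z_n$ and along a subsequence, strong $H^{1/2}$-convergence $w_j^{\kappa_n}=\tau_{-z_n}\tilde u_j^{\kappa_n}\to w_j$, with $\gamma^*=\sum_j|w_j\rangle\langle w_j|$ a Gagliardo--Nirenberg optimizer by Lemma~\ref{than1}; this yields \eqref{3.0}.

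The remaining and hardest task is to upgrade $\gamma^*$ to a $d_N^*$-optimizer and extract the rate, which requires passing to the limit in the singular quantity $\Tr((-\Delta)^{-1/2}\tilde\gamma_{\kappa_n})$, whose weight $|\eta|^{-1}$ is sensitive to low frequencies (large $x$). The crucial input is a uniform-in-$n$ decay estimate $\rho_{\tilde\gamma_{\kappa_n}}(x)\le C(1+|x|)^{-8}$, which I would obtain by adapting Step~3 of Lemma~\ref{lem3.1a} to the rescaled mean-field equation: its operator is $\sqrt{-\Delta+\epsilon^2m^2}-\epsilon m$ and its eigenvalues $\epsilon_{\kappa_n}\nu_j^{\kappa_n}$ stay bounded away from $0$ (since $\sum_j\epsilon\nu_j^{\kappa_n}\to-1$ and each converges to a negative limit), so the Green's-function bound \eqref{g1} and the Kato-type comparison apply uniformly in $n$. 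Through \eqref{eq:HK-decay} this produces a uniform bound on $\Tr((-\Delta)^{-1/2}\tilde\gamma_{\kappa_n})$ and, more importantly, uniform integrability of $|\eta|^{-1}|\widehat{\tilde u}_j^{\kappa_n}|^2$ near $\eta=0$; combined with the strong $H^{1/2}$-convergence this gives $\Tr((-\Delta)^{-1/2}\tilde\gamma_{\kappa_n})\to\Tr((-\Delta)^{-1/2}\gamma^*)=:D^*$ and $\Tr(B_{\epsilon m}\tilde\gamma_{\kappa_n})=\tfrac{\epsilon^2m^2}{2}D^*(1+o(1))$. I expect this uniform decay to be the main obstacle, exactly as in Lemma~\ref{lem3.1a}.

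Finally, inserting these asymptotics into the identity gives, with $D^*\ge d_N^*$,
\[
I^{\rm HF}_{m,\kappa_n}(N)+mN\ge\epsilon^{-1}a_n+\tfrac{\epsilon m^2}{2}D^*(1+o(1))\ge m\sqrt{2a_nD^*}\,(1+o(1))
\]
by the arithmetic--geometric mean inequality. Comparing with the upper bound $m\sqrt{2a_nd_N^*}(1+o(1))$ forces $D^*\le d_N^*$, hence $D^*=d_N^*$ and $\gamma^*$ optimizes $d_N^*$ in \eqref{dn}; moreover the arithmetic--geometric mean inequality is then asymptotically saturated, which forces the balance $\epsilon^{-1}a_n\sim\tfrac{\epsilon m^2 d_N^*}{2}$, i.e. $\epsilon_{\kappa_n}\sim[2(\kappa^{\rm HF}_N-\kappa_n)/(m^2\kappa^{\rm HF}_N d_N^*)]^{1/2}$, and simultaneously $I^{\rm HF}_{m,\kappa_n}(N)+mN\sim m[2d_N^*(\kappa^{\rm HF}_N-\kappa_n)/\kappa^{\rm HF}_N]^{1/2}$. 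This establishes \eqref{52.MN} and completes the proof; apart from the uniform decay estimate, every other step is scaling bookkeeping plus the compactness already developed for the Gagliardo--Nirenberg problem.
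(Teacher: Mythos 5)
Your proposal is correct and follows essentially the same strategy as the paper: the same energy decomposition (your exact scaling identity $I^{\rm HF}_{m,\kappa_n}(N)+mN=\epsilon^{-1}\cE^{\rm HF}_{0,\kappa_n}(\tilde\gamma_{\kappa_n})+\epsilon^{-1}\Tr(B_{\epsilon m}\tilde\gamma_{\kappa_n})$ is just a repackaging of the paper's display (4.12)), the same trial-state upper bound built from a rescaled $d_N^*$-optimizer, the same Fatou/AM--GM squeeze to force $\Tr((-\Delta)^{-1/2}\gamma^*)=d_N^*$ and extract the rate, and the same reliance on adapting the uniform decay estimate from Step 3 of Lemma \ref{lem3.1a} to the rescaled massive mean-field equation (which you correctly flag as the main technical burden). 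The one genuinely different sub-argument is your proof that $\epsilon_{\kappa_n}\to 0$: the paper rules out bounded $\Tr(\sqrt{-\Delta}\gamma_{\kappa_n})$ by running the concentration-compactness machinery of Lenzmann--Lewin to manufacture a minimizer of $I^{\rm HF}_{m,\kappa_N^{\rm HF}}(N)$, contradicting Lemma \ref{nomin}, whereas you rule out $\epsilon_{\kappa_n}\to\epsilon_0\in(0,\infty)$ by a purely quantitative Jensen-inequality bound $\Tr(B_{\epsilon_0 m/2}\tilde\gamma_{\kappa_n})\gtrsim N^2/(N+1)$, which contradicts the vanishing of the bracket. Your argument is more elementary and avoids a second invocation of the compactness theorem at this stage; the paper's is shorter to state because the machinery is already in place. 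Two minor remarks: for the final lower bound a Fatou-type $\liminf$ of $\epsilon^{-2}\Tr(B_{\epsilon m}\tilde\gamma_{\kappa_n})$ suffices, so you do not actually need the full convergence of $\Tr((-\Delta)^{-1/2}\tilde\gamma_{\kappa_n})$ that you work to establish; and the strong $H^{1/2}$-convergence in \eqref{3.0} needs the upgrade from weak to strong carried out in Step 2 of Lemma \ref{lem3.1a} (Pohozaev identity plus saturation of the Gagliardo--Nirenberg quotient), not just the no-vanishing argument of Lemma \ref{than}, so your citation there should be sharpened.
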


\begin{proof} 
 Lemma \ref{lem3.1} is proved by the following two steps.
 
\medskip
\noindent	
{\bf Step 1.} 
We prove that both (\ref{3.0}) and \eqref{4.6} hold true. For this purpose, since $\kappa_n<\kappa_N^{\rm HF}$, we first observe that
$I^{\rm HF}_{m,\kappa^{\rm HF}_N}(N)\le I^{\rm HF}_{m,\kappa_{n}}(N)$. On the other hand, we have for any $\gamma\in\cP_{N}$ given by \eqref{set},
\begin{equation} \label{52.M}
	\begin{split}
		\cE^{\rm HF}_{m,\kappa^{\rm HF}_N}(\gamma)
		&=\cE^{\rm HF}_{m,\kappa_{n}}(\gamma)-\frac{\kappa^{\rm HF}_N-\kappa_{n}}{2} \big(D(\rho_\gamma,\rho_\gamma)-\Ex(\gamma)\big) \\
		&\geq I^{\rm HF}_{m,\kappa_{n}}(N)-\frac{\kappa^{\rm HF}_N-\kappa_{n}}{2} \big(D(\rho_\gamma,\rho_\gamma)-\Ex(\gamma)\big).
\end{split}\end{equation}
Optimizing (\ref{52.M}) over $\gamma$,  we then obtain that
\begin{align}\label{3.1}
	I^{\rm HF}_{m,\kappa^{\rm HF}_N}(N) = \lim\limits_{n\to\infty}I^{\rm HF}_{m,\kappa_{n}}(N)=\lim\limits_{n\to\infty}\cE^{\rm HF}_{m,\kappa_{n}}(\gamma_{\kappa_{n}})\in(-\infty, 0).
\end{align}

On the contrary, we now suppose that  the limit \eqref{4.6} is false. Then $\Tr\big(\sqrt{-\Delta}\, \gamma_{\kappa_{n}}\big)$ is bounded uniformly in $n$. This gives that
\begin{equation}\label{4.12a}
	\begin{split}
		0&=\lim\limits_{n\to\infty}\frac{\kappa^{\rm HF}_N-\kappa_{n}}{\kappa^{\rm HF}_N}\Tr\big(\sqrt{-\Delta}\, \gamma_{\kappa_{n}}\big)\\
		&\geq\lim\limits_{n\to\infty}\frac{\kappa^{\rm HF}_N-\kappa_{n}}{2}  \big(D(\rho_{\gamma_{\kappa_n}},\rho_{\gamma_{\kappa_n}})-\Ex(\gamma_{\kappa_n})\big) \geq 0,
	\end{split}
\end{equation}
and hence
$$\lim\limits_{n\to\infty}\cE^{\rm HF}_{m,\kappa_{n}}(\gamma_{\kappa_{n}})
=\lim\limits_{n\to\infty}\cE^{\rm HF}_{m,\kappa^{\rm HF}_N}(\gamma_{\kappa_{n}}).$$
We then derive from \eqref{3.1} that
\begin{align*}
	0>I^{\rm HF}_{m,\kappa^{\rm HF}_N}(N)
	=\lim\limits_{n\to\infty}\cE^{\rm HF}_{m,\kappa_{n}}(\gamma_{\kappa_{n}})
	=\lim\limits_{n\to\infty}\cE^{\rm HF}_{m,\kappa^{\rm HF}_N}(\gamma_{\kappa_{n}})\geq I^{\rm HF}_{m,\kappa^{\rm HF}_N}(N),
\end{align*}
which further yields that
\begin{align}\label{4.9}
	0>I^{\rm HF}_{m,\kappa^{\rm HF}_N}(N)=\lim\limits_{n\to\infty}\cE^{\rm HF}_{m,\kappa^{\rm HF}_N}(\gamma_{\kappa_{n}}).
\end{align}
Using the unform boundedness of $\Tr(\sqrt{-\Delta}\gamma_{\kappa_n})$, together with \eqref{4.9}, the  same argument of \cite[Theorem 4]{LenLew-10} then gives that up to translations and a subsequence if necessary,
$$
u_{j}^{\kappa_{n}}\to u_{j}\ \ \text{strongly\ in}\, \ H^{1/2}(\R^3)\ \ \text{as}\ \ n\to\infty,\ \ j=1, \cdots, N,
$$
and $\sum_{j=1}^{N}|u_{j}\rangle\langle u_{j}|$ is a minimizer  of  $I^{\rm HF}_{m,\kappa^{\rm HF}_N}(N)$. This however contradicts with Lemma \ref{nomin}. Thus, \eqref{4.6} holds true.

By the definition of $\epsilon_{\kappa_{n}}>0$ in \eqref{4.6}, we deduce from \eqref{3.1} and \eqref{eq:GN-HF} that
\begin{align}\label{4.7}
	1&=  \lim\limits_{n\to\infty}\epsilon_{\kappa_{n}} \Tr\big(\sqrt{-\Delta}\, \gamma_{\kappa_{n}} )\nonumber\\
	&=  \lim\limits_{n\to\infty}\epsilon_{\kappa_{n}}\Big(-I^{\rm HF}_{m,\kappa_{n}}(N)-mN+\Tr\big(\sqrt{-\Delta+m^{2}}\, \gamma_{\kappa_{n}}\big)\Big) \\
	&=\lim\limits_{n\to\infty}\epsilon_{\kappa_{n}}\frac{\kappa^{\rm HF}_N}{2}\int_{\R^{3}}\int_{\R^{3}}\frac{\rho_{\gamma_{\kappa_{n}}}(x)\rho_{\gamma_{\kappa_{n}}}
		(y)-|\gamma_{\kappa_{n}}(x,y)|^{2}}{|x-y|}dxdy.\nonumber
\end{align}
The similar proof of Lemma \ref{lem3.1a} then yields that the $H^{1/2}$-convergence  \eqref{3.0}  holds true, and 
\begin{equation}\label{4.14}
	\sum_{j=1}^{N}|w_{j}^{\kappa_{n}}(x)|\leq C\big(1+|x|\big)^{-4}\  \,\text{ uniformly in}\ \, \R^3 \  \,\text{for sufficiently large}\ \, n>0,
\end{equation}
where $w_{j}^{\kappa_{n}}$ is as in \eqref{3.0}.  By the definitions of $w_{j}^{\kappa_{n}}$ and $w_{j}$,  we can check from  \eqref{3.0}  and  \eqref{4.7} that  $\gamma^{*}:=\sum_{j=1}^{N}|w_{j}\rangle\langle w_{j}|$ is an optimizer of $\kappa_N^{\rm HF}$, and
\begin{equation*}\label{4.16}
	\Tr\big(\sqrt{-\Delta}\, \gamma^{*}\big)=\lim\limits_{n\to\infty}\epsilon_{\kappa_{n}}\Tr\big(\sqrt{-\Delta}\, \gamma_{\kappa_{n}}\big)=1.
\end{equation*}
This proves Step 1.

\medskip
\noindent	
{\bf Step 2.} We prove that  $\gamma^*$ is an optimizer of $d_N^*$ defined in \eqref{dn} and  the energy estimate (\ref{52.MN}) holds true. Essentially, it yields from \eqref{decay}, \eqref{3.0} and \eqref{4.14} that
\begin{align*} &\lim\limits_{n\to\infty}\int_{\R^{3}}\int_{\R^{3}}|x-y|\Big(\sum_{j=1}^{N}|w_{j}^{\kappa_{n}}(x)|^{2}\sum_{j=1}^{N}|w_{j}^{\kappa_{n}}(y)|^{2} -\Big|\sum_{j=1}^{N}w_{j}^{\kappa_{n}}(x)\overline{w_{j}^{\kappa_{n}}(y)}\ \Big|^{2}\ \Big)dxdy\\ 
	&=\int_{\R^{3}}\int_{\R^{3}}|x-y|\Big(\rho_{\gamma^{*}}(x)\rho_{\gamma^{*}}(y)-|\gamma^{*}(x,y)|^{2}\Big)dxdy<\infty,
\end{align*}
and it follows from \eqref{4.7} that
\begin{align*} \int_{\R^{3}}\int_{\R^{3}}\frac{\rho_{\gamma_{\kappa_{n}}}(x)\rho_{\gamma_{\kappa_{n}}}(y)-|\gamma_{\kappa_{n}}(x,y)|^{2}}{|x-y|}dxdy=\epsilon_{\kappa_{n}}^{-1}\frac{2}{\kappa^{\rm HF}_N}\big(1+o(1)\big) \ \ \text{as}\,\ n\to\infty.
\end{align*}
We then deduce from \eqref{3.0} and \eqref{4.14} that
\begin{align}\label{4.12}
	&I^{\rm HF}_{m,\kappa_{n}}(N)+mN=\cE^{\rm HF}_{m,\kappa_{n}}(\gamma_{\kappa_{n}})+mN\nonumber\\
	&=\Tr\Big[\big(\sqrt{-\Delta+m^{2}}-\sqrt{-\Delta}\, \big)\gamma_{\kappa_{n}}\Big]\nonumber\\
	&\quad\quad+\frac{\kappa^{\rm HF}_N-\kappa_{n}}{2}\int_{\R^{3}}\int_{\R^{3}}\frac{\rho_{\gamma_{\kappa_{n}}}(x)\rho_{\gamma_{\kappa_{n}}}(y)-|\gamma_{\kappa_{n}}(x,y)|^{2}}{|x-y|}dxdy\\
	&\quad\quad+\Tr\big(\sqrt{-\Delta}\, \gamma_{\kappa_{n}}\big)-\frac{\kappa^{\rm HF}_N}{2}\int_{\R^{3}}\int_{\R^{3}}\frac{\rho_{\gamma_{\kappa_{n}}}(x)\rho_{\gamma_{\kappa_{n}}}(y)-|\gamma_{\kappa_{n}}(x,y)|^{2}}{|x-y|}dxdy\nonumber\\
	&\geq\big(1+o(1)\big)\frac{\epsilon_{\kappa_{n}}m^{2}}{2}\Tr\left( (-\Delta)^{-1/2} \gamma^*\right)+\big(1+o(1)\big)\epsilon_{\kappa_{n}}^{-1}\frac{\kappa^{\rm HF}_N-\kappa_{n}}{\kappa^{\rm HF}_N}\nonumber
\end{align}
as $ n\to\infty$. Applying the Cauchy--Schwarz inequality, we thus get from \eqref{4.12} that
\begin{equation}\label{4.17}
\begin{split}
	\lim\limits_{n\to\infty}\frac{I^{\rm HF}_{m,\kappa_{n}}(N)+mN}{\big(\kappa^{\rm HF}_N-\kappa_{n}\big)^{1/2}}
	\geq& m \Big[ \frac{2}{\kappa^{\rm HF}_N} \Tr\big((-\Delta)^{-1/2}\gamma^*\big)\Big]^{1/2} \\
	\ge& m \Big[ \frac{2}{\kappa^{\rm HF}_N} d_N^* \Big]^{1/2},
\end{split}
\end{equation}
where the identities hold if and only if $\gamma^*$ is an optimizer of $d_N^*$, and
\begin{align}\label{4.22c}
	\epsilon_{\kappa_{n}}=\big(1+o(1)\big)m^{-1}\Big[\frac{2( \kappa^{\rm HF}_N-\kappa_{n})}{ \kappa^{\rm HF}_N d_N^*}\Big]^{1/2}\ \ \text{as}\ \ n\to\infty.
\end{align}

We next prove the  upper bound of \eqref{4.17}. Let $\gamma$ be an optimizer of $d_N^*$. One can verify  that for any $t_{n}\in\R^+$, the rescaling operator $\gamma_{t_{n}}(x, y):=t_{n}^3\gamma(t_{n}x, t_{n}y)$ satisfies \begin{align}\label{4.22a}
	D(\rho_{\gamma_{t_{n}} },\rho_{\gamma_{t_{n}} }) -\Ex(\gamma_{t_{n}} )
	= t_{n} \big( D(\rho_{\gamma },\rho_{\gamma}) -\Ex(\gamma) \big) = \frac{2t_{n}}{\kappa^{\rm HF}_N},
\end{align}
where
the second identity  follows from the fact that $\gamma$ is also an optimizer of $\kappa_N^{\rm HF}$ satisfying $\Tr(\sqrt{-\Delta}\gamma)=1$. Moreover,
\begin{align}\label{4.22aaa}
	\Tr\big[\big(\sqrt{-\Delta+m^{2}}-\sqrt{-\Delta}\, \big) \gamma_{t_{n}}\big] \le \Tr\left( \frac{m^2}{2\sqrt{-\Delta}} \gamma_{t_{n}} \right) = \frac{m^2}{2t_n }d_N^*,\ \ \forall \,n\in \mathbb{N}.
\end{align}
We then deduce from
\eqref{4.22a} and \eqref{4.22aaa} that
\begin{align}\label{4.23a}
	I^{\rm HF}_{m,\kappa_{n}}(N)+mN
	\leq&\cE^{\rm HF}_{m,\kappa_{n}}\big(\gamma_{t_{n}}\big) + mN\nonumber\\
	=&\Tr\big[\big(\sqrt{-\Delta+m^{2}}-\sqrt{-\Delta}\, \big) \gamma_{t_{n}}\big]\nn\\
	&+\frac{\kappa^{\rm HF}_N-\kappa_{n}}{2} \big( D(\rho_{\gamma_{t_{n}} },\rho_{\gamma_{t_{n}} }) -\Ex(\gamma_{t_{n}} ) \big) \\
	\leq& \frac{m^{2}}{2t_n} d_N^*+t_{n}\frac{\kappa^{\rm HF}_N-\kappa_{n}}{\kappa^{\rm HF}_N},\ \ \forall\, n\in \mathbb{N}.\nonumber
\end{align}
Choosing
$$
t_{n}=m\Big(\frac{\kappa^{\rm HF}_N d_N^*}{2( \kappa^{\rm HF}_N-\kappa_{n})}\Big)^{1/2}>0,
$$
we thus obtain  from \eqref{4.23a} that
\begin{align}\label{4.19}
	\lim\limits_{n\to\infty}\frac{I^{\rm HF}_{m,\kappa_{n}}(N)+mN}{\big(\kappa^{\rm HF}_N-\kappa_{n}\big)^{1/2}}
	\leq m \Big[ \frac{2}{\kappa^{\rm HF}_N} d_N^* \Big]^{1/2},
\end{align}
which therefore gives the upper bound of \eqref{4.17}.

We now conclude from \eqref{4.17} and \eqref{4.19} that
\begin{align*}
	\lim\limits_{n\to\infty}\frac{I^{\rm HF}_{m,\kappa_{n}}(N)+mN}{\big(\kappa^{\rm HF}_N-\kappa_{n}\big)^{1/2}}
	=m \Big[ \frac{2}{\kappa^{\rm HF}_N} d_N^* \Big]^{1/2},
\end{align*}
which further yields that $\gamma^*$ is an optimizer of $d_N^*$, and  \eqref{4.22c} holds true. This proves the energy estimate (\ref{52.MN}), and the proof of Lemma \ref{lem3.1} is therefore complete.
\end{proof}

\linespread{0}
We are finally ready to complete the proof of Theorem \ref{th2}.

\begin{proof}[Proof of Theorem \ref{th2}.]
Let $w_{j}^{\kappa_{n}}$ and $ w_{j}$ be given by Lemma \ref{lem3.1}. In view of Lemma \ref{lem3.1}, the rest is to prove the $L^\infty$-convergence of the sequence $\{w_{j}^{\kappa_{n}}\}_{n\geq 1}$ as $n\to\infty$, $j=1, \cdots, N$.

Denote $\gamma^{*}_{\kappa_{n}}:=\sum_{j=1}^{N}|w_{j}^{\kappa_{n}}\rangle\langle w_{j}^{\kappa_{n}}|$. Similar to  \eqref{3.49},  it yields from \eqref{equation} and Lemma \ref{lem3.1} that for  $j=1, \cdots, N$,
\begin{equation}\label{4.21}
	\begin{split}
		&\big(\sqrt{-\Delta+m^{2}\epsilon_{\kappa_{n}}^{2}}-m\epsilon_{\kappa_{n}}-\epsilon_{\kappa_{n}} \nu_j^{\kappa_{n}}\big)w_{j}^{\kappa_{n}}\\
		&=\, \kappa^{\rm HF}_N\int_{\R^{3}}\frac{\rho_{\gamma^{*}_{\kappa_{n}}}(y)}{|x-y|}dy\, w_{j}^{\kappa_{n}}
		-\kappa^{\rm HF}_N\int_{\R^{3}}\frac{\gamma^{*}_{\kappa_{n}}(x,y)}{|x-y|}w_{j}^{\kappa_{n}}(y)dy =: g_{j}^{\kappa_{n}} \ \ \mbox{in} \ \, \R^3,
	\end{split}
\end{equation}
where $\epsilon_{\kappa_{n}}>0$ is as in \eqref{4.6} and satisfies $ \epsilon_{\kappa_{n}}\to0$ as $n\to\infty$, and the constant $\nu_j^{\kappa_{n}}$ satisfies 
\begin{equation}\label{4.23}
	-\infty<\lim\limits_{n\to\infty}\epsilon_{\kappa_{n}} \nu_j^{\kappa_{n}}<0,\ \ j=1,\cdots, N.
\end{equation}

The same argument of \eqref{uj-H1bdd} gives that the sequence $\big\{w_{j}^{\kappa_n}\big\}_{n\geq 1}$ is bounded uniformly in $H^1(\R^3)$ for $ j=1, \cdots,N$. Using the Hardy--Kato inequality \eqref{eq:HK} and the standard Hardy inequality $|x|^{-2}\le 4(-\Delta)$ in $\R^3$, we derive that for $j=1,\cdots, N$,
\begin{align}\label{4.21a} &\Big\|\nabla\Big[\Big(|x|^{-1}\ast\big(w_{j}^{\kappa_{n}}w_{k}^{\kappa_{n}}\big)\Big)w_{l}^{\kappa_{n}}\Big]\Big\|_{2}\nonumber\\[1mm]
\leq&2\Big\|\Big(|x|^{-1}\ast\big(w_{j}^{\kappa_{n}}w_{k}^{\kappa_{n}}\big)\Big)\nabla w_{l}^{\kappa_{n}}\Big\|_{2}
	+2\Big\|w_{l}^{\kappa_{n}}\nabla\Big(|x|^{-1}\ast\big(w_{j}^{\kappa_{n}}w_{k}^{\kappa_{n}}\big)\Big)\Big\|_{2}\nonumber\\
\leq&2\Big\||x|^{-1}\ast\big(w_{j}^{\kappa_{n}}w_{k}^{\kappa_{n}}\big)\Big\|_{\infty}\|\nabla w_{l}^{\kappa_{n}}\|_{2} +2\Big\|\nabla\Big(|x|^{-1}\ast\big(w_{j}^{\kappa_{n}}w_{k}^{\kappa_{n}}\big)\Big)\Big\|_{\infty}\|w_{l}^{\kappa_{n}}\|_{2}\nn\\[1mm]
\leq& 2\Big\||x|^{-1}\ast\big(w_{j}^{\kappa_{n}}w_{k}^{\kappa_{n}}\big)\Big\|_{\infty}\|\nabla w_{l}^{\kappa_{n}}\|_{2}+6\|w_{l}^{\kappa_{n}}\|_{2}\Big\||x|^{-2}\ast\big(w_{j}^{\kappa_{n}}w_{k}^{\kappa_{n}}\big)\Big\|_{\infty}\\[1mm]
\leq&2\pi\| w_{l}^{\kappa_{n}}\|_{H^1}\big\langle w_{j}^{\kappa_{n}}, \sqrt{-\Delta} w_{j}^{\kappa_{n}}\big\rangle^{\frac{1}{2}}\, \big\langle w_{k}^{\kappa_{n}}, \sqrt{-\Delta} w_{k}^{\kappa_{n}}\big\rangle^{\frac{1}{2}}\nn\\
& +24\|w_{l}^{\kappa_{n}}\|_{H^1}\big\langle w_{j}^{\kappa_{n}}, -\Delta w_{j}^{\kappa_{n}}\big\rangle^{\frac{1}{2}}\, \big\langle w_{k}^{\kappa_{n}}, -\Delta w_{k}^{\kappa_{n}}\big\rangle^{\frac{1}{2}}\le C \nonumber
\end{align}
uniformly for sufficiently large $n>0$. This further implies the following $H^1$-uniform estimate
\begin{align}\label{4.1}
	\limsup\limits_{n\to\infty}\int_{\R^3}|\nabla g_{j}^{\kappa_{n}}|^{2}dx<\infty,\ \ j=1, \cdots, N.
\end{align}
Note from \eqref{4.21} and \eqref{4.23} that
\begin{align*}
	&\limsup\limits_{n\to\infty}\int_{\R^3}|\xi|^4 \, \big|\big(w_j^{\kappa_n}\big)^\wedge(\xi)\big|^2d\xi\\ &=\limsup\limits_{n\to\infty}\int_{\R^3}\frac{|\xi|^4\ \big|(g_j^{\kappa_n})^\wedge(\xi)\big|^2}{\big[\sqrt{|\xi|^2+m^2\epsilon_{\kappa_n}^2}-\big(\epsilon_{\kappa_n}m+\epsilon_{\kappa_n}\nu_j^{\kappa_n}\big)\big]^2}d\xi\\[1.5mm]
	&\leq\limsup\limits_{n\to\infty}\int_{\R^3}|\xi|^2\big|(g_j^{\kappa_n})^\wedge(\xi)\big|^2d\xi\\[1.5mm]
	&=\limsup\limits_{n\to\infty}\int_{\R^3}|\nabla g_j^{\kappa_n}|^2dx<\infty,\ \ j=1,\cdots,N,
\end{align*}
where the last inequality follows from  \eqref{4.1}.
Thus, $w_{j}^{\kappa_{n}}$ is bounded uniformly in $H^2(\R^3)$ as $n\to \infty$. Moreover, since $w_{j}^{\kappa_{n}}$ satisfies the uniform decay \eqref{4.14} for sufficiently large $n>0$, it yields from Sobolev's embedding theorem that
\begin{equation*}
	w_{j}^{\kappa_{n}}\to w_{j}\ \ \, \text{strongly\ in}\, \ L^\infty(\R^3)\ \ \mathrm{as}\ \ n\rightarrow\infty.
\end{equation*}
The proof of Theorem \ref{th2} is therefore complete.
\end{proof}

\bigskip
\noindent{\bf Data availability.} Data sharing not applicable to this article as no datasets were generated or
analyzed during the current study.



\linespread{1}

\bigskip

\end{document}